\documentclass[11pt, reqno]{amsart}

\usepackage{graphicx}
\usepackage{amsmath,amssymb,mathtools,amsopn,amsfonts,amsthm}
\usepackage{bbm}
\usepackage{subfigure}
\usepackage{eepic}
\usepackage{tikz}
\usetikzlibrary{calc,angles,positioning,intersections,quotes}
\usepackage{a4wide}
\usepackage[utf8]{inputenc}
\usepackage{epsfig}
\usepackage{latexsym}
\usepackage[shortlabels]{enumitem}
\usepackage[UKenglish]{babel}
\usepackage{verbatim}
\usepackage{mathrsfs}

\usepackage[initials]{amsrefs}
\usepackage{hyperref}
\usepackage{float}
\usepackage{enumitem}

\numberwithin{equation}{section}
\theoremstyle{plain}

\newtheorem{thm}{Theorem}[section]
\newtheorem{lma}[thm]{Lemma}

\newtheorem{cor}[thm]{Corollary}
\newtheorem{prop}[thm]{Proposition}
\theoremstyle{definition}
\newtheorem{defn}[thm]{Definition}

\newtheorem{conj}[thm]{Conjecture}

\newtheorem{ques}[thm]{Question}

\newtheorem{eg}[thm]{Example}

\def\R{\ensuremath{\mathbb R}}
\def\N{\ensuremath{\mathbb N}}
\def\Q{\ensuremath{\mathcal Q}}

\def\U{\ensuremath{\mathcal U}}
\def\B{\ensuremath{\mathcal B}}

\def\C{\ensuremath{\mathcal C}}
\def\L{\ensuremath{\mathcal L}}

\def\P{\ensuremath{\mathcal P}}

\def\F{\ensuremath{\mathcal F}}

\def\A{\ensuremath{\mathscr A}}

\def\n{\ensuremath{n}}

\def\E{\mathbb E}

\def\sm{\setminus}

\renewcommand{\i}{\mathtt{i}}
\renewcommand{\j}{\mathtt{j}}
\renewcommand{\k}{\mathtt{k}}

\renewcommand{\H}{\mathcal{H}}
\renewcommand{\A}{\mathcal{A}}
\renewcommand{\C}{\mathcal{C}}
\renewcommand{\U}{\mathcal{U}}
\renewcommand{\F}{\mathcal{F}}

\newcommand{\clabel}[1]{\makebox[\labelwidth][c]{#1}}

\newtheoremstyle{nonumberplain}
  {}{}%
  {\itshape}{}%
  {\bfseries}{.}%
  {.5em}{}

\theoremstyle{nonumberplain}

\newtheorem*{assumptions}{Standing assumptions}

\title{Dynamical blanket times}
\date{\today}

\subjclass[2020]{37A25, 37E05, 37C30, 37D35} 
\keywords{Cover time, Blanket time, Large deviations, Hitting time}

\author[N. Jurga]{Natalia Jurga}\address{Natalia Jurga\\ Mathematical Institute\\
University of St Andrews\\
North Haugh\\
St Andrews\\
KY16 9SS\\
Scotland \\}
\email{naj1@st-andrews.ac.uk}
\urladdr{https://www.nataliajurga.co.uk/}

\author[M. Todd]{Mike Todd}
\address{Mike Todd\\ Mathematical Institute\\
University of St Andrews\\
North Haugh\\
St Andrews\\
KY16 9SS\\
Scotland \\} \email{{m.todd@st-andrews.ac.uk }}
\urladdr{https://mtoddm.github.io/}

\thanks{NJ was partially supported by a Leverhulme Early Career Fellowship (ECF-2021-385). MT was partially supported by the FCT (Funda\c{c}ão para a Ciência e a Tecnologia) 
project 2022.07167.PTDC and by his EPSRC grant UKRI1120. Thanks to Stephen Cantrell for reading an early draft of this paper and for useful discussions around the proof of Lemma \ref{lem:weakreg}. }

\begin{document}

 \maketitle
\begin{abstract}
We introduce \emph{dynamical blanket times}, which quantify how quickly the empirical distribution along a typical orbit approximates the invariant measure. These can be viewed as a measure-theoretic analogue of the previously introduced \emph{dynamical cover time}, which measures how quickly an orbit becomes dense in the space. Motivated by analogous comparability questions for random walks on graphs, we investigate how much longer it takes for a dynamical system to ``blanket'' than to ``cover''.

For finite-branch, uniformly expanding interval maps, we obtain upper bounds on the expected blanket time in terms of the spatial scale and the precision of approximation. In the special case where the invariant measure is absolutely continuous with respect to Lebesgue, this yields comparability between the expected blanket and cover times, uniformly across all sufficiently small scales.

Our approach combines two main ingredients. First, we establish large deviation estimates for hitting times which are uniform over both the target location and the spatial scale. Second, using methods from multifractal analysis, we construct a finite discretisation of the invariant measure which reduces the problem to a suitably controlled discrete model.

\end{abstract}

\section{Introduction}

Let $f:X \to X$ be a map on a metric space $X$, and let $\mu$ be a fully supported Borel probability measure on $X$. 
If the dynamical system $(f,X,\mu)$ is ergodic, then ``most'' of its orbits exhibit two fundamental properties: for $\mu$-almost every $x \in X$,
\begin{itemize}[
    labelwidth=3.2cm,   
    labelsep=0.5cm,     
    leftmargin=3.7cm    
]
  \item[\clabel{(covering)}]
    the orbit $\{f^n(x)\}_{n\in\mathbb{N}}$ is dense in $X$ and
  \item[\clabel{(equidistribution)}]
    the sequence of empirical measures
    $\left\{\frac{1}{n}\sum_{i=0}^{n-1}\delta_{f^i(x)}\right\}$
    converges weakly to $\mu$.
\end{itemize}

We refer to these as \emph{local statistical properties}, in the sense that they concern the behaviour of a single typical orbit. This contrasts with \emph{global statistical properties}, such as decay of correlations, limit theorems, or spectral gaps for transfer operators, which describe averaged or spectral features of the system as a whole. Global statistical properties have been studied intensively for several decades; by comparison, quantitative questions about \emph{local} orbit-level behaviour remain far less developed. One of our aims is to highlight the richness of this local viewpoint.

In this paper we are interested in \emph{quantifying the rate} at which covering and equidistribution occur, and in comparing these two rates. Given a single orbit, we ask:

\begin{itemize}
    \item How long does it take for the orbit to become $\delta$-dense in the space?
    \item How long does it take before the empirical distribution along the orbit is close to $\mu$, at the same spatial scale $\delta$?
\end{itemize}

To make these questions precise we introduce two \emph{stopping times} that record how a single orbit explores the space. Stopping times are a natural tool for orbit-level phenomena in dynamics: the classical example is the hitting or return time to a set, which plays a central role in shrinking target problems, Diophantine approximation, and inducing schemes for non-uniformly hyperbolic systems. Here, we introduce stopping times tailored to covering and equidistribution.

For $\delta \in (0,1)$ we define the \emph{cover time at scale $\delta$},
\[
  \C_\delta: X \to \mathbb{N},
\]
by
\[
  \C_\delta(x):=\inf\bigl\{n \ge 1: 
      \{x,f(x), \ldots , f^{n-1}(x)\} \text{ is $\delta$-dense in $X$}
  \bigr\}.
\]
For $\epsilon\in (0,1)$ we define the \emph{$\epsilon$-approximate blanket time at scale $\delta$},
\[
  \B_\delta^{1-\epsilon}: X \to \mathbb{N},
\]
by
\[
  \B_\delta^{1-\epsilon}(x):=\min\left\{N \ge 1: 
      \forall y \in \mathrm{supp}\,\mu,\ 
      \frac{1}{N} \sum_{i=0}^{N-1} 
        \mathbf{1}_{B(y,\delta)}(f^i x) 
      \geq (1-\epsilon)\,\mu\bigl(B(y,\delta)\bigr)
  \right\}.
\]

The cover time measures how quickly a typical orbit fills the space at scale $\delta$. The blanket time measures how quickly the empirical distribution along a typical orbit approximates the invariant measure $\mu$ at the same scale. Both are measurable functions on $(X,\mu)$, and the quantities of interest are their expectations and their asymptotic behaviour as $\delta\to0$ and $\epsilon\to0$.

Clearly an orbit must first be $\delta$-dense before it can have any hope of being $(1-\epsilon)$-blanketing at scale $\delta$, so we always have
\[
  \mathbb{E}_\mu\C_\delta \;\leq\;
  \mathbb{E}_\mu \B_\delta^{1-\epsilon}.
\]
Our main question is: \emph{how much longer} do we expect it to take to blanket than to cover?

After a brief discussion of the probabilistic background, we formulate this more precisely.

\subsection{Historical context and motivation}

Orbit-level stopping time questions of this type have been studied extensively in probability theory and theoretical computer science, but comparatively little in ergodic theory. The main exception is the case of hitting and return times, which feature prominently in shrinking target problems, Diophantine approximation, and in constructing inducing schemes for slowly mixing systems. By contrast, quantitative questions such as cover and blanket times -- which are central in the probabilistic literature -- have barely been explored for deterministic dynamical systems (see, however, results in the line of \cite{DehTaq89} for related work considering the convergence of the cumulative distribution function of empirical measures). This illustrates a broader phenomenon: while global statistical properties in ergodic theory have been deeply understood and refined, the local, orbit-by-orbit behaviour remains far less developed.

The cover time was introduced only recently in the dynamical setting in \cite{JurTod25}, motivated in part by analogous notions in the ``chaos game'' and random walks on graphs. To the best of our knowledge, the blanket time has not previously been studied in a deterministic dynamical context. One can view $\B_{\delta}^{1-\epsilon}$ as a quantitative, orbit-level measure of equidistribution, complementing the recent surge of results on \emph{effective} (i.e.\ global, spectral) equidistribution.

Our work is particularly inspired by results on random walks on graphs, where the distinction between local and global statistical properties is well developed. For a simple random walk on a finite graph, global properties such as mixing times and spectral gaps are controlled by the spectrum of the transition matrix, while local quantities such as hitting, cover, and blanket times are much more sensitive to geometric features: bottlenecks, long sparse paths, and other ``rare'' structures. Proving sharp estimates for these local times typically requires combinatorial or geometric insights rather than purely spectral ones.

The theory of global statistical properties in ergodic theory grew directly out of the probabilistic viewpoint (via transfer operators and Markov approximations). One of our motivations here is to push this analogy further on the \emph{local} side: to understand which aspects of cover and blanket times for random walks have meaningful analogues for deterministic dynamical systems, and how the techniques must change.

A central motivation comes from the theory of simple random walks on finite graphs and the celebrated theorem of Ding, Lee and Peres \cite{DLP}. In the context of simple random walks on graphs, the cover and $(1-\epsilon)$-approximate blanket times can be defined analogously \S\ref{heuristics}. In \cite{WinZuc96}, Winkler and Zuckerman made the following conjecture, which was later proved in \cite{DLP}.

\begin{conj}\label{wz}
For each $\epsilon\in (0,1)$ there is a constant $K_\epsilon<\infty$ such that for every finite graph $G$ with set of vertices $V$
\[
\max_{v \in V}\mathbb{E}_v((1-\epsilon)\text{-approximate blanket time of $G$})
  \;\le\; K_\epsilon \cdot 
  \max_{v \in V}\mathbb{E}_v(\text{cover time of $G$}).
\]
\end{conj}

Informally: once the random walk has visited every vertex of the graph, one only has to wait a further factor $K_\epsilon$ (depending on $\epsilon$ but \emph{not} on the graph) to see an approximately stationary number of visits to each vertex.

An analogous question in our dynamical setting is:

\begin{ques}\label{metaques}
For which dynamical systems $(f,X,\mu)$ does there exist, for each $\epsilon\in (0,1)$, a constant $K_\epsilon<\infty$ (depending on $\epsilon$ but \emph{not} on the scale) such that for all sufficiently small scales $\delta \in (0,1)$,
\[
  \mathbb{E}_\mu \C_\delta 
  \;\leq\; \mathbb{E}_\mu \B_\delta^{1-\epsilon}
  \;\leq\; K_\epsilon\,\mathbb{E}_\mu \C_\delta,
\]
and how does $K_\epsilon$ depend on $\epsilon$?
\end{ques}

Whereas in the graph setting the significance of the Ding–Lee–Peres theorem lies in the fact that $K_\epsilon$ is uniform across all graphs, in our setting the property of interest is whether $K_\epsilon$ remains \emph{independent of the scale} $\delta$.

We emphasise an important conceptual difference between the graph and dynamical settings. In the graph setting, it is natural to fix a starting vertex and study extremal quantities such as $\max_{v \in V} \mathbb{E}_v(\cdot)$, leading to statements which are uniform over all initial conditions and all graphs. 

In contrast, in the dynamical setting there is typically no distinguished family of initial points, and orbit-level behaviour is most naturally studied from a measure-theoretic (i.e.\ typical) perspective. Accordingly, we consider expectations with respect to $\mu$, rather than extremal initial conditions. It is therefore not immediate how directly comparable the two formulations are, and part of the motivation for Question~\ref{metaques} is to understand to what extent an analogue of the graph-theoretic phenomenon persists in this setting.

\textbf{Notation:} For sequences $(a_n)_n$ and $(b_n)_n$ we write $a_n\lesssim b_n$ or $a_n\gtrsim b_n$ if there exists some $C>0$ such that $a_n\le Cb_n$ for all $n$, or $a_n\ge Cb_n$ for all $n$, respectively.  We write $a_n\asymp b_n$ if $a_n\lesssim b_n$ and $a_n\gtrsim b_n$: however if we want to emphasise constants we may also write $a_n= C^\pm b_n$ to mean $b_n/C\le a_n\le Cb_n$.  We sometimes write $a_n= O(b_n)$ if $|a_n|\lesssim |b_n|$.

\subsection{Our family of systems and results}\label{maps}

To investigate Question~\ref{metaques}, we consider the simplest family of systems that admits genuinely diverse multifractal behaviour. Since the multifractal structure of $\mu$ plays a key role in orbit-level stopping time problems, the full richness of the theory is not visible for Ahlfors regular measures (a class which, in the present setting, includes all acips).

\begin{assumptions} (For a more complete discussion see Section~\ref{sec:setup}.) We assume $I \subset \R$ is an interval\footnote{Although we do not consider maps whose repeller is a Cantor set, the arguments should extend to that setting with only minor modifications. Additional care would be required in constructing a suitable discretisation of the measure in \S\ref{mfsection}, due to the lack of interval structure.}  $f:I \to \R$ is map with finitely many, piecewise $C^2$ full branches with bounded distortion. We let $\phi:I \to \R$ be a H\"older potential function and we define the \emph{pressure} by 
$$P(\phi):=\lim_{n \to \infty} \frac{1}{n} \log \sum_{f^nx=x} e^{S_n\phi(x)}$$
where $S_n \phi$ denotes the ergodic sum $S_n\phi(x)=\sum_{i=0}^{n-1}\phi(f^i(x))$. We let $\mu=\mu_\phi$ be the unique invariant Gibbs measure for the potential $\phi$ (defined in the next section) and consider the dynamical system $(f,\mu)$.  We will also use the unique normalised $(\phi-P(\phi))$-conformal measure $m= m_\phi$.
\end{assumptions}

For the measure $\mu$, we write $d=d(\mu)$ and $D=D(\mu)$ to denote
\begin{equation}\label{minkowski}
 d:= \lim_{\delta \to 0} \frac{\log \max_{x \in I }\mu(B(x,\delta))}{\log \delta}\;\;\; \textnormal{and} \;\;\; D:= \lim_{\delta \to 0} \frac{\log \min_{x \in I }\mu(B(x,\delta))}{\log \delta}.
\end{equation}
It is known that in our setting the limits exist. Clearly $d \leq D$ and moreover these correspond to the left and right endpoints of the coarse multifractal spectrum, see \S \ref{mfsection} for further details. In particular, $D$ is sometimes referred to as the Minkowski dimension of the measure, see \cite{ffk}.

In \cite{JurTod25} the authors showed that in the current setting $\lim_{\delta \to 0} \frac{\log \mathbb{E}_\mu \C_\delta}{-\log \delta}= D$ and moreover\footnote{We note that \eqref{coverbounds} follows directly from \cite[Theorem 2.2]{JurTod25} combined with the fact that the Gibbs property implies that the error terms in \cite[Theorem 2.2]{JurTod25} disappear. This is because the quasi-Bernoulli property of Gibbs measures implies that the measure of the ball of
minimum measure at scale $\delta$ resembles the asymptotic limit $O(\delta^D)$ already at large scales $\delta$ (see for instance Lemma \ref{lem:weakreg}).} that
\begin{equation} \label{coverbounds}\delta^{-D} \lesssim\mathbb{E}_\mu \C_\delta \lesssim \delta^{-D} \log(1/\delta).\end{equation}

In the case where $\mu$ is an acip, the sharp asymptotic was proved that\footnote{This sharp asymptotic is a consequence of the fact that Alfhors regularity of an acip ensures that there are roughly $1/\delta$ balls of minimum measure at scale $\delta$ and this may not be the case more generally, see for example \cite[(1.2)]{chaosgame}.}
\begin{equation}
\mathbb{E}_\mu \C_\delta \asymp \delta^{-D} \log(1/\delta).
\label{covereqn}
\end{equation} 

Since covering is a prerequisite for blanketing, we always have 
$\mathbb{E}_\mu \C_\delta \leq \mathbb{E}_\mu \B_\delta^{1-\epsilon}$.
Thus, obtaining matching upper bounds on the blanket time is the central problem.

In the general multifractal setting, the lack of sharp asymptotics for the cover time prevents a direct comparison between cover and blanket times. Accordingly, our main result provides an upper bound on the blanket time at the correct scale. In the special case of acips, where sharp estimates for the cover time are available, this yields comparability between the two quantities.

\begin{thm} Suppose we have a system $(f,I,\mu)$ which satisfies the standing assumptions. Then
\[
\mathbb{E}_\mu \B_\delta^{1-\epsilon}
  \;\leq\; K_\epsilon\delta^{-D}\log \left(\frac1\delta\right)
\]
where $$K_\epsilon=O\left(\frac{1}{\epsilon^{2+\frac{1}{d}}}\right).$$
\label{thm:main}
\end{thm}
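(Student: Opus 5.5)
We reduce the blanket condition over the continuum of balls $B(y,\delta)$ to a finite family of intervals, then prove a large-deviation lower bound for visit counts to each interval that is uniform in location and scale, and finish with a union bound and tail integration.

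\textbf{Step 1: discretisation.} Cover $I$ by a finite family $\mathcal{J}$ of intervals (cylinder-type pieces), all at scale comparable to $\epsilon\delta$, with the following property. For every $y$, there is a sub-collection $\mathcal{J}_y\subset\mathcal{J}$ of pairwise disjoint intervals contained in $B(y,\delta)$ with
\[
\sum_{J\in\mathcal{J}_y}\mu(J)\ge (1-\epsilon/2)\,\mu(B(y,\delta)).
\]
Then it suffices that every $J\in\mathcal{J}$ receives at least $(1-\epsilon/2)N\mu(J)$ visits among the first $N$ iterates.

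To build $\mathcal{J}$ we use the multifractal structure and the quasi-Bernoulli/Gibbs property. The $\mu$-mass of the boundary layers of width $\epsilon\delta$ near $\partial B(y,\delta)$ is at most $C\epsilon^{d}\mu(B(y,\delta))$, since $\mu$ is doubling with local dimension at least $d$. So we take the pieces at scale $\epsilon^{1/d}\delta$ (up to constants); this is where the exponent $1/d$ in $K_\epsilon$ enters. The number of pieces is $|\mathcal{J}|\lesssim (\epsilon^{1/d}\delta)^{-1}$, and each has $\mu(J)\gtrsim (\epsilon^{1/d}\delta)^{D}$, using the uniform estimate for the minimum ball mass from Lemma~\ref{lem:weakreg}.

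\textbf{Step 2: uniform large deviations for visits.} For a fixed interval $J$, let $V_N(J)=\sum_{i<N}\mathbf 1_J(f^ix)$. We want
\[
\mu\bigl(V_N(J)<(1-\epsilon/2)N\mu(J)\bigr)\le C\exp\bigl(-c\,\epsilon^2 N\mu(J)\bigr),
\]
with $c,C$ uniform in $J$ and in the scale. The approach is to write $V_N$ via successive hitting times to $J$. Using the spectral gap of the transfer operator on BV/H\"older functions, together with bounded distortion, the normalised hitting time $\mu(J)\tau_J$ has an approximately exponential law with an exponential tail. This tail must hold uniformly in $J$, which comes from the uniform rate of the exponential law for cylinders in Gibbs/Markov systems. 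A Chernoff bound on the sum of $k\approx(1-\epsilon/2)N\mu(J)$ approximately i.i.d.\ return times then gives the displayed estimate. The $\epsilon^2$ is the usual quadratic rate for the lower deviation.

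I expect this uniformity to be the main obstacle: short returns for $J$ near periodic points, and the errors in the exponential approximation, must be absorbed into constants that do not depend on $\delta$. I would try a renewal/inducing argument. Replace the first-return structure by hits of a slightly shrunk target with bounded distortion, and use the decay of correlations for observables of bounded variation, whose norms are uniformly bounded by $\mu(J)$.

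\textbf{Step 3: union bound and integration.} Set $N=t\,\epsilon^{-2}\delta^{-D}\epsilon^{-D/d}\log(1/\delta)$ with $t\ge t_0$. Since $N\mu(J)\gtrsim t\,\epsilon^{-2}\log(1/\delta)$ for every $J$, Step 2 and a union bound over $|\mathcal{J}|\lesssim(\epsilon^{1/d}\delta)^{-1}$ intervals give
\[
\mu\bigl(\B_\delta^{1-\epsilon}>N\bigr)\lesssim (\epsilon^{1/d}\delta)^{-1}\delta^{c't}.
\]
This is summable in $t$ once $t_0$ is a large constant. Integrating the tail then yields
\[
\mathbb{E}_\mu\B_\delta^{1-\epsilon}\lesssim \epsilon^{-2}\epsilon^{-D/d}\delta^{-D}\log(1/\delta).
\]
This bound has an extra factor $\epsilon^{-D/d}$. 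To reach the stated $\epsilon^{-2-1/d}$, the discretisation must be refined so that the number of small pieces lost only affects a fraction $\epsilon$ of mass, with piece masses comparable to $\epsilon^{1/d}\mu(B(y,\delta))$ relative to the ball. In other words, the pieces should be chosen adaptively in $\mu$-mass rather than in length, via a multifractal covering argument, so that $\min_J\mu(J)\gtrsim\epsilon^{1/d}\delta^D$.
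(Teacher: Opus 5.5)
\textbf{There is a genuine gap, and it sits in Step~1.}

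You claim that a boundary layer of width $\epsilon\delta$ carries at most $C\epsilon^{d}\mu(B(y,\delta))$ ``since $\mu$ is doubling''. This is false for the non-acip Gibbs measures covered by the theorem, which are in general not doubling. Take $f(x)=2x \bmod 1$ with the $(p,1-p)$-Bernoulli measure, $p>1/2$. Then $\mu([1/2-r,1/2])\asymp r^{D}$ but $\mu([1/2,1/2+r])\asymp r^{d}$, with $d<D$. So for a ball $B(y,\delta)$ with right endpoint $1/2+\epsilon\delta$, almost all of its mass lies in the thin layer $[1/2,1/2+\epsilon\delta]$ once $\delta$ is small. This is exactly the ``spikes'' problem discussed before Proposition~\ref{mainmf}.

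If the pieces are chosen by length, the only uniform control is the absolute one from Lemma~\ref{lem:weakreg}: $\mu(J)\le b|J|^{d}$ against $\mu(B(y,\delta))\ge a\delta^{D}$. This forces $|J|\approx\epsilon^{1/d}\delta^{D/d}$. Because your scheme requires every small piece to be blanketed individually, Step~3 is then governed by $\min_J\mu(J)\approx(\epsilon^{1/d}\delta^{D/d})^{D}$. That gives a bound of order $\delta^{-D^2/d}\log(1/\delta)$, times a power of $\epsilon^{-1}$, instead of $\delta^{-D}\log(1/\delta)$ whenever $d<D$.

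Even in the doubling (acip) case, your own computation stops at $\epsilon^{-2-D/d}$, and the final paragraph only describes what would be needed without constructing it.

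Step~2 is also only heuristic. Return times are not i.i.d., and decay of correlations for $\mathbf{1}_J$ cannot by itself give bounds uniform in $J$, since its BV norm does not shrink with $\mu(J)$. The uniform estimate you need is exactly Proposition~\ref{prop:ChaLep}. The paper proves it by inducing on the target and controlling $q\mapsto P(\phi_A+q\H_A)$ on a disc of radius $\asymp\mu(A)$, and you can simply invoke it for Markov pieces.

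\textbf{How the paper avoids this, and how your idea could be repaired.}

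The paper never blankets small pieces. For each ball it blankets a single Markov interval $A\subset B(y,\delta)$ of length close to $2\delta$ with $\mu(A)\ge(1-\epsilon)\mu(B(y,\delta))$. Hence $\mu(A)\gtrsim\delta^{D}$ always, and the large-deviation cost of each set is only of order $\epsilon^{-2}\delta^{-D}\log(1/\delta)$. The price is the large cardinality of $\A_{\delta,\epsilon}$. This is paid through the bound $\sum_{A}e^{-\mathbf{C}_\delta\mu(A)}=O(\epsilon^{-1/d})$ of Proposition~\ref{mainmf}: sets with $\mu(A)\ge\delta^{D-r}/2$ receive super-polynomially small weight, while the near-minimal region is small because $\F_\mu(D)<1$.

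That said, your closing idea of choosing pieces by mass can be made rigorous quite simply, with no multifractal analysis:
\begin{enumerate}
\item By the Gibbs property, partition $I$ into cylinders $J$ with $c_0\epsilon\delta^{D}\le\mu(J)<\epsilon\delta^{D}$.
\item The two partially covered pieces at the ends of any ball then lose at most $2\epsilon\delta^{D}\le(2/a)\epsilon\mu(B(y,\delta))$, by Lemma~\ref{lem:weakreg}; no doubling is needed.
\item The number of pieces satisfies $\#\mathcal{J}\lesssim\epsilon^{-1}\delta^{-D}$.
\item Proposition~\ref{prop:ChaLep} together with your union bound and tail summation then gives $\mathbb{E}_\mu\B_\delta^{1-C\epsilon}\lesssim\epsilon^{-3}\delta^{-D}\log(1/(\epsilon\delta))$.
\end{enumerate}
This lies within the stated bound, at least for $\delta\le\epsilon$, because $d\le 1$ (some $\delta$-ball always has mass $\gtrsim\delta$), so $\epsilon^{-3}\le\epsilon^{-2-1/d}$. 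As written, however, the proposal does not contain this construction, and its Step~1 is incorrect.
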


When $\mu$ is an acip then $d=D=1$ and the sharp estimate \eqref{covereqn} is known to hold. Therefore we obtain the following corollary to Theorem \ref{thm:main}, which gives an affirmative answer to Question~\ref{metaques} in this case.
\begin{cor} If $\mu$ is an acip then
\[
 \delta^{-1}\log(1/\delta)
  \;\lesssim\; \mathbb{E}_\mu \B_\delta^{1-\epsilon}
  \;\lesssim\; \frac{1}{\epsilon^3}\,\delta^{-1}\log(1/\delta) ,
\]
in particular
$$ \mathbb{E}_\mu \C_\delta\;\lesssim\; \mathbb{E}_\mu \B_\delta^{1-\epsilon}\lesssim \frac{1}{\epsilon^3} \mathbb{E}_\mu \C_\delta .$$
\end{cor}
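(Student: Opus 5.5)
The corollary follows by specialising Theorem~\ref{thm:main} to the acip case and combining it with the known sharp cover-time asymptotic \eqref{covereqn}; essentially nothing new has to be proved. I will carry it out in three steps.

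\emph{Step 1: compute $d$ and $D$.} I will first check that for an acip in our setting $d=D=1$. Under the standing assumptions, an absolutely continuous invariant Gibbs measure is the equilibrium state of the geometric potential $\phi=-\log|f'|$, for which $P(\phi)=0$ and the conformal measure $m$ is Lebesgue. The Gibbs property together with bounded distortion (the usual Rényi/folklore-theorem argument) shows that the density $h=d\mu/d\mathrm{Leb}$ satisfies $C^{-1}\le h\le C$ on $I$. Hence
\[
\mu(B(x,\delta))=C^{\pm}\,\mathrm{Leb}(B(x,\delta)\cap I)\asymp \delta
\]
uniformly in $x\in I$ and in small $\delta$. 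Here I use that $I$ is an interval, so a ball centred in $I$ meets $I$ in a set of length between $\delta$ and $2\delta$. Plugging this into \eqref{minkowski} gives $d=D=1$.

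\emph{Step 2: upper bound.} Substituting $d=D=1$ into Theorem~\ref{thm:main} gives $K_\epsilon=O(\epsilon^{-(2+1)})=O(\epsilon^{-3})$. Therefore
\[
\mathbb{E}_\mu\B_\delta^{1-\epsilon}\lesssim \epsilon^{-3}\,\delta^{-1}\log(1/\delta).
\]

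\emph{Step 3: lower bound.} Every blanketing orbit segment is $\delta$-dense. Indeed, if the empirical measure of $B(y,\delta)$ is at least $(1-\epsilon)\mu(B(y,\delta))>0$ for every $y\in\mathrm{supp}\,\mu=I$, then the segment meets every such ball. Hence $\C_\delta\le\B_\delta^{1-\epsilon}$ pointwise, and so $\mathbb{E}_\mu\C_\delta\le\mathbb{E}_\mu\B_\delta^{1-\epsilon}$. Combined with \eqref{covereqn}, this gives $\delta^{-1}\log(1/\delta)\lesssim\mathbb{E}_\mu\C_\delta\le\mathbb{E}_\mu\B_\delta^{1-\epsilon}$.

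Reading \eqref{covereqn} in the other direction, $\delta^{-1}\log(1/\delta)\lesssim\mathbb{E}_\mu\C_\delta$, so Step 2 can be rewritten as $\mathbb{E}_\mu\B_\delta^{1-\epsilon}\lesssim\epsilon^{-3}\mathbb{E}_\mu\C_\delta$. This is the second displayed chain of the corollary.

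The only step with any content is Step 1, namely the two-sided bound on the density. If one does not want to invoke the folklore theorem, I would instead compare $\mu$ with $m=\mathrm{Leb}$ directly. The Gibbs property gives $\mu([w])\asymp m([w])\asymp|[w]|$ for every cylinder $[w]$. A ball of radius $\delta$ is covered by, and contains, a uniformly bounded number of cylinders of length comparable to $\delta$; this uses bounded distortion and the finitely many full branches. Summing over these cylinders again gives $\mu(B(x,\delta))\asymp\delta$.
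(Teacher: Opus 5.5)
Your proposal is correct and is the paper's own argument: set $d=D=1$ in Theorem~\ref{thm:main} to get $K_\epsilon=O(\epsilon^{-3})$, then combine this with the pointwise inequality $\C_\delta\le\B_\delta^{1-\epsilon}$ and the sharp cover-time asymptotic \eqref{covereqn}. Your Step~1, the two-sided density bound giving $d=D=1$ for an acip, simply fills in a fact the paper takes for granted.
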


\textit{Structure of the paper.} In \S \ref{outline} we will describe the general outline of our proof and state the two key technical Propositions \ref{prop:mftoolintro} and \ref{prop:ChaLep}, which will be required to prove Theorem \ref{thm:main}. In \S \ref{section:properties} we will describe some useful properties that our systems satisfy. In \S \ref{section:mainproof} we will give the proof of Theorem \ref{thm:main}. \S \ref{section:largedev} contains the proof of Proposition \ref{prop:ChaLep} and \S \ref{mfsection} contains the proof of Proposition \ref{prop:mftoolintro}.

\section{Proof outline and key propositions}\label{outline}

In this section we explain the previous proofs in the graph setting and give the two main propositions (Propositions~ and \ref{prop:ChaLep}) needed to adapt this approach in the dynamical case.  We also give Theorem~\ref{thm:easierproof} which is a simpler version of Theorem~\ref{thm:main} and closer in spirit to the graph results.

\subsection{Relation to the graph setting} \label{heuristics}

Let $G$ be a graph with vertex set $V$ and consider a simple random walk on $G$. Let $(\pi_v)_{v\in V}$ denote its stationary distribution. For $v \in V$, the hitting time $H_v$ is the first time that the vertex $v$ is visited by the walk.  The cover time $C_G$ is the first time that each vertex in $V$ is visited by the walk. The $(1-\epsilon)$-approximate blanket time is 
$$B^{1-\epsilon}:=\inf\{t>0: (\forall v) \; N_v(t) \geq t(1-\epsilon)\pi_v\}$$
where $N_v(t)$ denotes the number of times that the walk has visited $v$ up to time $t$. 

In \cite{WinZuc96}, Winkler and Zuckerman posed Conjecture \ref{wz} and proved it for simple random walks which satisfy
\begin{equation} \label{eq:wzineq}
\max_{v \in V} \E_v(C_G)\asymp\max_{v,w \in V} \E_v(H_w) \cdot \log(\# V).
\end{equation}
Since the backbone of our argument is based upon their approach, in order to orient the reader we will now sketch their proof for a simple random walk on a complete graph (such random walks are known to satisfy \eqref{eq:wzineq}). When we pass to the deterministic dynamical setting, this framework breaks down in two essential ways.

\begin{itemize}
\item In the graph case, the stationary distribution has finitely many weights $(\pi_v)_{v\in V}$. In our setting, at scale $\delta$ the role of $\pi_v$ is played by the continuum of local masses $\{\mu(B(x,\delta)) : x\in X\}$ at scale $\delta$, which typically exhibits multifractal behaviour. To estimate the blanket time we must control visit frequencies to all uncountably many balls of radius $\delta$. The core difficulty is therefore geometric rather than probabilistic, arising from the fine structure of the measure. Namely it will be necessary to identify a ``multifractal discretisation'' of our measure: a finite, geometry-dependent family of reference sets whose blanket guarantees global blanketing (blanketing of all $\delta$-balls), and moreover do so in a way which does not introduce scale-dependent error. The dependence of the blanket time on $\epsilon$ is governed by the multifractal spectrum of $\mu$, a phenomenon without a graph-theoretic analogue.
\item We will shortly see that in the graph case, the analysis boils down to studying the concentration properties of the hitting times to the states $v \in V$, and showing that these concentration properties are uniform in $v$.  In the dynamical setting, on the other hand, the main work is in proving large deviation-type estimates on hitting times to a collection of $\delta$-sized intervals, crucially with uniform bounds. The extra dependence in the dynamical setting here means these bounds require more work: note we use general Gibbs measures, which entail dependence on infinitely many time steps, rather than for example Bernoulli measures which only depend on the current time step.
 \end{itemize}

Let us now proceed with estimating $\max_{v \in V} \mathbb{E}_v( B^{1-\epsilon})$ for a simple random walk on a complete graph with $M$ vertices. We have $\#V = M$ and for each $v \in V$, $\pi_v=1/M$. Note that \eqref{eq:wzineq} says $\max_{v \in V} \E_v(C_G) \asymp M \log M$. 

We let  $H_v^n$ denote the $n$th hitting time to $v$. Let $w \in V$. Then
\begin{align*}
\E_w(B^{1-\epsilon})= \sum_{N=1}^\infty \mathbb{P}_w(B^{1-\epsilon} \geq N)& \leq \sum_{N=1}^\infty \mathbb{P}_w((\exists v): H_v^{\lceil N \pi_v(1-\epsilon)\rceil} \geq N) \nonumber\\
& \leq  \sum_{N=1}^\infty  \sum_{v \in V} \mathbb{P}_w( H_v^{\lceil N \pi_v(1-\epsilon)\rceil} \geq N).
\end{align*}
For $N>1$ we can count from when the walk has visited the vertex $v$ for the first time (so that the conditioning on the probabilities matches the vertex whose hitting we are estimating) and then by resumming over a sparser sequence of $N$ we have that the final expression is up to a uniform multiplicative constant less than
\begin{equation} \sum_{N=1}^\infty \sum_{v \in V}\mathbb{P}_v\left( H_v^{N } \geq \frac{N}{\pi_v} (1+\epsilon)\right) \frac{1}{\pi_v}. \label{keymarkov}
\end{equation}
Write $Y_{v}^N$ to be the random variable defined as the sum of $N$ independent copies of
$$Y_v:= H_v-(1+\epsilon)\frac{1}{\pi_v},$$
so that by Chernoff we have
\begin{equation} \label{ldpmarkov}
\mathbb{P}_v\left( H_v^{N } \geq \frac{N}{\pi_v} (1+\epsilon)\right)=\mathbb{P}_v(Y_{v}^N \geq 0) \leq \inf_{t>0} \E_v(e^{tY_{v}^N})=\left(\inf_{t>0} \E_v(e^{tY_v})\right)^N.
\end{equation}
Expanding $\E_v(e^{tY_v})$ as Taylor series and minimising the expansion over $t>0$, one can deduce that there exists $\alpha>0$, which is independent of $M$, such that
$$\mathbb{P}_v\left( H_v^{N } \geq \frac{N}{\pi_v} (1+\epsilon)\right) \leq e^{-\epsilon^2 N \alpha}.$$
Finally putting this into \eqref{keymarkov} gives
$$\E_w(B^{1-\epsilon}) \lesssim \sum_{N=1}^\infty \sum_{v \in V} e^{-\epsilon^2 N \alpha} \frac{1}{\pi_v} \approx \frac{M \log M}{\epsilon^2 \alpha}  \sum_{N=1}^\infty \sum_{v \in V} e^{-N \log M}=  \frac{M \log M}{\epsilon^2 \alpha} \sum_{N=1}^\infty e^{(1-N) \log M},$$
giving the result since $\sum_{N=1}^\infty e^{(1-N) \log M}$ is uniformly bounded over $M \in \N$.

This proves that for a simple random walk on a complete graph,
\begin{equation}\label{eq:markovbound}
\max_{v \in V}\mathbb{E}_v B^{1-\epsilon}\leq K_\epsilon \max_{v \in V} \mathbb{E}_v C_G \;\; \textnormal{where} \; \; K_\epsilon=O(1/\epsilon^2).
\end{equation}
Kolchin, Sevast'yanov and Chistyakov \cite{kolchin} showed that the dependence $K_\epsilon=O(1/\epsilon^2)$ is tight for complete graphs. Ding, Lee and Peres later proved that the same bound holds uniformly for simple random walks on arbitrary graphs, although its optimality in that general setting remains open.

\subsection{The dynamical setting}

In the Markovian blanket time problem, finiteness of the state space plays a decisive simplifying role: one only needs to track frequencies of visits to finitely many sets. We now set up a simplified version of our main result Theorem \ref{thm:main}, which more closely resembles this Markovian result, in that at any scale, the frequencies of visits to only finitely many balls are tracked. In this setting we recover the $1/\epsilon^2$ factor that appeared in the Markov case \eqref{eq:markovbound}.

  For each $\delta \in (0,1)$ let $\A_\delta$ be any minimal $\delta$-cover of $I$ i.e. a family of balls of radius $\delta/2$, each of which is centred in $I$, such that their union covers $I$ and this cannot be achieved by using fewer balls. 

Define the $\epsilon$-approximate $\mathcal{A}_\delta$-restricted blanket time $ \B_{\A_\delta}^{1-\epsilon}$ by
\[
  \B_{\A_\delta}^{1-\epsilon}(x):=\min\left\{N \ge 1: 
      \forall A \in \A_\delta,\ 
      \frac{1}{N} \sum_{i=0}^{N-1} 
        \mathbf{1}_{A}(f^i x) 
      \geq (1-\epsilon)\,\mu\bigl(A\bigr)
  \right\}.
\]
As an example, one may consider the level-$n$ dyadic partition $\A_{1/2^n}$ in which case $\B_{\A_{1/2^n}}^{1-\epsilon}$ records the time that every interval in the level-$n$ dyadic partition has been visited with roughly the correct frequency.  Then the following result is analogous to \eqref{eq:markovbound}.

\begin{thm}\label{thm:easierproof}
Suppose $(f,I,\mu)$ satisfies the same conditions as in the statement of Theorem \ref{thm:main}. Then 
$$\mathbb{E}_\mu \B_{\A_\delta}^{1-\epsilon} \lesssim \frac{1}{\epsilon^2} \frac1{\delta^D}\log\left(\frac1\delta\right).$$
\end{thm}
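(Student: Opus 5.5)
We follow the Winkler--Zuckerman scheme of \S\ref{heuristics}, with the finite family $\A_\delta$ playing the role of the vertex set. Write $M=\#\A_\delta\asymp\delta^{-1}$. By the Gibbs property and the definition of $D$ in \eqref{minkowski} (in the strong form provided by Lemma \ref{lem:weakreg}), $\mu(A)\gtrsim \delta^{D}$ for every $A\in\A_\delta$. For $A\in\A_\delta$ let $H_A^n$ denote the $n$th hitting time of the orbit to $A$. A union bound gives
\[
\E_\mu \B_{\A_\delta}^{1-\epsilon}\le \sum_{N\ge1}\mu\bigl(\B_{\A_\delta}^{1-\epsilon}\ge N\bigr)\le N_0+\sum_{N\ge N_0}\sum_{A\in\A_\delta}\mu\Bigl(H_A^{\lceil N(1-\epsilon)\mu(A)\rceil}\ge N\Bigr),
\]
where we choose the threshold $N_0:=C\epsilon^{-2}\delta^{-D}\log(1/\delta)$ with a large constant $C$. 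It then suffices to show that the double sum is $O(1)$, or at least $O(\epsilon^{-2}\delta^{-D}\log(1/\delta))$.

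The key input is a large deviation estimate for hitting times that is uniform in the target $A$ and in the scale. This is exactly what Proposition \ref{prop:ChaLep} is designed to provide. I would invoke it in the form: there exist $\alpha>0$ and $C'>0$, independent of $\delta$, $A$ and $\epsilon$, such that for $n\ge1$
\[
\mu\Bigl(H_A^{n}\ge \tfrac{n}{\mu(A)}(1+\epsilon)\Bigr)\le C' e^{-\alpha\epsilon^2 n}.
\]
Since $(1-\epsilon)^{-1}\ge 1+\epsilon$, the event $H_A^{\lceil N(1-\epsilon)\mu(A)\rceil}\ge N$ is contained in the event treated by this bound with $n=\lceil N(1-\epsilon)\mu(A)\rceil$, up to the ceiling. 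Each summand is therefore at most $C'e^{-\alpha\epsilon^2(1-\epsilon)N\mu(A)}\le C'e^{-c\epsilon^2 N\delta^D}$. We may assume $\epsilon<1/2$, since larger $\epsilon$ only makes the blanket time smaller.

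Summing first over the $M\lesssim \delta^{-1}$ sets and then over $N\ge N_0$ gives
\[
\sum_{N\ge N_0} \delta^{-1}e^{-c\epsilon^2N\delta^D}\lesssim \delta^{-1}\frac{e^{-c\epsilon^2N_0\delta^D}}{1-e^{-c\epsilon^2\delta^D}}\lesssim \frac{\delta^{-1-D}}{\epsilon^2}\,\delta^{cC}.
\]
This is $O(1)$ once $cC\ge 1+D$. Combined with the threshold term $N_0$, we obtain $\E_\mu\B_{\A_\delta}^{1-\epsilon}\lesssim\epsilon^{-2}\delta^{-D}\log(1/\delta)$, as claimed. The freedom to take $C$ large is what absorbs the polynomial number of sets, exactly as the $\log M$ factor did in the complete-graph computation.

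The main obstacle is the uniform large deviation estimate itself. In the dynamical setting we cannot simply restart at $A$ and apply Chernoff to i.i.d.\ return times, because successive returns are correlated through the Gibbs measure. We also start from $\mu$ rather than from the conditional measure on $A$. My plan would be as follows. First, bound the first hitting time separately using an exponential tail $\mu(H_A>t/\mu(A))\le e^{-ct}$, which is uniform in $A$ by the Gibbs/spectral-gap property. Second, pass to the return-time process under $\mu_A$ via the induced map on $A$, using quasi-Bernoulli/bounded distortion to compare conditional laws after each return up to uniform multiplicative constants. Third, apply Chernoff to a sum of $n$ almost-independent returns whose moment generating function near $t=0$ satisfies $\E e^{t\mu(A)(H_A-1/\mu(A))}\le e^{Kt^2}$ uniformly. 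The uniformity of this quadratic bound in $A$ and $\delta$, presumably via Proposition \ref{prop:ChaLep}, is the crux. Everything else is the bookkeeping above.
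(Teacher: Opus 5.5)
Your overall scheme is the paper's: a union bound over $\A_\delta$, the large deviation bound of Proposition~\ref{prop:ChaLep} with $\eta=\epsilon/\mu(A)$, the lower bound $\mu(A)\gtrsim\delta^D$ from Lemma~\ref{lem:weakreg}, and a cut-off at $N_0\asymp\epsilon^{-2}\delta^{-D}\log(1/\delta)$. The paper instead groups $N$ into blocks of length $\asymp\mathbf{C}_\delta/\epsilon^2$; its $N=0$ block amounts to your trivial bound $N_0$ on the first terms.

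The step that does not go through as written is applying Proposition~\ref{prop:ChaLep} to $A\in\A_\delta$. That proposition is stated only for sufficiently small \emph{Markov} intervals, i.e.\ finite unions of cylinders. Its proof genuinely uses this: the first return map $F_A$, its BIP coding (Proposition~\ref{prop:indprops}), and the partitions $\Q^{A,i}_n$ with $f^{\H_A^n}(Q)=A_i$ all rely on $A=\bigcup_i A_i$ with $A_i\in\P_k$. The elements of $\A_\delta$ are arbitrary balls of radius $\delta/2$, so you are using the key estimate outside its hypotheses.

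The paper closes this gap with an inner Markov approximation, which you should add. For each $A\in\A_\delta$, let $U\subset A$ be the union of all $k$-cylinders contained in $A$. This $U$ is an interval, and $A\setminus U$ lies in at most two $k$-cylinders, whose measures tend to $0$ uniformly as $k\to\infty$. So for $k$ large, $\mu(U)\ge(1-\epsilon)\mu(A)$ and $|U|\gtrsim\delta$.

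At time $\B_{\U_\delta}^{1-\epsilon}(x)$, every $A$ then has empirical frequency at least
\[
(1-\epsilon)\mu(U)\ge(1-\epsilon)^2\mu(A)\ge(1-2\epsilon)\mu(A).
\]
Hence $\B_{\A_\delta}^{1-2\epsilon}\le\B_{\U_\delta}^{1-\epsilon}$ pointwise. Your computation then applies verbatim to $\U_\delta$, since $\#\U_\delta=\#\A_\delta$ and $\mu(U)\gtrsim\delta^D$ by Lemma~\ref{lem:weakreg}. Replacing $\epsilon$ by $\epsilon/2$ gives the theorem.

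Note also that Proposition~\ref{prop:ChaLep} requires $\eta<c_1/\mu(A)$, i.e.\ $\epsilon<c_1$, not just $\epsilon<1/2$. The monotonicity of $\B^{1-\epsilon}$ in $\epsilon$ that you already use handles $\epsilon\ge c_1$. Once this reduction is in place, your final paragraph sketching a separate proof of the large deviation bound is unnecessary, because the proposition can simply be cited.
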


In the remainder of this section we set up the propositions needed to prove our main theorems in the dynamical case.

\subsubsection{Constructing a multifractal discretisation of $\mu$}

The first key challenge in studying the blanket time in the dynamical setting is that by definition, we must keep track of frequencies of visits to a continuum of balls $\{\mu(B(x, \delta))\}$. To circumvent this, for any $\epsilon, \delta \in (0,1)$ we will construct a finite reference set $\mathcal{A}_{\delta, \epsilon}$ whose blanketing guarantees global blanketing. This is done by constructing a finite family of sets with the property that for any arbitrary $\delta$-ball $B$, a set in this collection can be found which is contained in and nearly exhausts the measure of $B$.

\begin{defn} \label{mfd}
An \emph{$(\epsilon, \delta)$-multifractal discretisation of $\mu$} is a collection $\A_{\delta,\epsilon}$ of sets, each of diameter at least $\delta/2$, such that for any $\delta$-ball $B(x,\delta)$, there exists $A \in \A_{\delta, \epsilon}$ such that $A \subset B(x,\delta)$ and
$$\frac{\mu(A)}{\mu(B(x,\delta))} \geq 1-\epsilon.$$
\end{defn}

Note that this discretisation generally will not be a partition (unless, for example, $\epsilon$ is very large and $\mu$ is very regular).

This definition is useful because one can verify that if we $(1-\epsilon)$-blanket an $(\epsilon, \delta)$-multifractal discretisation, then we $(1-2\epsilon)$-blanket globally at scale $\delta$. 

Now, although an $(\epsilon,\delta)$-discretisation is not difficult to construct
(for example, by choosing the start points of the sets in
$\mathcal{A}_{\delta,\epsilon}$ sufficiently dense so that
Definition~\ref{mfd} is satisfied), the cardinality
$\#\mathcal{A}_{\delta,\epsilon}$ directly affects the resulting bound on the
blanket time and therefore must be kept under control.

Indeed, the quantity
\[
\sum_{N=1}^{\infty} \sum_{v \in V} e^{-N \log M}
  = \sum_{v \in V} e^{-\log M}
    \sum_{N=1}^{\infty} e^{-(N-1)\log M}
\]
which appears in the graph setting will be replaced in the dynamical
setting by
\begin{equation}
\sum_{A \in \mathcal{A}_{\delta,\epsilon}}
  e^{-\mathbf{C}_\delta \mu(A)}
  \sum_{N=1}^{\infty} e^{-(N-1)\mathbf{C}_\delta \mu(A)}.
\label{doublesum}
\end{equation}
where $\mathbf{C}_\delta$ will be some constant of the order $\mathbf{C}_\delta  \asymp \delta^{-D}\log(1/\delta)$. 
A crude estimate now gives
\[
\sum_{A \in \mathcal{A}_{\delta,\epsilon}}
  e^{-\mathbf{C}_\delta \mu(A)}
  \le
  \#\mathcal{A}_{\delta,\epsilon}\,\delta.
\]
Thus, proving Theorem~\ref{thm:main} via this bound would require
$\#\mathcal{A}_{\delta,\epsilon} = O_\epsilon(1/\delta)$.

This growth rate for $\# \A_{\delta,\epsilon}$ indeed occurs when $\mu$ is an acip.
However, for more general multifractal measures, the cardinality of a multifractal
discretisation must necessarily grow faster with $\delta$.
Consequently, a more delicate estimate of the sum
\eqref{doublesum} is required.

\begin{prop}\label{prop:mftoolintro}
Letting  $\mathbf{C}_\delta= p\delta^{-D}\log(1/\delta)$ for an appropriate choice of $p$, the following holds.  For all $\epsilon,\delta \in (0,1)$ one can construct an
$(\epsilon,\delta)$-multifractal discretisation of $\mu$ such that
\[
\sum_{A \in \mathcal{A}_{\delta,\epsilon}}
  e^{-\mathbf{C}_\delta \mu(A)}
  = O(\epsilon^{-1/d}).
\]
\end{prop}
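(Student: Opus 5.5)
We construct $\mathcal{A}_{\delta,\epsilon}$ adapted to the local mass of $\mu$, using the Gibbs property and the quasi-Bernoulli (doubling-type) regularity of $\mu$ at all scales. Fix $\delta$. Cover $I$ by a reference family of intervals $J$ of length comparable to $\delta$; a minimal $\delta$-cover with about $1/\delta$ elements will do. On each $J$, consider the $\delta$-balls $B(x,\delta)$ with $x\in J$. We then choose finitely many subintervals of length $\delta$, their endpoints spaced on a grid whose mesh depends on the local measure, so that for every $x\in J$ some grid interval $A\subset B(x,\delta)$ misses at most an $\epsilon$-fraction of $\mu(B(x,\delta))$.

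The first step is a quantitative regularity lemma: for an interval $B$ of length $\delta$ and a subinterval $E\subset B$ of relative length $\eta$ at its boundary, we want
\[
\mu(E)\lesssim \eta^{d}\mu(B).
\]
This should follow from bounded distortion and the Gibbs property. Pull back by the branch of $f^n$ that maps a cylinder of size about $\delta$ to macroscopic size; there the conformal measure $m$ satisfies $m(\text{interval of length }r)\lesssim r^{d}$ uniformly, since $d$ is the lower endpoint of the coarse spectrum in \eqref{minkowski}. An interval near the boundary of $B$ may meet two adjacent cylinders, so we handle it by splitting it into at most two pieces. Consequently a grid of relative mesh $\eta=\epsilon^{1/d}$ inside each $J$ suffices: shifting the ball's endpoints to the nearest grid point inside loses mass at most $C\eta^d\mu(B)\le \epsilon\mu(B)$ after adjusting constants. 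This gives at most $O(\epsilon^{-1/d})$ sets $A$ per $J$, each of diameter at least $\delta/2$ and each carrying mass comparable to the neighbouring $\delta$-balls.

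Next we bound the sum. For $A\subset J$ we have $\mu(A)\gtrsim \mu(J)$, so
\[
\sum_{A}e^{-\mathbf{C}_\delta\mu(A)}\lesssim \epsilon^{-1/d}\sum_{J}e^{-c\,p\,\delta^{-D}\log(1/\delta)\mu(J)}.
\]
It remains to show that $\sum_J e^{-c p\delta^{-D}\log(1/\delta)\mu(J)}=O(1)$ uniformly in $\delta$, given a suitable choice of $p$. Group the $J$ by mass level: let $N_k$ be the number of $J$ with $\mu(J)\in[2^k\delta^{D},2^{k+1}\delta^{D})$ for $k\ge k_0$, where $k_0$ is bounded by the quasi-Bernoulli lower bound $\mu(J)\gtrsim\delta^D$ (cf.\ Lemma \ref{lem:weakreg}). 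Since $N_k\le \#\{J\}\lesssim \delta^{-1}$, the level $k$ contributes at most
\[
\delta^{-1}\exp\bigl(-c p 2^{k}\log(1/\delta)\bigr)=\delta^{\,cp2^k-1}.
\]
Choosing $p$ with $cp2^{k_0}\ge 2$ makes the sum over $k$ geometric and bounded by $O(\delta)$, which is uniformly bounded in $\delta$.

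The main obstacle is the regularity lemma in the first step. It has to hold uniformly in the position of the ball and in $\delta$, with the exponent $d$ exactly, because this exponent is what produces $\epsilon^{-1/d}$. An ordinary Gibbs estimate only gives bounds with a subexponential loss $\delta^{d-o(1)}$, which would be too weak. The plan is to obtain the sharp exponent by submultiplicativity: define $\gamma_n=\max\bigl(\mu(\text{sub-cylinder of depth }n)/\mu(\text{parent cylinder})\bigr)$, note that $\gamma_n$ is submultiplicative, and use Fekete's lemma to get $\gamma_n\le C\lambda^{n}$ with $\lambda$ corresponding to the exponent $d$. Translating cylinder depth into length via bounded distortion then gives the power law.
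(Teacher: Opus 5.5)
\textbf{The gap is your regularity lemma.} The claim $\mu(E)\lesssim\eta^{d}\mu(B)$ for a boundary piece $E\subset B$ of relative length $\eta$ is a doubling-type statement, and Gibbs measures in this setting are in general not doubling. Take the doubling map with the Bernoulli measure giving weight $p>1/2$ to $[0,1/2)$ and $1-p$ to $[1/2,1)$. This comes from a locally constant $\phi$, so it satisfies the standing assumptions, with $d=\log_2(1/p)<D=\log_2(1/(1-p))$. The dyadic interval $[1/2,1/2+2^{-m})$ has mass $(1-p)p^{m-1}$, whereas $[1/2-2^{-n},1/2)$ has mass $p(1-p)^{n-1}$. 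Let $B=[1/2-2^{-n}+2^{-m},\,1/2+2^{-m}]$ and $E=[1/2,1/2+2^{-m}]$, so that $\eta=2^{n-m}$. Then
$\mu(E)/\mu(B\setminus E)\ge\frac{1-p}{p}\,\eta^{d}\bigl(\frac{p}{1-p}\bigr)^{n-1}\asymp\eta^{d}|B|^{-(D-d)}$,
which tends to infinity as $|B|\to0$ with $\eta$ fixed.

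Your pull-back argument does correctly bound $\mu(E)$ by $\eta^{d}$ times the mass of the $\delta$-cylinder containing $E$. But that cylinder can lie almost entirely outside $B$ and carry a factor $\delta^{-(D-d)}$ more mass than $B$. Splitting $E$ into two pieces does not address this. Neither does sharpening the exponent via Fekete; incidentally, an upper bound $\gamma_n\le C\lambda^n$ needs the quasi-supermultiplicativity supplied by quasi-Bernoulli, since submultiplicativity alone only gives $\gamma_n\ge\lambda^n$. Consequently a grid of relative mesh $\epsilon^{1/d}$ per $\delta$-interval is not an $(\epsilon,\delta)$-multifractal discretisation: shift $B$ so that its right endpoint falls just short of a grid point, and the selected grid interval misses essentially all of $E$, hence almost all of $\mu(B)$. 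Your lemma does hold when $\mu$ is an acip, and there your argument is essentially the paper's Proposition~\ref{acipprop}. Your claim $\mu(A)\gtrsim\mu(J)$ for $A\subset J$ is false for the same reason, but it is harmless, since $\mu(A)\ge a|A|^{D}$ from Lemma~\ref{lem:weakreg} is all you actually use.

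\textbf{What is needed instead, and how the paper proceeds.} Without doubling, the only uniform control is the global one from Lemma~\ref{lem:weakreg}: a piece of length $\rho$ has mass at most $b\rho^{d}$, against $\mu(B)\ge a\delta^{D}$. Used naively, this forces an absolute mesh $\rho(\epsilon,\delta)\asymp\epsilon^{1/d}\delta^{D/d}$ and roughly $\epsilon^{-1/d}\delta^{-D/d}$ sets, which is the scale-dependent loss in \eqref{rhoprob}. The paper recovers from this by splitting the discretisation into two parts.
\begin{itemize}
\item $\A^1_{\delta,\epsilon}$ uses the fine mesh only for sets of mass at least $\delta^{D-r}/2$. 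There $e^{-\mathbf{C}_\delta\mu(A)}\le\delta^{\delta^{-r}/2}$, which beats the cardinality.
\item Balls of mass at most $\delta^{D-r}$ lie near the at most $\delta^{-(1+\F_\mu(D))/2}$ cylinders of $\mathscr{P}_{\delta/2}$ of mass below $\delta^{D-r}$; this count uses $\F_\mu(D)<1$ for non-acips. In that region the quasi-Bernoulli property permits the relative mesh $\kappa(\epsilon,\delta)=\epsilon^{1/d}\delta^{r/d}$.
\end{itemize}

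\textbf{A simpler repair of your own argument.} Your device of enlarging $p$ already rescues the statement as written. With the absolute mesh, every $A$ has $|A|\ge\delta$, so $\mu(A)\ge a\delta^{D}$ and
$\sum_A e^{-\mathbf{C}_\delta\mu(A)}\lesssim\epsilon^{-1/d}\delta^{-D/d}\delta^{pa}=O(\epsilon^{-1/d})$ once $pa\ge D/d$.
What the paper's decomposition buys is a $p$ (any $p\ge\max\{1,1/a\}$) that does not depend on $D/d$.
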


The proof of this proposition requires non-trivial arguments from
multifractal analysis, see \S \ref{mfsection}.

\subsubsection{Uniform large deviation estimates for hitting times}
A large proportion of the proof of our main theorems will be devoted to obtaining an analogue of \eqref{ldpmarkov} in the dynamical setting. Namely, given a subinterval $A \subset I$ and letting $\H_A^n$ be the $n$th hitting time to $A$, we will need to obtain estimates on
$$ \mu\left( \H_A^{n } \geq n\left(\frac{1}{\mu(A)}+\eta\right)\right)$$
which is uniform over all $A$ of comparable diameters. Since $\H_A^n$ can be written as an ergodic sum for the first return dynamics, and $\mathbb{E} \H_A^n=n/\mu(A)$, this can be interpreted as a large deviation estimate. Thus our main tool is a large deviations result which is  related to \cite[Theorem 2.2]{ChaLep05} in the setting of Axiom A diffeomorphisms, but here we give an independent, and more direct, proof in order to keep track of the constants. We define Markov intervals in the next section.

\begin{prop}
There exist $c_1,c_2, c_3>0$ such that for each sufficiently small Markov interval $A\subset X$,  for any $\eta<\frac{c_1}{\mu(A)}$,
$$ \mu\left( \H_A^{n } \geq n\left(\frac{1}{\mu(A)}+\eta\right)\right)\le c_3e^{-c_2\eta^2\mu(A)^2n}.$$
\label{prop:ChaLep}
\end{prop}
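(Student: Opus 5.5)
We would prove Proposition~\ref{prop:ChaLep} by an exponential Chebyshev (Chernoff) bound, as in \eqref{ldpmarkov}, with independence replaced by a transfer-operator argument. Write $r_A$ for the first hitting time to $A$. Then $\H_A^n = r_A + \sum_{j=1}^{n-1} r_A\circ F_A^{j}\circ f^{r_A}$, where $F_A=f^{r_A}:A\to A$ is the first return map. For $t>0$ small,
\[
\mu\Bigl(\H_A^n\ge n\bigl(\tfrac1{\mu(A)}+\eta\bigr)\Bigr)\le e^{-tn(1/\mu(A)+\eta)}\,\E_\mu\bigl(e^{t\H_A^n}\bigr).
\]
It then suffices to show that $\E_\mu(e^{t\H_A^n})\le c_3\,\Lambda_A(t)^n$ with
\[
\log\Lambda_A(t)\le \frac{t}{\mu(A)}+C\frac{t^2}{\mu(A)^2}
\]
for $0<t\le c\,\mu(A)$, where $C,c$ do not depend on $A$. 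Choosing $t=\eta\mu(A)^2/(2C)$ then gives the exponent $-c_2\eta^2\mu(A)^2 n$. The constraint $t\le c\mu(A)$ is exactly what forces $\eta<c_1/\mu(A)$.

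\textbf{Step 1: Markov structure of the induced system.} Since $A$ is a Markov interval (a cylinder of some depth $k$), each branch of $F_A$ on $\{r_A=j\}$ is a full branch onto $A$. It is a concatenation of $f^{j}$ through cylinders avoiding $A$ in between. Bounded distortion of $f$ and the Gibbs property of $\mu$ make the normalised conditional measure $\mu_A=\mu|_A/\mu(A)$ a Gibbs measure for a full-branched (countable) induced system, with distortion constants uniform in $A$. Consequently
\[
\E_{\mu_A}\Bigl(e^{t\sum_{j<n} r_A\circ F_A^j}\Bigr)\le K^{n}\prod \text{(one-step terms)},
\]
and more precisely, by quasi-Bernoulli, the moment generating function is multiplicative up to a factor $K^{\pm1}$ per step. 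Since this naive factor $K^n$ would kill the estimate, we instead work with the induced transfer operator $\mathcal{L}_{t}g=\mathcal{L}_{F_A}(e^{tr_A}g)$. Its leading eigenvalue $\Lambda_A(t)$ satisfies $\E_{\mu_A}(e^{t\H^n})\asymp \Lambda_A(t)^n$, uniformly in $A$, because $\mathcal{L}_t$ preserves a cone of log-Hölder functions with uniform contraction, coming from the full branches and bounded distortion. The first block $r_A$ under $\mu$, rather than under $\mu_A$, is handled by Kac's formula together with a similar one-step bound; this contributes only to $c_3$.

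\textbf{Step 2: Expanding $\log\Lambda_A(t)$.} By analytic perturbation, using $\E_{\mu_A}r_A=1/\mu(A)$ (Kac),
\[
\log\Lambda_A(t)=\frac{t}{\mu(A)}+\frac{t^2}{2}\sigma_A^2+O\bigl(t^3\cdot(\text{third moments})\bigr).
\]
We therefore need uniform bounds $\sigma_A^2\lesssim\mu(A)^{-2}$ and higher cumulants $\lesssim\mu(A)^{-m}$ for $t\lesssim\mu(A)$. The key input is the uniform exponential tail
\[
\mu_A(r_A>j)\le C e^{-c\mu(A)j},
\]
valid for sufficiently small Markov intervals. To prove it, we split time into blocks of length $L\asymp k$, where $k$ is the depth of $A$. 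On each block, the Gibbs property and mixing give a conditional probability of hitting $A$ of at least $c\,L\mu(A)$, regardless of the past. The exception is short returns, which are controlled by the fact that a small Markov interval cannot return to itself too quickly except with measure $O(\mu(A))$. Iterating gives the tail bound, and hence $\E_{\mu_A}e^{sr_A}<\infty$ uniformly for $s\le c\mu(A)$. In rescaled form, $\mu(A)r_A$ has uniformly bounded exponential moments. This yields all the cumulant bounds, and the cone contraction gives a uniform radius of analyticity $t\le c\mu(A)$ after rescaling $t=s\mu(A)$.

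\textbf{Main obstacle.} The main difficulty is uniformity in $A$ in Step 1: showing that the spectral gap of the rescaled operator $\mathcal{L}_{s\mu(A)}$ is bounded below independently of $A$. Here the dependence of Gibbs measures on infinitely many time steps matters. I would first try a Birkhoff cone argument for the induced system: full branches give uniform projective diameter for the image cone, and the uniform exponential-tail estimate keeps the perturbation $e^{s\mu(A)r_A}$ uniformly controlled within the cone. If that fails, the fallback is to compare with a Bernoulli-like bound blockwise, absorbing the per-return quasi-Bernoulli constant by grouping returns into blocks of size $m$, with $m$ a fixed large constant. This costs only an $A$-independent change in $c_2$.
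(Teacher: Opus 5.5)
\textbf{Overall.} Your skeleton matches the paper's, but the step that carries the whole proposition is not established, and the argument you give for it does not cover Markov intervals as the paper defines them.

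\textbf{What matches.} The shared ingredients are:
an exponential Chebyshev bound for the first-return system, whose growth rate $\log\Lambda_A(t)$ is the induced pressure $P(\phi_A+t\H_A)$;
the bound $\log\Lambda_A(t)\le \frac{t}{\mu(A)}+C\frac{t^2}{\mu(A)^2}$ on $0\le t\le c\mu(A)$ (this is Corollary~\ref{cor:var});
the choice $t\asymp\eta\mu(A)^2$;
and absorbing the first hitting block into $c_3$ via an escape rate of order $\mu(A)$. The paper imports $\lambda_A<1-C_H\mu(A)$ from \cite{JurTod25}, so that $\mu(A)/(1-\lambda_Ae^q)\le 3/C_H$.

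\textbf{The gap.} It is the step you yourself call the main obstacle: uniformity in $A$ of the radius $\asymp\mu(A)$ and of the bound $\lesssim\mu(A)^{-2}$ on the second derivative. Your justification is that each branch of $F_A$ is full onto $A$, so a Birkhoff cone has image of uniformly bounded diameter. That only applies when $A$ is a single cylinder. In the statement, a Markov interval is a union $A=\bigcup_{i=1}^pA_i$ of $k$-cylinders, and the branches of $F_A$ land on single pieces $A_i$. In the application, the intervals of $\A_{\delta,\epsilon}$ are assembled from a much finer canonical partition, so $p$ is unbounded.

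The projective diameter then depends on how evenly returns redistribute mass among the pieces, and this need not be uniform in $p$. For instance, suppose $A$ is adjacent to the $k$-cylinder containing a fixed point. Short returns then push mass through a chain of roughly $\log p$ pieces before it spreads. So for any fixed $N$, the image of the cone under $N$ iterates has infinite projective diameter once $p$ is large.

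\textbf{The fallback.} It fails for a different reason. Blocks of a fixed number $m$ of returns still cost a factor $K^{n/m}$, which is an additive $\frac{\log K}{m}n$ in the exponent. That is not $O(\eta^2\mu(A)^2n)$ once $\eta\mu(A)$ is small, which is exactly the regime used later ($\eta\mu(A)=\epsilon$). It is therefore not an $A$-independent change in $c_2$. Beating it requires $m\gtrsim(\eta\mu(A))^{-2}$, and then controlling the moment generating function of one block is the original problem.

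\textbf{The missing idea.} The paper gets this uniformity without any spectral theory for the induced operator. It works with the un-induced pressure $p_A(q)=P\bigl(\phi+q\bigl(1-\frac{1_A}{\mu(A)}\bigr)\bigr)$. Here the perturbation is bounded and has $\mu$-mean zero, so $p_A(0)=p_A'(0)=0$, and the paper shows $p_A(q)\le q/2$ for $q\le\mu(A)/16$. The inducing inequality \eqref{eq:pressbd} converts this into $P(\phi_A+\frac q2\H_A)\le q/\mu(A)$. This gives finiteness, analyticity via BIP, and $|P(\phi_A+q\H_A)|\le 2|q|/\mu(A)$ for $|q|\le K\mu(A)$ (Proposition~\ref{prop:ana}). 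A Cauchy estimate on a circle of radius $K\mu(A)$ then gives Corollary~\ref{cor:var}. Because the perturbation lives on the original system, where $1_A$ is just an interval indicator, these pressure bounds never see how many cylinders make up $A$. Pressure is then converted into measure using the conformal measures $m_{\phi_A+q\H_A}$, Lemma~\ref{lem:densbd} and Proposition~\ref{prop:conf}.

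\textbf{Single cylinders.} Your route can be completed when $A$ is a single cylinder: the induced system is full-branched Gibbs--Markov with distortion uniform in $A$. Even there, the tail estimate needs more than you state. The remark that $A$ ``cannot return to itself too quickly except with measure $O(\mu(A))$'' is vacuous, since $A$ itself has measure $\mu(A)$. The per-block lower bound $\gtrsim L\mu(A)$ on the hitting probability needs a second-moment bound such as $\sum_{s<L}\mu(A\cap f^{-s}A)\lesssim\mu(A)$. The paper sidesteps this by importing the escape rate from \cite{JurTod25}.
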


\section{Properties of our systems and measures}\label{section:properties}

\label{sec:setup}

We suppose throughout that $I$ can be decomposed into a finite partition $\P$ where if $P\in \P$, then $f:P\to I$ is  $C^2$, uniformly expanding, and $f$ extends continuously to $\overline{P}$ such that $f:\overline{P}\to I$ is a diffeomorphism.

For $n \in\N$ let $\P_n:= \bigvee_{i=0}^{n-1}f^{-i}\P$ and refer to $P\in \P_n$ as an \emph{$n$-cylinder}.  We call an interval $A\subset I$ \emph{Markov} if it can be expressed as the union of $k$-cylinders, for some $k\in \N$. We assume we have  bounded distortion in the sense that
$$\sup_{P\in\P_n}\sup_{x, y\in P}\frac{|Df^n(x)|}{|Df^n(y)|}<\infty.$$

\subsection{Markov shifts and coding}

Given an alphabet $\mathbb{A}$, which will be finite or countable and a $\#\mathbb{A}\times \#\mathbb{A}$ matrix $(a_{ij})_{i, j}$ of 0s and 1s, the corresponding shift space is
$$\Sigma:= \left\{(i_1, i_2, \ldots): i_k\in \mathbb{A} \text{ and } a_{i_k, i_{k+1}} = 1 \text{ for all } k\ge 1\right\},$$
and $\sigma:\Sigma\to \Sigma$ is the left shift.  An \emph{allowed word $\i= i_1\cdots i_k$ of length $k$} is such that $i_j\in \mathbb{A}$ for $1\le j\le k$ and  $a_{i_j, i_{j+1}} = 1$ for $1\le j< k$. Let $\Sigma_k$ be the set of all such words and define $\Sigma^*:= \cup_{k\ge 1} \Sigma_k$.  For $\i\in \Sigma^*$, we write $|\i |= k$ if $\i\in \Sigma_k$.  If all words are allowed, i.e., $\Sigma = \mathbb{A}^{\N}$,  then we call $(\Sigma, \sigma)$, or just $\Sigma$, the  \emph{full shift}.

Given our system $(I, f)$, we can code this by writing $\P= \{P_1, \ldots, P_{\#\P}\}$ and for $\Sigma= \{ 1, \ldots, \#\P\}^{\N}$, the full shift, for $(i_1, i_2, \ldots)\in \Sigma$ we define $\Pi(i_1, i_2, \ldots) = x\in I$ where $f^{k-1}(x)\in P_{i_k}$ for all $k$.  Similarly define $\Pi(\i)$ for $\i\in \Sigma^*$ and note that this corresponds to some element of $\P_k$ if $|\i|=k$.  We note that there may be issues with coding at the boundaries of these intervals, but they will be of zero measure and we can make arbitrary choices there.

Later we will also code induced dynamics by countable Markov shifts. To this end, we say that the shift is \emph{topologically transitive} if for all $a, b\in \mathbb{A}$ there is some $\i\in \Sigma_k$ for some $k$  such that $\i= ai_2\ldots i_{k-1}b$.  In case $\mathbb{A}$ is countable, it is useful to define 
 Sarig's BIP property, see \cite{Sar03}: our shift satisfies the \emph{BIP property} if there exist a finite set $b_1, \ldots, b_N\in \mathbb{A}$ such that for any $a\in \mathbb{A}$, there are $1\le i, j\le N$ such that $b_iab_j\in \Sigma_3$.

\subsection{Thermodynamic formalism and measures}

 Given $\psi:I\to \R$, let  $$V_n(\psi):= \sup_{P\in \P_n}\sup_{x, y\in P}|\psi(x)-\psi(y)|$$ be the \emph{$n$-th variation} of $\psi$ and say that $\psi$ \emph{is locally H\"older continuous} if there is $\ell>0$ such that $ V_n(\psi)= O(e^{-\ell n})$ for $n\ge 1$.  We say that a measure $m_\psi$ on $I$ is \emph{$\psi$-conformal} if
$$m_\psi(f(A))=\int_A e^{-\psi} dm_\psi$$
for all Borel measurable $A \subset I$ such that $f:A \to f(A)$ is a bijection.

We suppose that $\phi:I\to \R$ is H\"older on each $P\in \P$.  Possibly after replacing $\phi$ with $\phi-P(\phi)$, we assume that $P(\phi)=0$.  Uniform expansion, bounded distortion and the H\"older property here imply that $\psi$ is also locally H\"older and there is a $\phi$-conformal measure $m=m_\phi$, see for example \cite{Ryc83}, and a corresponding invariant probability measure $\mu_\phi \ll m_\phi$.  In line with that paper, we define the operator $\L= \L_\phi: \mathfrak{B} \to \mathfrak{B}$
$$\L g(x):=\sum_{y \in f^{-1}x} e^{\phi(y)} g(y)$$
on the Banach space of functions with finite norm $\|\cdot\|=\|\cdot\|_{1}+TV(\cdot)$ where $\|\cdot\|_1$ is the $L^1$ norm with respect to $m$ and $TV$ is the total variation.  Then there exists a strictly positive and bounded function $\rho \in \mathfrak{B}$ such that $\L \rho=\rho$, $m_\phi$ is  the fixed point of the dual $\L_\phi^*m_\phi=m_\phi$, and $d\mu_\phi=\rho dm_\phi$. Moreover $\mu=\mu_\phi$ is \emph{Gibbs} for the potential $\phi$, i.e. there exists $0<C<\infty$ such that for each $P \in \P_n$ and $x\in P$,
$$\frac{1}{C} \leq \frac{\mu(P)}{e^{S_n \phi(x)}} \leq C.$$
We will also use the  \emph{quasi-Bernoulli} property that follows from this: there exists $c>0$ such that for $\i, \j\in \Sigma^*$
$$\frac{1}{c}\mu(\Pi(\i)) \mu(\Pi(\j)) \leq\mu(\Pi(\i \j)) \le c\mu(\Pi(\i)) \mu(\Pi(\j)).$$

We note that most of what we have presented so far can be obtained from the theory of Mauldin-Urba\'nski and Sarig (see for example \cite{MauUrb01} and \cite{Sar99}) where locally H\"older continuous (or summable variation) is the appropriate condition on the observables used, but as in many studies of hitting times or open systems, Rychlik's theory is preferable since the perturbation to the system of introducing a hole/target is less drastic in the associated Banach space (this will be useful in Proposition~\ref{prop:PP} below).  Note that this is true even if the hole, which we will assume is an interval, is Markov.

We will require the following basic regularity property of the measure $\mu$. We let $|\cdot|$ denote the diameter of a set.

\begin{lma} \label{lem:weakreg}
There exist $0<a<b<\infty$ such that for any interval $U$ with diameter $|U|$ then 
\begin{equation*}
a|U|^{D} \leq \mu(U) \leq b |U|^{d}.
\label{eq:weakreg}
\end{equation*}
\end{lma}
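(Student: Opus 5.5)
We will prove both inequalities by passing from intervals to cylinders with the Gibbs property, and then relating cylinder masses at a fixed scale to the extremal exponents $d$ and $D$. The key input is that the limits in \eqref{minkowski} exist and are attained \emph{uniformly}: the maxima and minima of cylinder masses behave like a submultiplicative and a supermultiplicative sequence.

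\textbf{Step 1: scale-$r$ cylinders.} For $r\in(0,1)$ we let $\mathcal{Q}_r$ be the Moran cover consisting of cylinders $P\in\P_n$ with $|P|\le r<|\hat P|$, where $\hat P$ is the parent cylinder. By bounded distortion and the fact that there are finitely many branches, each $P\in\mathcal{Q}_r$ has $|P|\asymp r$. Consequently:
\begin{itemize}
\item any interval $U$ with $|U|= r$ meets at most a uniformly bounded number of elements of $\mathcal{Q}_r$;
\item any such $U$ contains an element of $\mathcal{Q}_{r/K}$ for some fixed $K$.
\end{itemize}
It therefore suffices to show that there are $a',b'>0$ with
$$a' r^D\le \min_{P\in\mathcal{Q}_r}\mu(P)\quad\text{and}\quad \max_{P\in\mathcal{Q}_r}\mu(P)\le b' r^d$$
for all $r$. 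The case of $|U|$ bounded below is trivial, since $\mu$ is fully supported with $\mu(U)\ge$ a positive constant there.

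\textbf{Step 2: words and the quasi-Bernoulli property.} We encode by words and set
$$\alpha_n:=\max_{|\i|=n}\mu(\Pi(\i)),\qquad \beta_n:=\min_{|\i|=n}\mu(\Pi(\i)).$$
The quasi-Bernoulli property gives $\alpha_{n+m}\le c\,\alpha_n\alpha_m$ and $\beta_{n+m}\ge c^{-1}\beta_n\beta_m$. By Fekete's lemma applied to $\log(c\,\alpha_n)$ and $\log(\beta_n/c)$, we obtain the \emph{uniform} bounds
$$\alpha_n\le c^{-1}e^{n\underline s}\quad\text{and}\quad\beta_n\ge c\,e^{n\overline s},$$
where $\underline s=\lim\frac1n\log\alpha_n$ and $\overline s=\lim\frac1n\log\beta_n$. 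These bounds hold for every $n$, not just asymptotically; this is exactly what removes the $\delta^{o(1)}$ error.

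\textbf{Step 3: the same argument for $\mathcal{Q}_r$.} Step 3 is the main obstacle. Cylinders in $\mathcal{Q}_r$ have different lengths $n$, so the word-length exponents of Step 2 are not the right ones. We instead need exponents of the form $\log\mu(P)/\log|P|$. To handle this, we will apply the super/submultiplicativity argument directly to the scale family rather than to word length.

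The underlying observation is as follows. If $P=\Pi(\i)\in\mathcal{Q}_r$ and $Q=\Pi(\j)\in\mathcal{Q}_s$, then by bounded distortion $\Pi(\i\j)$ has diameter $\asymp rs$, and by quasi-Bernoulli it has mass $\asymp\mu(P)\mu(Q)$. The concatenated cylinder lies within a bounded number of generations of $\mathcal{Q}_{rs}$. Define
$$F(r):=\max_{P\in\mathcal{Q}_r}\mu(P).$$
Then $F(rs)\le C\,F(r)F(s)$; going from $\Pi(\i\j)$ to its ancestor in $\mathcal{Q}_{rs}$ costs only a bounded factor, since masses of parents and children are comparable. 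Hence $g(t):=\log\bigl(C F(e^{-t})\bigr)$ is subadditive in $t$, up to a bounded perturbation that we must control. A continuous-parameter Fekete lemma then gives $F(e^{-t})\le C^{-1}e^{-td}$ for all $t$, where $d=\lim -g(t)/t$ is identified with the $d$ of \eqref{minkowski} via Step 1.

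The symmetric argument for the minimum, using a supermultiplicative inequality, gives $\min_{P\in\mathcal{Q}_r}\mu(P)\ge a'r^D$. Combining with the comparisons in Step 1 yields $a|U|^D\le\mu(U)\le b|U|^d$.

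The delicate point is making the scale-concatenation estimate $F(rs)\lesssim F(r)F(s)$ rigorous. The bounded-distortion constants must not accumulate, and the boundary effects (partially overlapping Moran cylinders) must contribute only bounded multiplicative constants, so that the Fekete argument applies cleanly.
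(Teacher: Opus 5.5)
There is a genuine gap: you apply Fekete's lemma in the wrong direction, and this drives both Step 2 and Step 3. If $a_n=\log(c\,\alpha_n)$ is subadditive, Fekete gives $\lim a_n/n=\inf_n a_n/n$. Hence $a_n\ge n\,\underline s$ for every $n$, i.e.\ $\alpha_n\ge c^{-1}e^{n\underline s}$, which is a \emph{lower} bound on the maximal mass. To see that no upper bound can follow, note that $\alpha_n=Ke^{ns}$ satisfies $\alpha_{n+m}\le c\,\alpha_n\alpha_m$ for every $K\ge c^{-1}$, however large. Likewise, supermultiplicativity of $\beta_n$ only gives $\beta_n\le c\,e^{n\overline s}$.

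The same reversal occurs in Step 3. You obtain $F(rs)\le C\,F(r)F(s)$ correctly, by splitting a maximiser at scale $rs$ into a prefix in $\mathcal{Q}_r$ and a suffix at scale $\asymp s$. But this yields only $F(r)\ge C^{-1}r^d$. Your supermultiplicative inequality for the minimum yields only $\min_{\mathcal{Q}_r}\mu\le C r^D$. These are the uninformative directions, so neither inequality of the lemma follows. The point you flagged as delicate, non-accumulation of distortion constants, is not where the problem lies: each concatenation costs only one bounded factor.

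The missing idea is to \emph{concatenate} two extremal cylinders rather than split one.
\begin{itemize}
\item For the maximum, take maximisers $\Pi(\i)\in\mathcal{Q}_r$ and $\Pi(\j)\in\mathcal{Q}_s$. The word $\i\j$ is admissible because we have a full shift, $|\Pi(\i\j)|\asymp rs$ by bounded distortion, and $\mu(\Pi(\i\j))\ge c^{-1}\mu(\Pi(\i))\mu(\Pi(\j))$. Passing to a relative in $\mathcal{Q}_{rs}$ a bounded number of generations away costs a bounded factor, so $F(rs)\ge C^{-1}F(r)F(s)$. Now $t\mapsto \log\bigl(F(e^{-t})/C\bigr)$ is superadditive, so it is at most $-dt$, giving $F(r)\le C r^d$.
\item For the minimum, concatenate two minimisers and use the upper quasi-Bernoulli bound. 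This gives $\min_{\mathcal{Q}_{rs}}\mu\le C\,\min_{\mathcal{Q}_r}\mu\cdot\min_{\mathcal{Q}_s}\mu$, hence $\min_{\mathcal{Q}_r}\mu\ge C^{-1}r^D$.
\end{itemize}

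With this correction your scheme works. It is a cleaner, Fekete-style packaging of the paper's argument, which concatenates a single cylinder with itself $k$ times. The paper chooses $\Pi[\i]\subset B(x,r)$ with $|\Pi[\i]|\ge\kappa r$ and then argues as follows:
\begin{itemize}
\item the upper quasi-Bernoulli bound gives $\mu(\Pi[\i])^k\gtrsim c^{-k}\mu(\Pi[\i^k])$;
\item the asymptotic definition of $D$ gives $\mu(\Pi[\i^k])\ge|\Pi[\i^k]|^{D+\epsilon}$ once $k$ is large;
\item bounded distortion gives $|\Pi[\i^k]|\ge(\kappa r)^k$ up to a factor $C^{k}$;
\item taking $k$-th roots leaves constants independent of $k$, and letting $\epsilon\to0$ finishes the proof.
\end{itemize}
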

\begin{proof}
We begin with the left hand side. Let $U=B(x,r)$ for some $x \in I$. Note that by definition of the Minkowski dimension, for all $\epsilon>0$ there exists $k=k(\epsilon)$ such that for all $\i \in \Sigma^*$, $|\i|\ge k$, 
$$\mu(\Pi[i_1 \ldots i_k]) \geq |\Pi[i_1 \ldots i_k]|^{D+\epsilon}.$$
Fix $x \in X$, $r>0$ and $\Pi(\i)=x$. We can find $n \in \N$ such that $\Pi[i_1 \ldots i_n] \subset B(x,r)$ and $|\Pi[i_1 \ldots i_n]| \geq \kappa r$ (where the constant $\kappa \in (0,1)$ is independent of $x$ and $r$ because our map has finitely many uniformly expanding branches).

Let $\epsilon>0$ be arbitrary and fix $k(\epsilon)=k$ as above. By the quasi-Bernoulli property,
\begin{align*}
\mu(\Pi[i_1 \ldots i_n])^k \geq c^k\mu(\Pi[(i_1 \ldots i_n)^k])\geq c^k|\Pi[(i_1 \ldots i_n)^k]|^{D+\epsilon} \geq c^k(\kappa^kr^k)^{D+\epsilon}
\end{align*}
where the constant in the final line comes from considering the bounded distortion property for $(f^n)'$ and applying the chain rule to $(f^{nk})'$. Therefore
$$\mu(\Pi[i_1 \ldots i_n]) \geq c\kappa^{D+\epsilon}r^{D+\epsilon}.$$
But since $c$ and $\kappa$ were independent of $\epsilon$ and $\epsilon>0$ was arbitrary,
$$\mu(B(x,r)) \geq\mu(\Pi[i_1 \ldots i_n]) \geq c\kappa^{D}r^{D}.$$
The right hand side is proved similarly by instead considering upper bounds and substituting $d$ for the Minkowski dimension $D$.
\end{proof}

\section{Proof of the main results}\label{section:mainproof}

We begin with the proof of our simplified result Theorem~\ref{thm:easierproof}, which estimates the expected $\mathcal{A}_\delta$-restricted blanket time, where we quantify how quickly the visits to a specific representative cover or partition $\mathcal{A}_\delta$ are of roughly the correct frequency.

\begin{proof}[Proof of Theorem~\ref{thm:easierproof}]
Fix $\epsilon, \delta \in (0,1)$. It is easy to see that given any interval in $A \in\A_\delta$ we can find a set $U\subset A$ such that $|U|>\delta/2$, $\frac{\mu(U)}{\mu(A)} \geq 1-{\epsilon}$ and $U$ can be realised as a finite union of cylinders. Let $\U_\delta$ denote the corresponding collection of such sets $U$.  

Note that if $N>\mathcal{B}_{\U_\delta}^{1-\epsilon}(x)$ then for any $A \in \A_\delta$, there exists $U \in \U_\delta$ such that $U \subset A$ and
$$\frac{1}{N} \sum_{i=0}^{N-1} \mathbf{1}_A(f^ix) \geq \frac{1}{N} \sum_{i=0}^{N-1} \mathbf{1}_U(f^ix) \geq \left(1-{\epsilon}\right)\mu(U) \geq \left(1-{\epsilon}\right)^2\mu(A)\geq \left(1-2{\epsilon}\right)\mu(A)$$
and hence 
$$\mathbb{E}_\mu\left(\B_{\A_\delta}^{1-2{\epsilon}}\right) \leq \mathbb{E}_\mu\left(\B_{\U_\delta}^{1-\epsilon}\right).$$
Thus we can estimate
\begin{align*}
\mathbb{E}_\mu(\B_{\A_\delta}^{1-2{\epsilon}}) \leq \mathbb{E}_\mu(\B_{\U_\delta}^{1-\epsilon})   &= \sum_{N=1}^\infty \mu\left(\B_{\U_\delta}^{1-\epsilon} \geq N\right)\leq \sum_{N=1}^\infty \sum_{U \in \U_\delta} \mu\left(\H_U^{\lceil N \mu(U)(1-\epsilon)\rceil} \geq N\right).
\end{align*}

 By Proposition~\ref{prop:ChaLep}, there exist $c_1,c_2,c_3>0$ such that for $\epsilon<c_1$ and a Markov interval $A$,
$$
\mu\left( \H_A^{M } \geq \frac{M}{\mu(A)} (1+\epsilon)\right) \le c_3 e^{-\epsilon^2Mc_2}
$$
for any $M \in \N$. Therefore, for each $U \in \U_\delta$ we can set $M=\lceil N \mu(U)(1-\epsilon)\rceil$ to obtain
\begin{align*}
\mu\left( \H_A^{\lceil N \mu(U)(1-\epsilon)\rceil} \geq N\right) &\leq \mu\left( \H_A^{\lceil N \mu(U)(1-\epsilon)\rceil} \geq \frac{\lceil N \mu(U)(1-\epsilon)\rceil}{\mu(U)}(1+\epsilon)\right)  \nonumber\\
&    \le c_3 e^{-\epsilon^2\lceil N \mu(U)(1-\epsilon)\rceil c_2} \leq c_3e^{-2c_2\epsilon^2N \mu(U)}. 
\end{align*}
Let $a$ be given by Lemma \ref{lem:weakreg} and write  $\mathbf{C}_\delta=2^{D}a^{-1} \delta^{-D} \log (1/\delta)$. Then

\begin{align*}
\mathbb{E}_\mu(\B_{\A_\delta}^{1-2{\epsilon}}) &\leq \sum_{N=1}^\infty \sum_{U \in \U_\delta} c_3e^{-2c_2\epsilon^2N \mu(U)}\leq  \sum_{N=0}^\infty \sum_{U \in \U_\delta} c_3 e^{-N \mathbf{C}_\delta \mu(U)} \frac{1}{2c_2\epsilon^2} \mathbf{C}_\delta\\
&\lesssim \frac{ \mathbf{C}_\delta}{\epsilon^2}+ \frac{ \mathbf{C}_\delta}{\epsilon^2} \left( \sum_{N=1}^\infty \sum_{U \in \U_\delta}e^{-N \mathbf{C}_\delta \mu(U)}\right).
\end{align*}
Now note that
$$ \sum_{N=1}^\infty \sum_{U \in \U_\delta}e^{-N \mathbf{C}_\delta \mu(U)}= \sum_ {U \in \U_\delta} e^{-\mathbf{C}_\delta \mu(U)} \sum_{N=1}^{\infty} e^{-(N-1)\mathbf{C}_\delta \mu(U)}.$$
Since $|U| \geq \frac{\delta}{2}$, by Lemma \ref{lem:weakreg} 
$$\mathbf{C}_\delta\mu(U) \geq\mathbf{C}_\delta a \left|\frac{\delta}{2}\right|^{D}=\log(1/\delta)>1,$$
the second sum $\sum_N e^{-(N-1)\mathbf{C}_\delta\mu(U)}$ is uniformly bounded over all $U \in \U_\delta$ and $\delta \in(0,1)$. Since $\A_\delta$ was a minimal cover of $I$, $\#\U_\delta =\#\A_\delta\approx 1/\delta$, the first sum $\sum_ {U \in \U_\delta} e^{-\mathbf{C}_\delta\mu(U)}$ is also uniformly bounded over all $U \in \U_\delta$ and $\delta \in(0,1)$. This completes the proof.
\end{proof}

Let us now turn back to our original blanket time problem, in which we care about $\epsilon$-approximately equidistributing over \emph{all} balls. Recall that an $(\epsilon,\delta)$-multifractal discretisation of our measure (Definition \ref{mfd}) is a collection $\A_{\delta,\epsilon}$ of subintervals of $I$ such that for any $\delta$-ball $B(x,\delta)$, there exists $A \in \A_{\delta, \epsilon}$ such that $A \subset B(x,\delta)$ and $\frac{\mu(A)}{\mu(B(x,\delta))} \geq 1-\epsilon.$ Recall that $\A_{\delta,\epsilon}$ will generally not be a partition since its elements may be highly overlapping.

It is easy to see that 
$$ \mathbb{E}_\mu(\B_\delta^{1-2\epsilon}) \leq \mathbb{E}_\mu(\B_{\A_{\delta,\epsilon}}^{1-\epsilon})$$
since if $N>\mathcal{B}_{\A_{\delta,\epsilon}}^{1-\epsilon}(x)$ then for any $B(y,\delta)$ we can, by definition of a multifractal discretisation, find a set $A \subset B(y,\delta)$ such that
$$\frac{1}{N} \sum_{i=0}^{N-1} \mathbf{1}_{B(y,\delta)}(f^ix) \geq \frac{1}{N} \sum_{i=0}^{N-1} \mathbf{1}_A(f^ix) \geq (1-\epsilon)\mu(A) \geq (1-\epsilon)^2\mu(B(y,\delta))\geq (1-2\epsilon)\mu(B(y,\delta)).$$
Hence we can reduce the global blanket time problem to a restricted blanket time problem. By following the proof of Theorem \ref{thm:easierproof}, we should deduce an estimate which depends on how $\sum_ {A\in \A_{\delta,\epsilon}} e^{-\mathbf{C}_\delta\mu(A)}$ scales as $\delta,\epsilon \to 0$, for an appropriate $\mathbf{C}_\delta$. Combining this with Proposition \ref{prop:mftoolintro}, which ensures that a multifractal discretisation $\A_{\delta,\epsilon}$ can be found such that $\sum_ {A\in \A_{\delta,\epsilon}} e^{-\mathbf{C}_\delta\mu(A)}$ can be bounded uniformly over all small scales $\delta$, will complete the proof of Theorem \ref{thm:main}.

\begin{proof}[Proof of Theorem \ref{thm:main}]   
Fix a measure $\mu$ and let $\A_{\delta,\epsilon}$ be a multifractal decomposition of $\mu$ given by Proposition \ref{prop:mftoolintro}. Then
\begin{align*}
\mathbb{E}_\mu(\B_{\delta}^{1-2\epsilon}) \leq\mathbb{E}_\mu(\B_{\A_{\delta,\epsilon}}^{1-{\epsilon}})  &= \sum_{N=1}^\infty \mu\left(\B_{\A_{\delta,\epsilon}}^{1-\epsilon} \geq N\right)\leq \sum_{N=1}^\infty \sum_{A \in \A_{\delta,\epsilon}} \mu\left(\H_A^{\lceil N \mu(A)(1-\epsilon)\rceil} \geq N\right).
\end{align*}

 By Proposition~\ref{prop:ChaLep}, there exist $c_1,c_2,c_3>0$ such that for $\epsilon<c_1$  
$$
\mu\left( \H_A^{M } \geq \frac{M}{\mu(A)} (1+\epsilon)\right) \le c_3 e^{-\epsilon^2Mc_2}
$$
for any $A \in \A_{\delta,\epsilon}$ and $M \in \N$. Therefore, for each $A \in \A_{\delta,\epsilon}$ we can set $M=\lceil N \mu(A)(1-\epsilon)\rceil$ to obtain
\begin{align*}
\mu\left( \H_A^{\lceil N \mu(A)(1-\epsilon)\rceil} \geq N\right) &\leq \mu\left( \H_A^{\lceil N \mu(A)(1-\epsilon)\rceil} \geq \frac{\lceil N \mu(A)(1-\epsilon)\rceil}{\mu(A)}(1+\epsilon)\right)  \\
&    \le c_3 e^{-\epsilon^2\lceil N \mu(A)(1-\epsilon)\rceil c_2} \leq c_3e^{-2c_2\epsilon^2N \mu(A)}.
\end{align*}
Let $a$ be given by Lemma \ref{lem:weakreg} and write  $\mathbf{C}_\delta=2^{D}a^{-1} \delta^{-D} \log (1/\delta)$. Then
\begin{align*}
\mathbb{E}_\mu(\B_{\A_{\delta,\epsilon}}^{1-{\epsilon}}) &\leq \sum_{N=1}^\infty \sum_{A \in \A_{\delta,\epsilon}} c_3e^{-2c_2\epsilon^2N \mu(A)}\leq  \sum_{N=0}^\infty \sum_{A \in \A_{\delta,\epsilon}} c_3 e^{-N \mathbf{C}_\delta \mu(A)} \frac{1}{2c_2\epsilon^2} \mathbf{C}_\delta\\
&\lesssim \frac{ \mathbf{C}_\delta}{\epsilon^2}+ \frac{ \mathbf{C}_\delta}{\epsilon^2} \left( \sum_{N=1}^\infty \sum_{A \in \A_{\delta,\epsilon}}e^{-N \mathbf{C}_\delta \mu(A)}\right).
\end{align*}
Now note that 
$$ \sum_{N=1}^\infty \sum_{A \in\A_{\delta,\epsilon}}e^{-N \mathbf{C}_\delta \mu(A)}= \sum_ {A \in \A_{\delta,\epsilon}} e^{-\mathbf{C}_\delta \mu(A)} \sum_{N=1}^{\infty} e^{-(N-1)\mathbf{C}_\delta \mu(A)}.$$
As before, the lower bound on the diameter of $A$ means that the second sum can be uniformly bounded using Lemma \ref{lem:weakreg}. The first sum can be bounded by $O(\epsilon^{-1/d})$ by Proposition \ref{prop:mftoolintro}, so the leading term is $\frac{ \mathbf{C}_\delta}{\epsilon^{2+\frac1d}}$, as required.
\end{proof}

\section{The pressure function and measures}\label{section:largedev}

In this section we will  prove Proposition~\ref{prop:ChaLep}.   
 We start by defining some pressure functions for our original system and our induced system.  We will find uniform information on the shape of these functions, find corresponding invariant and conformal measures and uniform estimates on their densities, as well as estimates on the measures of certain cylinders.  
 
We will choose intervals $A$ and define $p_A(q):= P\left(\phi+q\left(1-\frac{1_A}{\mu(A)}\right)\right)$.

\begin{prop}  For any interval $A$, the following hold.
\begin{enumerate}
\item[(a)] There exists a $\left(\phi+q\left(1-\frac{1_A}{\mu(A)}\right)-p_A(q)\right)$-conformal measure $m_{A, q}$ and an equilibrium state $\mu_{A, q}$ for $\phi+q\left(1-\frac{1_A}{\mu(A)}\right)$.
\item[(b)] The function $p_A$ is analytic.
\end{enumerate}
\label{prop:PP}
\end{prop}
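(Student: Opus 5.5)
The plan is to treat $\psi_q:=\phi+q\bigl(1-\frac{1_A}{\mu(A)}\bigr)$ as a perturbation of $\phi$ in Rychlik's framework and to use the transfer operator $\L_{\psi_q}g=\L_\phi(e^{q(1-1_A/\mu(A))}g)$ on the Banach space $\mathfrak{B}$ (norm $\|\cdot\|_1+TV$) introduced in \S\ref{sec:setup}. Since $A$ is an interval, $1_A$ has bounded variation, with $TV(1_A)\le 2$. The potential $\psi_q$ is therefore piecewise H\"older on the partition $\P$ refined by the two endpoints of $A$. Equivalently, $e^{\psi_q}$ has bounded variation on each element of a finite partition, and Rychlik's hypotheses are exactly of this type: they are stable under multiplying $e^{\phi}$ by a BV weight bounded away from zero. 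This is why the paper prefers Rychlik's theory over Mauldin--Urba\'nski/Sarig here: no Markov assumption on $A$ is needed.

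\textbf{Part (a).} First I will check Rychlik's conditions for $\psi_q$. The map is the same uniformly expanding, full-branched map with finitely many branches, and $e^{\psi_q}$ is BV on each branch. The key condition is that $\sup e^{S_n\psi_q}$ is eventually smaller than the contraction of the total variation under $\L^n$. This follows from $\phi$ and the fact that the perturbation $q(1-1_A/\mu(A))$ is bounded. If necessary, I will pass to an iterate $f^n$ so that $\|e^{S_n\psi_q}\|_\infty<1$ relative to $e^{np_A(q)}$. Rychlik's theorem then gives a spectral gap for $\L_{\psi_q}$ with leading eigenvalue $\lambda_q=e^{p_A(q)}$. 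It also gives an eigenmeasure $m_{A,q}$ with $\L_{\psi_q}^*m_{A,q}=\lambda_q m_{A,q}$, which is precisely $(\psi_q-p_A(q))$-conformality, and a positive eigenfunction $\rho_q$. Setting $d\mu_{A,q}=\rho_q\,dm_{A,q}$ yields an invariant measure. I will check that it is an equilibrium state using the standard Rychlik/Hofbauer--Keller argument: $h_{\mu_{A,q}}+\int\psi_q\,d\mu_{A,q}=\log\lambda_q$ via the Rokhlin formula with Jacobian $e^{-\psi_q+p_A(q)}\rho_q\circ f/\rho_q$. I will also identify $\log\lambda_q$ with the periodic-orbit pressure $P(\psi_q)$ defined in \S\ref{maps}, using the full-branch structure, since every cylinder contains a unique periodic point.

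\textbf{Part (b).} The family $q\mapsto\L_{\psi_q}=\L_\phi\circ M_q$, where $M_q$ is multiplication by $e^{q(1-1_A/\mu(A))}$, is an entire operator-valued function of $q$ on $\mathfrak{B}$. Indeed, $M_q=\sum_k \frac{q^k}{k!}M^k$ with $M$ the multiplication by $(1-1_A/\mu(A))$, which is a bounded operator on $\mathfrak{B}$ since BV is a Banach algebra. The leading eigenvalue is simple and isolated (spectral gap) for every real $q$. Kato's analytic perturbation theory then gives that $q\mapsto\lambda_q$ is real-analytic near each real $q$, and positivity gives $\lambda_q>0$. Hence $p_A=\log\lambda_q$ is analytic.

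\textbf{Main obstacle.} The main obstacle is verifying Rychlik's quasi-compactness for every real $q$, not just small $q$, since the perturbation has size $|q|/\mu(A)$. I will handle this by noting that Rychlik's theorem applies to any potential whose exponential is BV on a finite partition. The only scale-dependent input is the lower bound $\inf e^{\psi_q}>0$ together with the Lasota--Yorke inequality for a sufficiently high iterate, and the constants are allowed to depend on $A$ and $q$ here. Uniformity in $A$ is not claimed in this proposition.
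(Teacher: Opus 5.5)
Your proposal follows essentially the same route as the paper. The paper's proof simply invokes Rychlik's spectral gap for $\L_{A,q}$ on $\mathfrak{B}$ together with the standard Parry--Pollicott perturbation theory to get the conformal measure, the equilibrium state and the analyticity of $p_A$; you spell those steps out (eigenmeasure, $d\mu_{A,q}=\rho_q\,dm_{A,q}$, analytic perturbation of a simple isolated leading eigenvalue).

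One caveat, which the paper's proof shares: Rychlik's theorem does not apply to every potential with $e^{\psi_q}$ of bounded variation, so your main-obstacle paragraph misstates it. It also needs the contraction condition $\lim_{n\to\infty}\frac1n\sup S_n\psi_q<p_A(q)$. Mere boundedness of the perturbation does not give this once $|q|$ is large compared with $\mu(A)$, although for Markov $A$ (the only case used later) $\psi_q$ is H\"older on the full shift and Parry--Pollicott's Ruelle--Perron--Frobenius theory gives (a) and (b) directly for all real $q$.
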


\begin{proof}
These facts follow from standard techniques of \cite[Chapter 4]{ParPol90} along with the fact that $\L_{A, q}:= \L_{\phi+ q\left(1-\frac{1_A}{\mu(A)}\right)}$ has a spectral gap for any $q$, since our function $\phi+q\left(1-\frac{1_A}{\mu(A)}\right)\in \mathfrak{B}$, see \cite{Ryc83}.  
\end{proof}

\subsection{Inducing}
We next take first return maps to suitable intervals $A$ and give some basic properties of these induced systems.

For a Markov interval $A \subset I$, which we assume can be written as $A=\bigcup_{i=1}^p A_i$ where $A_i\in\P_k$,  let $F_A:A \to A$ be the first return map to $A$, i.e. $F_A(x)=f^{\H_A(x)}(x)$.  Then there exists a natural maximal partition $\P^A$ so that $\H_A$ is constant on each element $P\in \P^A$ and $F_A\overline P = \overline{A_{i}}$ for some $i=1, \ldots, p$. The latter Markov property is inherited from the Markov nature of $A$.
We can label the elements of $\P^A= \P_1^A$ by $\{P_1, P_2, \ldots\}$ thus giving a coding by $i\in \N$.  Then the dynamics induces a coding of points in $A$ by $(i_1, i_2, \ldots)\in \N^{\N}$ where $\Pi(i_1, i_2, \ldots) =x$ if $F_A^k(x)\in P_{i_k}$ for all $k \in \N$.  We write the corresponding shift space as $(\Sigma^A, \sigma)$.

Let $\P_n^A$ denote the induced $n$-cylinders where we enumerate the depths and note any $P\in \P_n^A$ can be written as $\Pi[i_1, \ldots, i_n] = P$ for some $i_i\ldots i_n\in \Sigma_n^A$.  Hence $P\in \P_n^A$ implies there is $A_i$ with $F_A^n(\overline{P})=\overline{A_i}$.   We will sometimes emphasise the symbolic address here, referring to $\Pi[i_1, \ldots, i_n]$ and sometimes just write $P\in \P_n^A$.  We abuse notation by saying that $P_1P_2\ldots P_n$ is an allowable word if there is $x\in A$ with $F_A^{k-1}x\in P_k\in \P_1^A$ for $k=1, \ldots, n$.  From here on we will stop being concerned about what happens at boundary points,  further abusing notation to write, for example $F_A^nP=A_i$ for the above statement.  

Note that $\H_A^n=S_n \H_A$.
We define the induced potential $\phi_A$ by $\phi_A(x)= S_{\H_A(x)}\phi(x)$. 
It is useful here to point out that we use $S_n\psi= S_n^g\psi$ to denote the ergodic sum for the relevant dynamics $g$, so for example $\H_A^n=S_n^{F_A} \H_A$, while $\phi_A(x)= S_{\H_A(x)}^f\phi(x)$.
 We can develop the thermodynamic formalism for this system using the theory of Mauldin-Urba\'nski and Sarig or Rychlik \cite{Ryc83}, as mentioned above. 

We collect some standard useful properties of our induced system in the next proposition.

\begin{prop}
For a Markov interval $A =\bigcup_{i=1}^p A_i$ where $A_i\in\P_k$ and  $F_A:A \to A$ is the first return map to $A$,
\begin{enumerate}
\item[(a)] $(A, F_A)$ is coded by a shift with the BIP property;
\item[(b)] the induced potential $\phi_A$ is locally H\"older continuous;
\item[(c)] $P(\phi_A)=0$;
\item[(d)]  $\mu_A=\mu_{\phi_A}= \frac{1}{\mu(A)}\mu |_A$ is $F_A$-invariant and has the Gibbs property for $\phi_A$;
\item[(e)] $\E_{\mu_A}(\H_A)=\frac{1}{\mu(A)}$;
\item[(f)] $d\mu_A=\rho_A dm_A$ where $\rho_A=\frac{m(A)}{\mu(A)} \rho$ and $m_A=\frac{1}{m(A)} m|_A$. 
\item[(g)] $m_A$ is $\phi_A$-conformal;
\end{enumerate}
\label{prop:indprops}
\end{prop}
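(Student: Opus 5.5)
We prove the seven items in an order where each uses the earlier ones, working throughout with the symbolic description of the first return map $F_A$ on the Markov interval $A=\bigcup_{i=1}^p A_i$.

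\textbf{Step 1: coding and the BIP property (a).} Each $P\in\P^A$ is a cylinder of $f$ of depth $\H_A|_P+k$ on which $f^{\H_A}$ is a full branch onto some $A_j$. Hence the transition matrix of $\Sigma^A$ is determined by
\[
a_{PQ}=1 \iff Q\subset F_A(P)=A_j .
\]
So the type of $P$ depends only on the image index $j\in\{1,\dots,p\}$. For BIP we pick, for each pair $i,j$, one element $b_{ij}\in\P^A$ with $b_{ij}\subset A_i$ and $F_A b_{ij}=A_j$. Such elements exist because $f$ has full branches and is topologically mixing on cylinders: from $A_i$ some iterate covers $I\supset A_j$ via a path that avoids $A$ in between, unless $A$ is all of $I$, a trivial case. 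These finitely many symbols suffice. Given $a\in\P^A$ with $a\subset A_i$ and $F_Aa=A_j$, we take $b=b_{\cdot i}$ with image $A_i$, and any $b'\subset A_j$ from the list. Then $b\,a\,b'$ is allowed, which also gives topological transitivity.

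\textbf{Step 2: regularity of $\phi_A$ (b).} Take $x,y$ in the same induced $n$-cylinder. Then $x,y$ lie in a common $f$-cylinder of depth at least $n$ plus the total return time. We split
\[
\phi_A(x)-\phi_A(y)=\sum_{i<\H_A}\bigl(\phi(f^ix)-\phi(f^iy)\bigr).
\]
Each term is controlled by uniform expansion and the H\"older property: $|f^ix-f^iy|\lesssim\lambda^{-(\H_A-i)}\lambda^{-(n-1)}$. Summing this geometric series gives $V_n(\phi_A)=O(e^{-\ell n})$, with constants uniform in $A$.

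\textbf{Step 3: invariance, Gibbs property, and the return time (d), (e).} $F_A$-invariance of $\frac1{\mu(A)}\mu|_A$ is the standard first-return (Kac) construction. Ergodicity of $\mu$ then gives Kac's formula,
\[
\E_{\mu_A}\H_A=\frac{\mu\bigl(\bigcup_n f^{-n}A\bigr)}{\mu(A)}=\frac{1}{\mu(A)},
\]
which is (e). The Gibbs property (d) comes from the Gibbs property of $\mu$. An induced cylinder $\Pi[i_1\ldots i_n]$ is an $f$-cylinder of depth $\H_A^n+k$, so $\mu$ of it is $C^{\pm}e^{S_{\H^n_A}\phi(x)+O(k)}=C'^{\pm}e^{S_n^{F_A}\phi_A(x)}$, where $O(k)$ absorbs the final $k$ symbols. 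The same bound holds with the factor $1/\mu(A)$, since that is a constant.

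\textbf{Step 4: pressure and the conformal and density relations (c), (f), (g).} For (c), summing the Gibbs estimate over $n$-cylinders gives $\sum e^{S_n\phi_A}\asymp 1$. By BIP and Sarig's theory the Gurevich pressure then equals $\lim\frac1n\log(\text{const})=0$. For (g), take $B\subset P\in\P^A$ on which $F_A=f^{r}$ is injective. Iterating the $\phi$-conformality of $m$ (recall $P(\phi)=0$) gives
\[
m(F_AB)=\int_B e^{-S_r\phi}\,dm=\int_B e^{-\phi_A}\,dm,
\]
and normalising by $m(A)$ preserves this relation. Item (f) is then immediate from $d\mu=\rho\,dm$ restricted to $A$, after renormalising both measures:
\[
d\mu_A=\frac{1}{\mu(A)}\rho\,dm|_A=\frac{m(A)}{\mu(A)}\rho\,dm_A .
\]

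The only step needing genuine care is (a), specifically choosing the finite BIP set uniformly. A secondary point is handling the boundary symbols in (d). The rest is routine.
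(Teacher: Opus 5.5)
Your structure matches the paper for (a), (e), (f) and (g). However, the existence claim in Step~1 is false as stated, and its justification does not work.

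\textbf{The error in Step 1.} It is not true that for every pair $(i,j)$ there is $b_{ij}\in\P^A$ with $b_{ij}\subset A_i$ and $F_Ab_{ij}=A_j$. Nor can one always go from $A_i$ to $A_j$ along a path avoiding $A$. Take the doubling map with $k=2$ and $A=[0,\tfrac34)=A_1\cup A_2\cup A_3$, where $A_1=[0,\tfrac14)$, $A_2=[\tfrac14,\tfrac12)$, $A_3=[\tfrac12,\tfrac34)$. Since $f(A_1)=[0,\tfrac12)\subset A$, every point of $A_1$ returns at time $1$ and $F_A(A_1)=A_1\cup A_2$. So no element of $\P^A$ inside $A_1$ maps onto $A_3$.

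\textbf{Why the BIP argument survives.} Your argument only uses two things: for each $i$, \emph{some} element of $\P^A$ (anywhere in $A$) with image $A_i$, and for each $j$, some element contained in $A_j$. This is exactly the paper's choice $\{P_1,\dots,P_p,Q_1,\dots,Q_p\}$. Elements of the second kind trivially exist. Elements of the first kind need a last-exit argument instead of your path argument. Pick $x\in A$ whose orbit enters $A_i$ at some time $n\ge1$; this exists by ergodicity and full support of $\mu$. Let $m<n$ be the last time before $n$ with $f^mx\in A$. Then $f^mx$ lies in an element of $\P^A$ whose $F_A$-image is $A_i$.

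Your remark that BIP ``also gives topological transitivity'' also relied on the all-pairs claim. Transitivity of $\Sigma^A$ follows instead from ergodicity of $\mu_A$ under $F_A$, together with $\mu_A(a)>0$ for every symbol $a$.

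\textbf{Comparison for (c) and (d).} With that repaired, the proof is correct, and here you take a different route from the paper.
\begin{itemize}
\item The paper gets $P(\phi_A)=0$ from the abstract inducing lemma of Iommi--Todd. It gets the Gibbs property from Sarig's theorem that the equilibrium state of a locally H\"older potential on a BIP shift is Gibbs, combined with Kac.
\item You push the Gibbs property of $\mu$ down to induced cylinders directly. An induced $n$-cylinder is an $f$-cylinder of depth $\H_A^n+k$, and $S^{F_A}_n\phi_A=S^f_{\H_A^n}\phi$. You then read off $P(\phi_A)=0$ from $\sum_{P\in\P^A_n}e^{S_n\phi_A}\asymp 1$, using that Gurevich pressure equals the cylinder-sum pressure on BIP shifts.
\end{itemize}
Your route is more elementary and self-contained. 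Its Gibbs constant depends on $A$ (through $k$, $\mu(A)$ and the $\mu(A_l)$). This is harmless, since the later uniform estimates run through $C_D$ (as in Lemma~\ref{lem:densbd}) rather than through a Gibbs constant for $\mu_A$.

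\textbf{Missing step in (d).} Your route does not yet show that $\frac1{\mu(A)}\mu|_A$ is the equilibrium state $\mu_{\phi_A}$, which (d) asserts. One more line suffices. You can use Abramov's formula, $h_{\mu_A}(F_A)+\int\phi_A\,d\mu_A=\mu(A)^{-1}\bigl(h_\mu(f)+\int\phi\,d\mu\bigr)=0=P(\phi_A)$, noting $\int|\phi_A|\,d\mu_A\le\|\phi\|_\infty/\mu(A)<\infty$. Alternatively, use uniqueness of invariant Gibbs measures on BIP shifts.

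\textbf{On (b).} Your summation is the accurate form of the paper's one-line bound. Since $\phi_A$ is a sum of $\H_A$ terms, what one actually gets is $V_n(\phi_A)\le\sum_{j\ge n}V_j(\phi)$, and your geometric series establishes exactly this.
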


We note that (d) also implies the quasi-Bernoulli property.

\begin{proof}
Recall that from the Markov property of our intervals, for each $P\in\P_1^A$, $F_AP= A_i$ for some $A_i$. 
By the topological transitivity of our original system, for each $A_i$ there exists $P_i\in \P_1^A$ such that $F_AP_i = A_i$.  Then also choosing arbitrary $Q_i\in \P_1^A\cap A_i$, the collection $\{P_1, \ldots, P_k, Q_1, \ldots, Q_k\}$ can take the place of the symbols in the BIP property, since if $P\in \P_1^A$ is in $A_i$ and $F_AP=A_j$, then $P_iPQ_j$ is an allowable word, so the BIP property holds, i.e., (a) follows.

Part (b) is immediate since $V_n(\phi_A)\le V_n(\phi)= O(e^{-\ell n})$.

Part (c) follows for example by \cite[Lemma 4.1]{IomTod10}.  Parts (d), (e) and (f) follow from the Kac Lemma and the fact that $\mu_{\phi_A}$, the equilibrium state for $\phi_A$, must be Gibbs as in \cite{Sar03}.    Part (g) follows by the conformality of $m_\phi$ which implies that that for any $B \subset A$ such that $F:B \to F(B)$ is a bijection, 
$$m_A(F(B))=\frac1{m_\phi(A)}m_\phi(F(B))= \frac1{m_\phi(A)} \int_B e^{-\phi_A}dm_\phi=  \int_B e^{-\phi_A}dm_A$$
i.e., $m_A$ is $\phi_A$-conformal for the first return dynamics $F_A$. 
\end{proof}

We will later consider more general conformal measures defined as for the uninduced system, in particular for potentials $\Phi=\phi_A+q\H_A$, but before we can introduce these properly, we require more information on the pressure function in the next subsection.  However, it is useful to record here that 
if $C\in \P_n^A$ then $F_A^{n}C = A_i$ for some $1\le i\le p$, so if $m_\Phi$ is $(\Phi-P(\Phi))$-conformal then
$$m_\Phi(A_i)=m_\Phi(F_A^nC)  = \int_Ce^{-S_n\Phi+nP(\Phi)}~dm_\Phi = e^{\pm \sum_{n=1}^\infty V_n(\Phi)}m_\Phi(C)e^{-S_n\Phi(x)+nP(\Phi)} \text{ for } x\in C,$$
 so 
 \begin{equation} m_\Phi(C)= e^{\pm \sum_{n=1}^\infty V_n(\Phi)}m_\Phi(A_i)e^{S_n\Phi(x)-nP(\Phi)} \text{ for } x\in C.
 \label{eq:conf}
 \end{equation}

\subsection{The shape of the pressure function}
We will next use Proposition~\ref{prop:PP} to give properties of pressure functions like $q\mapsto P(\phi_A+q\H_A)$.

\begin{prop}
There exists $K>0$ such that for any $A$, the function $q\mapsto P(\phi_A+q\H_A)$ is analytic (and finite) for $q\in \mathbb{C}$ if $|q|\le K\mu(A)$.
Moreover, if $|q|\le K\mu(A)$ then $\left|P(\phi_A+q\H_A)\right| \le 2|q|/\mu(A)$.
\label{prop:ana}
 \end{prop}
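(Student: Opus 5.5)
We analyse the induced transfer operator $\L_{A,q} g(x) := \sum_{F_A y = x} e^{\phi_A(y)+q\H_A(y)} g(y)$, acting on functions on $A$ that are, say, bounded and of bounded variation on each $A_i$ (or locally H\"older on $\Sigma^A$), and treat it as an analytic perturbation of $\L_{A,0}$. By Proposition~\ref{prop:indprops}, $\L_{A,0}$ has leading eigenvalue $e^{P(\phi_A)}=1$. Its eigenfunction is the density $\rho_A$, which is bounded above and below uniformly in $A$ by Proposition~\ref{prop:indprops}(f) and the positivity and boundedness of $\rho$. The dual eigenmeasure is $m_A$. The expansion $\L_{A,q} = \sum_k \frac{q^k}{k!}\L_{A,0}(\H_A^k\,\cdot)$ is the natural object, and $P(\phi_A+q\H_A)=\log\lambda_A(q)$, where $\lambda_A(q)$ is the leading eigenvalue.

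\textbf{Step 1: a uniform tail bound on $\H_A$.} We show that $\mu_A(\H_A>n)\le C e^{-c\,\mu(A) n}$ with $C,c$ independent of $A$. The plan is to split time into blocks of length $L\asymp 1/\mu(A)$. By the Gibbs/quasi-Bernoulli property and the uniform mixing of $f$, for any starting cylinder the conditional probability of missing $A$ during a block of length $L$ is at most some $\theta<1$, uniformly. The reason is that
\[
\mu\Bigl(\bigcup_{j<L} f^{-j}A\Bigr)\gtrsim \min\bigl(1,L\mu(A)\bigr),
\]
which follows from a second moment (Chung--Erd\H{o}s) argument using exponential decay of correlations for Markov $A$, together with bounded distortion to pass to conditional measures. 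Iterating over blocks gives the tail bound. This is the step I expect to be the main obstacle: the uniformity over $A$ of both the decay of correlations constant and the conditioning must be tracked carefully, including short returns of $A$ to itself. Equivalently, the same block estimate gives $\int e^{s\H_A}\,dm_A\le 2$ for real $0\le s\le K'\mu(A)$.

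\textbf{Step 2: analyticity and the perturbation bound.} For complex $|q|\le K\mu(A)$ with $K<K'$, Step 1 together with the Gibbs property~\eqref{eq:conf} bounds the operator norm of $\L_{A,0}(\H_A^k\,\cdot)$ by $C k!\,(c\mu(A))^{-k}$. Hence
\[
\|\L_{A,q}-\L_{A,0}\|\le \sum_{k\ge1}\frac{C\,|q|^k}{(c\mu(A))^k}\lesssim \frac{|q|}{\mu(A)},
\]
which is small once $K$ is small. Since $\L_{A,0}$ has a spectral gap uniform in $A$, by BIP and a uniform Lasota--Yorke inequality coming from bounded distortion, Kato's analytic perturbation theory gives an isolated simple eigenvalue $\lambda_A(q)$ that is analytic in $q$ with $|\lambda_A(q)-1|\lesssim |q|/\mu(A)$. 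Consequently $P(\phi_A+q\H_A)=\log\lambda_A(q)$ is analytic and finite. For real $q$ this coincides with the Gurevich/Sarig pressure, since the operator is positive.

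\textbf{Step 3: the quantitative bound.} Here we use first-order perturbation theory. The derivative at $0$ is $\frac{d}{dq}P=\int \H_A\,d\mu_A = 1/\mu(A)$ by Proposition~\ref{prop:indprops}(e). The second-order remainder is controlled by Cauchy estimates on the disc $|q|\le K'\mu(A)$, where $|\log\lambda_A|$ is bounded by a uniform constant. This gives $|P(\phi_A+q\H_A)-q/\mu(A)|\lesssim |q|^2/\mu(A)^2$. Shrinking $K$ if necessary, this remainder is at most $|q|/\mu(A)$, so $|P(\phi_A+q\H_A)|\le 2|q|/\mu(A)$.
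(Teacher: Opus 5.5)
\textbf{There is a genuine gap: the uniformity in $A$ is assumed in Step~2.} That uniformity is the whole content of the proposition, and your justification (``BIP and a uniform Lasota--Yorke inequality'') does not give it.

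\emph{The spectral gap.} A Lasota--Yorke inequality, even a uniform one, only bounds the essential spectral radius. Kato's theorem also needs the distance from $1$ to the rest of the spectrum of $\L_{A,0}$, and the resolvent norm on a circle around $1$, to be bounded independently of $A$. That is a quantitative mixing property of $F_A$, and BIP does not provide it. The BIP set built in Proposition~\ref{prop:indprops}(a) has $2p$ elements, where $p$ (the number of $k$-cylinders making up $A$) is unbounded. Some branch image $A_i$ has $m_A(A_i)\le 1/p$. These small images also spoil the standard BV Lasota--Yorke constant, since the jumps at $\partial A_i$ contribute a factor $1/\min_i m_A(A_i)$, and likewise the quasi-Bernoulli constant of $\mu_A$. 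Without a uniform gap, perturbation theory gives analyticity only on a disc whose radius depends on $A$ in an uncontrolled way, which is exactly what must be ruled out.

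\emph{From Step~1 to Step~2.} For $x\in A_j$, \eqref{eq:conf} gives $\L_{A,0}(\H_A^k)(x)\asymp m_A(A_j)^{-1}\sum_{P:F_AP=A_j}m_A(P)\H_A(P)^k$. This is a moment of the return time conditioned on the \emph{landing} piece $A_j$. Step~1 only controls $\mu_A(\H_A>n)$ summed over all landing pieces, and since $m_A(A_j)$ can be tiny, one does not imply the other. The landing-conditioned quantity is really a statement about the punctured operator, because $\sum_{F_Ay=x,\,\H_A(y)=n}e^{\phi_A(y)}=\L_{\phi,A}^{n-1}\L_\phi 1_A(x)$. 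So it needs the escape-rate input $\lambda_A\le 1-C_H\mu(A)$ used in Proposition~\ref{prop:conf}, with uniform control of the spectral projector, not the forward tail bound.

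\emph{Step~1 itself.} The event of having missed $A$ up to time $n$ is a union of cylinders of depth $n+k-1$. Conditioned on such a cylinder, the next block of length $L$ can miss $A$ with certainty when $k\gg L$. To fix this, approximate $A$ from inside by cylinders of depth $O(\log(1/\mu(A)))$ and insert gaps between blocks.

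\textbf{How the paper avoids this.} The paper never perturbs $\L_{A,q}$. It invokes \cite{Sar03} (BIP plus local H\"older continuity) to reduce analyticity to finiteness of the pressure, and by monotonicity to finiteness at $q=K\mu(A)$. Finiteness and the upper bound come from the uninduced system:
\begin{itemize}
\item Inducing the zero-pressure potential $\phi+q(1-1_A/\mu(A))-p_A(q)$ gives \eqref{eq:pressbd}.
\item Since $\int(1-1_A/\mu(A))\,d\mu=0$, a second-order estimate gives $p_A(q)\le q/2$ for $q\le\mu(A)/16$.
\item Hence $P(\phi_A+\tfrac q2\H_A)\le q/\mu(A)$.
\end{itemize}
For $q<0$, the lower bound $P(\phi_A+q\H_A)\ge q/\mu(A)$ is the variational principle with $\mu_A$, together with Proposition~\ref{prop:indprops}(e). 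All uniformity is thus pushed onto the pressure $p_A$ of the original Rychlik operator on $I$, where $1_A$ has bounded variation. No tail estimates or spectral gap for $F_A$ are needed.

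What your route would buy, once a uniform gap is established, is complex analyticity with explicit bounds on a disc of radius $\asymp\mu(A)$. That is exactly what the Cauchy estimate in Corollary~\ref{cor:var} uses, whereas the appeal to \cite{Sar03} is qualitative and does not by itself control the size of the complex neighbourhood.
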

 
\begin{proof}
By Proposition~\ref{prop:indprops}, our induced system is encoded by a BIP system, so since that proposition also says that $\phi_A$ is locally H\"older continuous,  the proof of \cite[Corollary 4]{Sar03} implies that for analyticity, we only need to show that there is $K>0$ such that $P(\phi_A+q\H_A)<\infty$ for all $|q|\le K\mu(A)$.  Moreover, monotonicity will imply that it is sufficient to prove that this is finite at $K\mu(A)$.

We first assume that $q\ge 0$.
Since, trivially, $P\left(\phi+q\left(1-\frac{1_A}{\mu(A)}\right) - p_A(q) \right)=0$, as in for example \cite[Lemma 4.1]{IomTod10}, inducing gives 
\begin{equation}
P\left(\phi_A+\H_A\left(q - p_A(q)\right)- \frac{q}{\mu(A)} \right)\le 0.
\label{eq:pressbd}
\end{equation}
Since the term $- \frac{q}{\mu(A)}$ doesn't affect finiteness, to obtain $P(\phi_A+q\H_A)<\infty$ it will be sufficient to show that $p_A(q) <q$ for sufficiently small $q$.

By the usual expansion of the pressure, see for example  \cite[Chapter 4]{ParPol90}, 
\begin{align*}
p_A(q) = P\left(\phi+q\left(1-\frac{1_A}{\mu(A)}\right)\right) &  = P(\phi)+ q \E_{\mu}\left(1-\frac{1_A}{\mu(A)}\right) + \frac{q^2D^2p_A(\hat q)}2 =  \frac{q^2D^2p_A(\hat q)}2 
\end{align*}
for some $\hat q\in (0, q)$.  Assuming that $q\le 1/2$, so likewise for $\hat q$, since  $|p_A(q)| \le |q|/\mu(A)$ for $q\in \mathbb{C}$, Cauchy's Integral Formula gives, for the contour $\gamma$ of radius $1$ centred at $\hat q$,
$$|D^2p_A(\hat q)|= \left|\frac{1}{\pi i}\oint_\gamma\frac{p_A(a)}{(a-\hat q)^3}~da\right| \le  2\int_0^1\frac{\left(\frac{|a|}{\mu(A)}\right)}{(1/2)^3}~da \le  \frac{16}{\mu(A)}.$$
So if $q\le  \mu(A)/16$, then $p_A(q)\le \frac q2$ and putting this into \eqref{eq:pressbd} we obtain
\begin{equation}
P\left(\phi_A+\frac{q}2\H_A \right) \le P\left(\phi_A+\H_A\left(q - p_A(q) \right) \right)\le  \frac{q}{\mu(A)}.
\label{eq:pressbd2}
\end{equation}

So setting $K':= \frac{1}{32 }$, we have the required finiteness for $q\le K'\mu(A)$. 

Finally we wish to find bounds on $P\left(\phi_A+q\H_A \right)$ for $q\in \mathbb{C}$ and $|q|$ sufficiently small.  If $Re(q)\ge 0$ then 
\eqref{eq:pressbd2} also implies that  if $Re(q) \le \frac{\mu(A)}{32}$ then $|P\left(\phi_A+q\H_A \right)| \le P\left(\phi_A+|q|\H_A \right)\le  2|q|/\mu(A)$.  On the other hand, if $Re(q)< 0$ we use the fact that if $q<0$ then by the Variational Principle (\cite[Theorem 3]{Sar99}),
$$P(\phi_A+q\H_A) \ge h_{\mu_{\phi_A}}+ \int \phi_A +q\H_A~d\mu_{\phi_A} = q\int \H_A~d\mu_{\phi_A} = \frac{q}{\mu(A)}.$$
Hence  $|P\left(\phi_A+q\H_A \right)| \le |P\left(\phi_A-|q|\H_A \right)|\le  |q|/\mu(A)$.  So we are finished if we set $K=K'$.
\end{proof}

\begin{cor} 
There is $C_P>0$ such that for $q<K \mu(A)/2$,
$$P\left(\phi_A+q\left(\H_A-\frac1{\mu_\phi(A)}\right)\right)<
 q^2\frac{C_P}{\mu(A)^2}.$$
\label{cor:var}
\end{cor}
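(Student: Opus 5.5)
Set $g(q):=P\bigl(\phi_A+q\H_A\bigr)-\frac{q}{\mu(A)}$. Since $\frac{q}{\mu(A)}$ is a constant shift of the potential, $g(q)=P\bigl(\phi_A+q(\H_A-\frac1{\mu(A)})\bigr)$, which is exactly the quantity in Corollary~\ref{cor:var}. The plan is to Taylor expand $g$ to second order at $q=0$ and control the second derivative by a Cauchy estimate, using the analyticity and the size bound from Proposition~\ref{prop:ana}.

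First we identify the low-order terms. By Proposition~\ref{prop:indprops}(c), $g(0)=P(\phi_A)=0$. Proposition~\ref{prop:ana} makes $q\mapsto P(\phi_A+q\H_A)$ analytic on the disc $|q|\le K\mu(A)$. The standard derivative formula for BIP systems with locally H\"older potentials (\cite{Sar03}, as in the proof of Proposition~\ref{prop:ana}) gives
\[
g'(0)=\int \H_A\,d\mu_{\phi_A}-\frac1{\mu(A)}.
\]
By Proposition~\ref{prop:indprops}(d),(e), $\mu_{\phi_A}=\mu_A$ and $\E_{\mu_A}\H_A=1/\mu(A)$, so $g'(0)=0$. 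Taylor's theorem with Lagrange remainder then gives, for real $0<q<K\mu(A)/2$,
\[
g(q)=\frac{q^2}{2}\,g''(\hat q)\qquad\text{for some }\hat q\in(0,q).
\]
(For $q\le 0$ the same expansion works verbatim; if the statement is meant only for $q\ge0$, nothing extra is needed.)

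Next we bound $g''(\hat q)$ uniformly in $A$. Note that $g''=\frac{d^2}{dq^2}P(\phi_A+q\H_A)$, since the linear term contributes nothing. Take the circle $\gamma$ of radius $r=K\mu(A)/2$ centred at $\hat q$. Because $|\hat q|<K\mu(A)/2$, the circle lies in the disc $|a|\le K\mu(A)$. On $\gamma$, Proposition~\ref{prop:ana} gives
\[
|P(\phi_A+a\H_A)|\le \frac{2|a|}{\mu(A)}\le \frac{2K}{\mu(A)}\cdot\mu(A)=2K.
\]
Cauchy's formula then yields
\[
|g''(\hat q)|\le \frac{2!\,\sup_\gamma|P|}{r^2}\le \frac{4K}{(K\mu(A)/2)^2}=\frac{16}{K\mu(A)^2}.
\]
Hence $g(q)\le \frac{8}{K}\,\frac{q^2}{\mu(A)^2}$, so $C_P=8/K$ works (any larger constant gives the strict inequality).

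The main point to check is the derivative formula $g'(0)=\int\H_A\,d\mu_{\phi_A}$, that is, that the equilibrium state of $\phi_A$ is $\mu_A$ and that derivatives of pressure are given by integrals in this countable-alphabet setting. This follows from Sarig's theory once $P(\phi_A+q\H_A)$ is finite near $0$, which Proposition~\ref{prop:ana} supplies. Beyond that, we only need to make sure the Cauchy circle stays inside the domain where Proposition~\ref{prop:ana} applies; halving the radius, which is the reason for the hypothesis $q<K\mu(A)/2$, guarantees this.
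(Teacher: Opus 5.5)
Your proof is correct and follows the paper's argument. Both expand to second order at $q=0$, kill the linear term using the equilibrium state $\mu_A$ and Kac's lemma ($\E_{\mu_A}\H_A=1/\mu(A)$), and bound the second derivative by a Cauchy estimate based on Proposition~\ref{prop:ana}.

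Your version is slightly tidier in two places:
\begin{itemize}
\item You apply the bound $|P(\phi_A+a\H_A)|\le 2|a|/\mu(A)$ to the unshifted pressure. The shift $-a/\mu(A)$ is linear, so it contributes nothing to the second derivative.
\item You take the Cauchy circle of radius $K\mu(A)/2$, which stays inside the disc where Proposition~\ref{prop:ana} applies.
\end{itemize}
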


\begin{proof}
Write $P_A(q):= P\left(\phi_A+q\left(\H_A-\frac{1}{\mu(A)}\right)\right)$. Proposition~\ref{prop:ana}  implies that this function is analytic for  $q<K\mu(A)/2$, so again by  \cite[Chapter 4]{ParPol90} there is a Taylor series expansion
\begin{align*}
P\left(\phi_A+q\left(\H_A-\frac{1}{\mu(A)}\right)\right) &  = P(\phi_A)+ q \E_{\mu_A}\left(\H_A-\frac{1}{\mu(A)}\right) + \frac{q^2D^2P_A(\hat q)}2 =\frac{q^2D^2P_A(\hat q)}2
\end{align*}
 for some $\hat q\in (0, q)$, where the last part follows by Proposition~\ref{prop:indprops}(e) and the Kac Lemma.
Using Cauchy's Integral Formula, where our contour $\gamma$ is of radius $K\mu(A)$, since Proposition~\ref{prop:ana} implies  $|P_A(a)| \le 2|a|/\mu(A)$ for $a\in \gamma$, we can write 
$$\left|D^2P_A(\hat q)\right|= \left|\frac{1}{\pi i}\oint_\gamma \frac{P_A(a)}{(a- \hat q)^{3}}~da\right|\le 2 K\mu(A) \frac{\left(\frac{2K\mu(A)}{\mu(A)}\right)}{\left(\frac K2\mu(A)\right)^3} = \frac{32}{ K\mu(A)^2}.$$
So we conclude by setting $C_P:= 16/K$.
\end{proof}

\subsection{Conformal measures and densities for perturbed potentials}

Suppose that $(A, F_A)$ are as in Proposition~\ref{prop:indprops}.  To save notation we will write our `distortion constant' as
$$C_D=C_D(\phi) := e^{\sum_{k=1}^nV_n(\phi)},$$
and note as in the proof of Proposition~\ref{prop:indprops}(b), $e^{\sum_{k=1}^nV_n(\phi_A)}\le C_D$.

\begin{lma}
Given $A\subset I$, for the range of $q$ for which $\mu_{\phi_A+q\H_A}$ exists, for 
$$\rho_{A, q}(x):= \frac{d\mu_{\phi_A+q\H_A}}{dm_{\phi_A+q\H_A}}(x),$$
then  $\frac1{C_D^2}\le \rho_{A,q}\le C_D^2$.
\label{lem:densbd} 
\end{lma}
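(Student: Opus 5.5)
The plan is to use the standard representation of the density of an equilibrium state relative to the conformal measure as a limit of normalised transfer operator iterates, combined with the distortion estimate \eqref{eq:conf}. Write $\Phi=\phi_A+q\H_A$ and $P=P(\Phi)$, and let $m_\Phi$ be the $(\Phi-P)$-conformal measure and $\mu_\Phi$ the equilibrium state. The Gibbs measures involved are equivalent, and by Proposition~\ref{prop:indprops}(a) the system is BIP. Hence $\rho_{A,q}$ is the eigenfunction of $\L_{\Phi-P}$, and we may take it to be
\[
\rho_{A,q}(x)=\lim_{n\to\infty}\frac1n\sum_{k=0}^{n-1}\L^k_{\Phi-P}\mathbf 1(x),
\]
normalised so that $\int\rho_{A,q}\,dm_\Phi=1$ (Sarig's Ruelle–Perron–Frobenius theorem for BIP shifts). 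So it suffices to bound $\L^n_{\Phi-P}\mathbf 1(x)$ above and below, uniformly in $n$ and $x$.

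The key steps run as follows. First, note that $\H_A$ is constant on elements of $\P^A_1$. Therefore $V_n(\Phi)=V_n(\phi_A)$, and the distortion constant for $\Phi$ is again at most $C_D$; this is why the bound is uniform in $q$. Second, expand
\[
\L^n_{\Phi-P}\mathbf 1(x)=\sum_{C\in\P_n^A,\ F_A^nC\ni x}e^{S_n\Phi(y_C)-nP},
\]
where $y_C$ is the preimage of $x$ in $C$. By \eqref{eq:conf}, each term equals $C_D^{\pm1}\,m_\Phi(C)/m_\Phi(A_i)$, where $A_i=F_A^nC$. Third, for each fixed $i$, sum over all $C\in\P_n^A$ with $F_A^nC=A_i$ and compare this with the quantity $\int \L^n_{\Phi-P}\mathbf 1\,dm_\Phi=1$. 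By conformality,
\[
\int_{A_j}\L^n_{\Phi-P}\mathbf 1\,dm_\Phi=m_\Phi\Bigl(\bigcup\{C: F_A^nC=A_j\}\Bigr).
\]
Distortion across $A_j$ gives $\L^n_{\Phi-P}\mathbf 1(x)=C_D^{\pm1}\,m_\Phi(\{F_A^n\in A_j\})/m_\Phi(A_j)$ for $x\in A_j$. Averaging then yields values within $C_D^{\pm2}$ of each other and of their mean, and the mean is $1$ after normalisation. Passing to the Cesàro limit gives $C_D^{-2}\le\rho_{A,q}\le C_D^2$.

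The main obstacle is the third step. The naive argument only controls $\L^n\mathbf 1$ up to the ratio $m_\Phi(\{F_A^n\in A_j\})/m_\Phi(A_j)$, which a priori varies with $j$. If $p>1$ pieces $A_j$ occur, comparing different $j$ would cost a further quasi-Bernoulli constant rather than $C_D$. To handle this, I would first try to exploit that $F_A$ maps onto whole pieces and compare via one extra step: write $\L^{n+1}=\L\L^n$ and use BIP symbols to show that the ratio is $C_D$-comparable across $j$. If this fails, I would accept a constant of the form $C_D^2$ times a fixed quasi-Bernoulli factor, which suffices for the later uniform estimates. In the single-piece case $p=1$ the argument above is clean and gives exactly $C_D^{\pm2}$.
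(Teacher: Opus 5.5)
Your within-piece argument is sound and is essentially the paper's first step. Every branch of $F_A^n$ maps onto a whole $A_i$, and $V_n(\phi_A+q\H_A)=V_n(\phi_A)$; your remark that $\H_A$ is constant on elements of $\P^A_1$ is exactly why $C_D$ does not depend on $q$. Hence $\L^n_{\Phi-P}\mathbf 1(x)$ and $\L^n_{\Phi-P}\mathbf 1(y)$ agree up to $C_D^{\pm1}$ for $x,y$ in the same $A_i$. For $p=1$ this, together with $\int\rho_{A,q}\,dm_\Phi=1=m_\Phi(A)$, proves the lemma, even with $C_D^{\pm1}$.

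The genuine gap is the comparison between different pieces: your third step asserts it and your last paragraph retracts it. It cannot be closed with the constant $C_D^2$, because for $p>1$ that statement is false. Take the doubling map with $\phi\equiv-\log 2$, so $\mu=m=\mathrm{Leb}$, every $V_n(\phi)=0$ and $C_D=1$. Let $A=[0^m001]\cup[0^m010]=[2^{-m-3},3\cdot 2^{-m-3})$ for any $m\ge0$, so $A$ can be as small as you like.
\begin{itemize}
\item $\Phi=(q-\log 2)\H_A$ is constant on first-return branches, so $\rho_{A,q}$ equals a constant $\rho_i$ on each $A_i$.
\item The eigenvalue equation shows that $\rho_1=\rho_2$ would force $G_1(e^q/2)=G_2(e^q/2)$, where $G_i(z):=\sum_{C\in\P^A_1,\,F_AC=A_i}z^{\H_A(C)}$.
\item Both series converge near $z=1/2$ (return times have exponential tails) and equal $1$ there, since Lebesgue on $A$ is $F_A$-invariant.
\item The only branch with return time $1$ is $[0^{m+2}10]$, which lands in $A_2$. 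So $G_1-G_2=-z+O(z^2)\not\equiv0$, and its zero at $z=1/2$ is isolated.
\item Hence $\rho_1\neq\rho_2$ for all sufficiently small $q\neq0$. Since $m_\Phi(A_1)\rho_1+m_\Phi(A_2)\rho_2=1=m_\Phi(A_1)+m_\Phi(A_2)$, we get $\max_i\rho_i>1=C_D^2$.
\end{itemize}
In particular, no ``one extra step with BIP symbols'' argument can give $C_D$-comparability across pieces.

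Your fallback has the right shape: the proof of Proposition~\ref{prop:ChaLep} only needs two-sided bounds on $\rho_{A,q}$ that are uniform over small Markov intervals $A$ and $0\le q\le K\mu(A)$. As stated, though, it is not a proof. The extra factor cannot be the quasi-Bernoulli constant of $\mu$, since that constant is $1$ in the example above. A quasi-Bernoulli constant for $\mu_{\Phi}$ that is uniform in $A$ and $q$ has not been established anywhere. It would itself require uniform control of $\rho_{A,q}$ of exactly the kind you are trying to prove. What is missing is an estimate, uniform in $A$ and $q$, on how the weights $e^{q\H_A}$ are distributed over the branches landing in the different $A_j$.

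The paper's own cross-piece step does not supply this either. It pushes an interval $U\ni x$ in $A_i$ into $A_j$ by $F_A^b$ and writes $\mu_\Phi(F_A^bU)=\int\L^b_{\Phi-P}(\mathbf 1_U\rho_{A,q})\,dm_\Phi$. But the right-hand side equals $\mu_\Phi(U)$, because other inverse branches of $F_A^b$ also hit $F_A^bU$. Also, conformality gives $m_\Phi(F_A^bU)=\int_Ue^{-S_b\Phi+bP}\,dm_\Phi$, with the opposite sign in the exponent to the paper's. Once these are corrected, the computation only compares $\rho_{A,q}$ on $U$ with itself and says nothing about $\rho_{A,q}$ on $A_j$.

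So you located the difficulty correctly, but the proposal establishes the lemma only when $p=1$.
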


\begin{proof}
As in for example \cite[Section 5]{MauUrb01},
$$\rho_{A, q}(x) = \lim_{n\to\infty} e^{-nP(\phi_A+q\H_A)} (\L_{\phi_A+q\H_A}^n 1)(x).$$
Suppose that $x, y\in A_i$ for $1\le i\le p$ and observe that if $P\in \P_n^A$ has $F_A^nP=A_i$ then there are exactly two points $x_P, y_P\in P$ such that $F_A^n(x_P)=x$ and $F_A^n(y_P)=y$.  Then
\begin{align*}
\frac{\rho_{A, q}(x)}{\rho_{A, q}(y)} & =\frac{\lim_{n\to \infty} e^{-nP(\phi_A+q\H_A)} \sum_{P\in \P_n^A}e^{S_n\left(\phi_A+q\H_A\right)(x_P)}}{\lim_{n\to \infty} e^{-nP(\phi_A+q\H_A)} \sum_{P\in \P_n^A}e^{S_n\left(\phi_A+q\H_A\right)(y_P)}}
= \lim_{n\to \infty} \frac{ \sum_{P}e^{S_n\left(\phi_A+q\H_A\right)(x_P)}}{\sum_{P}e^{S_n\left(\phi_A+q\H_A\right)(y_P)}}\\ 
&= 1 + \lim_{n\to \infty} \frac{ \sum_{P}e^{S_n\left(\phi_A+q\H_A\right)(y_P)}\left(e^{S_n\left(\phi_A+q\H_A\right)(x_P)- S_n\left(\phi_A+q\H_A\right)(y_P)}-1\right)}{\sum_{P}e^{S_n\left(\phi_A+q\H_A\right)(y_P)}}\\
&=1+ \left(C_D^{\pm}-1\right)= C_D^{\pm}.
\end{align*}
  
 It remains to prove that $\rho_{A, q}$ doesn't vary too much between the $A_i$'s.  Given $1\le i, j\le p$, if $x\in A_i$ then there is some $b\ge 1$ such that $F_A^b(x) \in A_j$.  Let  $U\ni x$ be an interval in $A_i$ such that $F_A: U\to F_A^b(U)\subset A_j$ is a bijection.  Then we estimate $\rho_{A, q}$ on $A_j$ by 
\begin{align*}
\frac{\mu_{A, q}(F_A^n(U))}{m_{A, q}(F_A^n(U))} &= \frac{\int \L_{A, q}^b (1_U\cdot \rho_{A, q})~dm_{A, q}}{\int_U e^{S_b \left(\phi_A+q\H_A\right)-bP}~dm_{A, q}}= \frac{\int_U e^{S_b \left(\phi_A+q\H_A\right)-bP}\rho_{A, q}~dm_{A, q}}{\int_U e^{S_b \left(\phi_A+q\H_A\right)-bP}~dm_{A, q}}= C_D^{\pm}\rho_{A, q}(x).
\end{align*}
 This implies that $\frac1{C_D^2}\le \rho_{A, q}\le C_D^2$.
\end{proof}

Now let $\Q^A_n=\cup_i \Q^{A, i}_n$ be the partition of $I$, up to sets of measure zero, such that for each $Q\in \Q^{A, i}_n$, $\H_A^n(x)$ is constant on $Q$ and $f^{\H_A^n|_Q}(Q) =A_i$.  Let $\P^{A, i}_n\subset \P_n^A$ be the elements of $ \Q^{A, i}_n$  in $A$.  We extend $\phi_A$ outside $A$ so that if $x\in Q\in  \Q^{A}_1\sm \P_1^A$ then $\phi_A(x) = S_{\H_A(x)}\phi(x)$.

In the following result we will apply theory from \cite{JurTod25} which comes from systems with holes, hence the notation $C_H$ below.

\begin{prop}
For all $0\le q\le K\mu(A)$ there is $m_{\phi_A+q\H_A}$, a $\left(\phi_A+q\H_A-P\left(\phi_A+q\H_A\right)\right)$-conformal measure for the induced system on $A$.  Moreover, this extends to a measure outside $A$ such that if $Q\in \Q^{A, i}_n$ and $U\subset Q$ is an interval, then 
\begin{equation}
m_{\phi_A+q\H_A}(f^{\H_A^n}(U)) = \int_Ue^{-S_n\left(\phi_A+q\H_A\right) + nP\left(\phi_A+q\H_A\right)} dm_{\phi_A+q\H_A}.
\label{eq:confex}
\end{equation}
There also exist $C_m\ge 1$ and $C_H>0$ depending only on $(I, f, \phi)$ and $\lambda_A\in (0, 1-C_H\mu(A))$ such that for all $1\le i\le p$ and any $\n\in \N$,  if $\lambda_Ae^q<1$ then
$$\sum_{Q\in \Q^{A, i}_n\sm \P_n^{A, i}} m_{\phi_A+q\H_A}(Q)\le \frac{C_m}{1-\lambda_Ae^q}  \sum_{P\in \P^{A, i}_{n-1}}m_{\phi_A+q\H_A}(P).$$
\label{prop:conf}
\end{prop}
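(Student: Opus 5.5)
We will obtain the conformal measure on $A$ from the countable-state theory of Sarig and Mauldin--Urba\'nski, and then extend it outside $A$ by pulling back along first hits. For $0\le q\le K\mu(A)$, Proposition~\ref{prop:ana} gives $P(\phi_A+q\H_A)<\infty$. Proposition~\ref{prop:indprops} shows that the induced system has the BIP property and a locally H\"older potential. Hence \cite{Sar03} yields a $(\phi_A+q\H_A-P(\phi_A+q\H_A))$-conformal measure $m_{\phi_A+q\H_A}$ on $A$, namely the eigenmeasure of the dual transfer operator.

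To extend it, let $Q\in\Q^{A,i}_n\sm\P^{A,i}_n$ with $g:=f^{\H_A^n|_Q}:Q\to A_i$ a bijection. We define $m_{\phi_A+q\H_A}|_Q$ as the pullback of $m_{\phi_A+q\H_A}|_{A_i}$ by $g$, with density $e^{S_n(\phi_A+q\H_A)-nP}\circ g^{-1}$, using the extended $\phi_A$ and $\H_A$. The elements of $\Q^A_n$ refine compatibly as $n$ varies, because a point outside $A$ first enters $A$ and only then follows the induced dynamics. This compatibility, together with the cocycle property of $S_n$, makes the definitions consistent, and \eqref{eq:confex} then holds by construction.

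For the tail estimate, split each $Q\in\Q^{A,i}_n\sm\P^{A,i}_n$ according to its first entry into $A$. Write $Q=W\cap f^{-j}(P)$, where $j=\H_A|_Q$ on the first leg, $P\in\P^{A,i}_{n-1}$, and $W$ is a $j$-cylinder whose iterates $W,\dots,f^{j-1}W$ avoid $A$. By \eqref{eq:confex} and bounded distortion (constant $C_D$),
\[
m_{\phi_A+q\H_A}(Q)\le C\,e^{S_j\phi+qj-P}\,m_{\phi_A+q\H_A}(P).
\]
Summing over the first-leg cylinders of length $j$ that avoid $A$ gives, up to constants, $e^{qj}\,\|\L_{\phi,A}^j 1\|_\infty$, where $\L_{\phi,A}$ denotes the transfer operator of the open system with hole $A$. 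The factor $e^{-P}\le 1$ because $P\ge 0$ for $q\ge 0$.

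The key input, and the expected main obstacle, is a uniform open-system estimate $\|\L_{\phi,A}^j1\|_\infty\le C_m\lambda_A^j$ with $\lambda_A\le 1-C_H\mu(A)$, where $C_H$ depends only on $(I,f,\phi)$. We plan to import this from \cite{JurTod25} via Rychlik's perturbation theory: for a Markov interval hole, the leading eigenvalue of the punctured operator satisfies $1-\lambda_A\asymp\mu(A)$ uniformly. The point is that a hole of size $\mu(A)$ is a small perturbation in the $BV$ norm, so the spectral gap persists uniformly in $A$.

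Granting this bound, summing the geometric series $\sum_j(\lambda_Ae^q)^j\le(1-\lambda_Ae^q)^{-1}$ and then over $P\in\P^{A,i}_{n-1}$ gives the claimed inequality, after absorbing the distortion constants into $C_m$.
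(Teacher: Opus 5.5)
Your proposal is correct and follows essentially the same route as the paper. The shared steps are: existence on $A$ from Sarig's theory using Proposition~\ref{prop:ana}; extension outside $A$ by pulling back along first entry; identifying the sum over first-entry pieces above a fixed $P\in\P^{A,i}_{n-1}$ with $e^{qj}\L_{\phi,A}^j1$ on $P$; and a geometric series driven by the punctured-operator estimate imported from \cite{JurTod25}.

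Your form of that input, $\|\L_{\phi,A}^j1\|_\infty\le C_m\lambda_A^j$, is actually the accurate one. The paper's one-step pointwise bound $\L_{\phi+q,A}1\le\lambda_Ae^q$ does not follow from $\lambda_A$ being the leading eigenvalue, so a multiplicative constant is needed. However, the uniformity in $A$ does not come from the hole being a small perturbation in the $BV$ operator norm. It comes from a Keller--Liverani-type argument: $\L_\phi-\L_{\phi,A}$ is small only as an operator from $BV$ to $L^1$, combined with a uniform Lasota--Yorke inequality.
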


We will choose the normalisation of these measures to be such that $m_{\phi_A+q\H_A}(A)=1$.  For $q=0$ this means that as in Propsition~\ref{prop:indprops},  $m_{\phi_A} = m_\phi/m_\phi(A)$.   Note the final condition in the proposition, $\lambda_Ae^q<1$, follows if $q<C_H\mu(A)$.

\begin{proof}
First note that by Proposition~\ref{prop:ana}, $P\left(\phi_A+q\H_A\right)<\infty$ and the existence of 
$m_{\phi_A+q\H_A}$ on $A$ follows for example by \cite[Theorem 4]{Sar99}.  The extension outside $A$ is defined via \eqref{eq:confex}.

To estimate the total mass of the extended measure we use results of \cite[Section 4]{JurTod25}.  In particular we define the punctured operator $\L_{\phi, A}: \mathfrak{B} \to \mathfrak{B}$ by 
$$(\L_{\phi, A}g)(x) = \sum_{f(y)= x, y\notin A} e^{\phi(y)}g(y)$$
 with leading eigenvalue $\lambda_A$ where $\lambda_A\in (0, 1-C_H\mu(A))$.   Note that this implies that for any $x\in I$,
 \begin{equation}
 (\L_{\phi+q, A}1)(x) =  \sum_{f(y)= x, y\notin A} e^{\phi(y)+q}\le \lambda_A e^q.
 \label{eq:perttrans}
 \end{equation}
 
Given $i$ and $Q\in \Q^{A, i}_n\sm \P_n^{A, i}$, there is a unique $P\in \P^{A, i}_{n-1}$ such that $f^{ \H_A|_Q}(Q) = P$.  We can also write
 \begin{align*} m_{\phi_A+q\H_A}(Q)&= m_{\phi_A+q\H_A}(P)e^{(\phi_A+q\H_A)(x)-P(\phi_A+q\H_A)} \\
 &\le C_D m_{\phi_A+q\H_A}(P)e^{S_{\H_A}(\phi+q)(x)} 
\end{align*}
 for any $x\in Q$.  Summing over all such $Q$ (for a given $P$) gives a sum bounded by
 $$C_D^{2}\sum_{j\ge 1} \int_P \L_{\phi+q, A}^j1~dm_{\phi_A+q\H_A}\le C_D^{2}\sum_{j\ge 1} \left(\lambda_A e^q\right)^jm_{\phi_A+q\H_A}(P)$$
 by \eqref{eq:perttrans}.  So if $\lambda_Ae^q<1$, this is uniformly bounded, indeed we deduce that there is $C_m\ge 1$ such that for each $i$,
 $$\sum_{Q\in \Q^{A, i}_n\sm \P_n^{A, i}}m_{\phi_A+q\H_A}(Q)\le \frac{C_m}{1-\lambda_Ae^q} \sum_{P\in \P^{A, i}_{n-1}}m_{\phi_A+q\H_A}(P),$$
 as required.
\end{proof}

\subsection{Proof of the large deviation result}

\begin{proof}[Proof of Proposition~\ref{prop:ChaLep}]
We start by adjusting $K$ if necessary, so that $K\le C_H$ for $C_H$ as in Proposition~\ref{prop:conf}.  Throughout the proof we assume $0\le q\le K\mu(A)$ and that $\mu(A)<1/C_H$.

Let $J_{n, i}^+(\eta)$ denote the collection of elements $Q\in \Q^{A, i}_n$ containing some $x$ such that $S_n\H_A(x)>n\left(\frac{1}{\mu(A)}+\eta\right)$: since $S_n\H_A$ is constant on these cylinders, this will in fact be true for any $x\in Q\in J_{n, i}^+(\eta)$, so  $$S_n\H_A(x) -n\eta- n\frac{1}{\mu(A)}= S_n\left(\H_A(x) -\eta- \frac{1}{\mu(A)}\right)>0.$$

 Then since $m_{\phi_A}$ is the normalised version of $m_{\phi}$, 
 for $q>0$, by \eqref{eq:conf} and Proposition~\ref{prop:conf} (and letting $x_Q$ be an arbitrary point in a given set $Q$), 
  \begin{align*}
m_{\phi}\left(S_n\H_A>n\left(\frac{1}{\mu(A)}+\eta\right)\right) &  \le {m_\phi(A)}\sum_{i}\sum_{Q\in J_{n, i}^+(\eta)}m_{\phi_A}(Q)\\
&
\hspace{-3.5cm} \le C_D{m_\phi(A)}\sum_{i}m_{\phi_A}(A_i)\sum_{Q\in J_{n, i}^+(\eta)}e^{S_n\phi_A(x_Q)}\\
 &\hspace{-3.5cm} \le  C_D{m_\phi(A)}\sum_{i}m_{\phi_A}(A_i)\sum_{Q\in J_{n, i}^+(\eta)}e^{q\left(S_n\left(\H_A-\eta-\frac{1}{\mu(A)}\right)(x_Q)\right)+S_n\phi_A(x_Q)}\\
    &\hspace{-3.5cm} \le C_D {m_\phi(A)}  e^{-nq\left(\eta+\frac{1}{\mu(A)}\right)} \sum_{i}m_{\phi_A}(A_i)
    \sum_{Q\in \Q^{A, i}_n}e^{S_n\left(\phi_A+q\H_A\right)(x_Q)}\\
  & \hspace{-3.5cm} \le C_D^2{m_\phi(A)} e^{n\left(P(\phi_A+q\H_A)-\frac{q}{\mu(A)}-q\eta\right)}\sum_{i} \frac{m_{\phi_A}(A_i)}{m_{\phi_A+q\H_A}(A_i)} \sum_{Q\in  \Q^{A,i}_n}m_{\phi_A+q\H_A}(Q) \\ 
    & \hspace{-3.5cm} \le  \frac{C_mC_D^2{m_\phi(A)}}{1-\lambda_Ae^q} e^{n\left(P(\phi_A+q\H_A)-\frac{q}{\mu(A)}-q\eta\right)}\sum_{i} \frac{m_{\phi_A}(A_i)}{m_{\phi_A+q\H_A}(A_i)} \sum_{P\in  \P^{A,i}_n\cup \P^{A, i}_{n-1}}m_{\phi_A+q\H_A}(P) \\ 
   & \hspace{-3.5cm} \le  \frac{C_mC_D^6{m_\phi(A)}}{1-\lambda_Ae^q}  e^{n\left(P(\phi_A+q\H_A)-\frac{q}{\mu(A)}-q\eta\right)}\\
   &\hspace{0cm} \sum_{i} \frac{m_{\phi_A}(A_i)}{\mu_{\phi_A+q\H_A}(A_i)} \left(\mu_{\phi_A+q\H_A}(F_A^{-n}A_i)+ \mu_{\phi_A+q\H_A}(F_A^{-(n-1)}A_i)\right) \\ 
   &\hspace{-3.5cm}\le \frac{2C_mC_D^6{\mu(A)}}{1-\lambda_Ae^q}  e^{n\left(P\left(\phi_A+q\left(\H_A-\frac{1}{\mu(A)}\right)\right)-q\eta\right)} \end{align*}
where in the sixth inequality we used Proposition~\ref{prop:conf} (since  $q\le C_H\mu(A)$ so $\lambda_Ae^q<1$) and in the seventh we used Lemma~\ref{lem:densbd}. 

We now estimate $P\left(\phi_A+q\left(\H_A-\frac{1}{\mu(A)}\right)\right)-q\eta$.
Let $\eta<\frac{KC_P}{\mu(A)}$ and set $q=\frac{\eta \mu(A)^2}{2C_P}<K\mu(A)/2$ where $C_P$ is as in Corollary~\ref{cor:var}.  Then Corollary~\ref{cor:var} implies 
\begin{align*}
P\left(\phi_A+q\left(\H_A-\frac{1}{\mu(A)}\right)\right)-q \eta& <q\left(-\eta+ q\frac{C_P}{\mu(A)^2}\right)=-\frac{\eta^2 \mu(A)^2}{2C_P}.
\end{align*}

To find a uniform bound on $\frac{\mu(A)}{1-\lambda_Ae^q}\le \frac{\mu(A)}{1-(1-C_H\mu(A))e^q}$, note that if $q\le C_H\mu(A)/2$, then 
$$1-(1-C_H\mu(A))e^q\ge  \frac{C_H\mu(A)}3 $$
for $\mu(A)<1/C_H$.   So recalling that $K\le C_H$, this means that $\frac{\mu(A)}{1-\lambda_Ae^q}\le \frac3{C_H}$.

Finally,
\begin{align*}
\mu_\phi\left(S_n\H_A>n\left(\frac1{\mu(A)}+\eta\right)\right) &\leq  C_D^2 m_\phi\left(S_n\H_A>n\left(\frac1{\mu(A)}+\eta\right)\right),
\end{align*}
which completes the proof for $c_1=KC_P$, $c_2= \frac1{2C_P}$ and $c_3 = \frac6{C_H}C_mC_D^8$.
\end{proof}

\section{Multifractal proofs}\label{mfsection}

In this section we prove Proposition \ref{prop:mftoolintro}. Namely we show that an $(\epsilon, \delta)$-multifractal discretisation $\A_{\delta,\epsilon}$ can be constructed for which $\sum_ {A\in \A_{\delta,\epsilon}} e^{-\mathbf{C}_\delta\mu(A)}$ can be bounded uniformly over all small scales $\delta$ (for $\mathbf{C}_\delta=p\delta^{-D}\log(1/\delta)$ with an appropriate choice of $p$).

Before getting to that proof, we define the
\emph{coarse multifractal spectrum of $\mu$}
\[
\mathcal{F}_\mu(\alpha)
   := \lim_{r \to 0}\,
     \limsup_{\delta \to 0}
     \frac{\log N_\delta(\alpha,r)}{-\log \delta},
\]
where $N_\delta(\alpha,r)$ denotes the number of boxes in the standard
$\delta$-grid $G_\delta$ whose $\mu$-mass lies in the range $[\delta^\alpha,\delta^{\alpha-r}] $:
\[
N_\delta(\alpha,r)
   := \#\bigl\{ G\in G_\delta :
        \mu(G)\in[\delta^\alpha,\delta^{\alpha-r}] \bigr\}.
\]

Note that the constants $d$ and $D$ defined in \eqref{minkowski} satisfy
\[
d=\inf\{\alpha\ge0 : \mathcal{F}_\mu(\alpha)>0\} \;\; \;\textnormal{and} \;\;\; D=\sup\{\alpha\ge0 : \mathcal{F}_\mu(\alpha)>0\}.
\]

\begin{lma}\label{mfacip}
$\mathcal{F}_\mu(D)=1$ if and only if $\mu$ is the absolutely continuous
invariant probability measure (acip) for $f$.
\end{lma}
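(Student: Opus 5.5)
The plan is to prove the two implications separately. The backward implication is a direct computation. For the forward implication I will not compute $\mathcal F_\mu(D)$ exactly. Instead I will bound it above by a Legendre-type moment estimate at negative moments, and then use strict convexity of the $L^q$-spectrum to show that this bound is $<1$ unless $\mu$ is the acip.

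\emph{($\Leftarrow$).} Suppose $\mu$ is the acip. Then $\phi$ is cohomologous to $-\log|f'|$, so $m_\phi$ is Lebesgue and $d\mu=\rho\,dm$. From Section~\ref{sec:setup}, $\rho$ is strictly positive and bounded; I will check that it is also bounded away from zero, which follows from bounded distortion. Hence $\mu(G)\asymp \delta$ uniformly over all boxes $G\in G_\delta$. This gives $d=D=1$, and for every $r>0$ and all small $\delta$ we get $N_\delta(1,r)\asymp \delta^{-1}$. Therefore $\mathcal F_\mu(1)=1$.

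\emph{($\Rightarrow$), setting up the moments.} For $q\in\R$, define $\beta(q)$ by $P(q\phi-\beta(q)\log|f'|)=0$. By Bowen's argument and the analyticity of pressure, $\beta$ is analytic, convex and decreasing in $q$, with $\beta(1)=0$ (since $P(\phi)=0$) and $\beta(0)=1$. Using the Gibbs property, bounded distortion and the quasi-Bernoulli property, I will show that $\sum_{G\in G_\delta}\mu(G)^q\lesssim_q \delta^{-\beta(q)}$ for $q\le 0$. Each grid box is comparable to a bounded number of cylinders of diameter $\asymp\delta$, in the same way as in the proof of Lemma~\ref{lem:weakreg}. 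For $q\le0$ each box $G$ counted by $N_\delta(D,r)$ satisfies $\mu(G)^q\ge \delta^{(D-r)q}$, so Chebyshev gives $N_\delta(D,r)\,\delta^{(D-r)q}\le \sum_G\mu(G)^q$. Letting $\delta\to0$ and then $r\to0$ yields
\[
\mathcal F_\mu(D)\le Dq+\beta(q)\qquad\text{for all } q\le 0 .
\]

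\emph{($\Rightarrow$), conclusion.} Set $\alpha_0:=-\beta'(0)$. This is the ratio $\int -\phi\, d\nu/\int\log|f'|\,d\nu$, where $\nu$ is the acip, and it lies in $[d,D]$. Expanding at $q=0^-$ gives $Dq+\beta(q)=1+q(D-\alpha_0)+O(q^2)$. So if $\alpha_0<D$, taking $q$ small and negative gives $\mathcal F_\mu(D)<1$. It remains to show that $\alpha_0=D$ forces $\mu$ to be the acip. By convexity $-\beta'(q)\ge\alpha_0$ for $q\le0$, and the standard identification gives $D=\lim_{q\to-\infty}-\beta(q)/q$. So $\alpha_0=D$ forces $\beta'$ to be constant on $(-\infty,0]$. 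By analyticity $\beta$ is then affine, so $\beta''\equiv0$. By the usual variance characterisation (Parry--Pollicott), this means $\phi+\alpha_0\log|f'|$ is cohomologous to a constant. Since $P(\phi)=0$ and $P(-\log|f'|)=0$, that constant is $0$ and $\alpha_0=1$. Hence $\mu$ is the equilibrium state of $-\log|f'|$, namely the acip.

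The main obstacle is the uniform moment bound $\sum_G\mu(G)^q\lesssim\delta^{-\beta(q)}$ for \emph{negative} $q$, together with the identification of $D$ as the asymptotic slope of $\beta$. For negative $q$, boxes of small mass dominate the sum, so I must check that a grid box cannot have much smaller mass than a comparable cylinder it contains. I would handle this with the lower-mass argument of Lemma~\ref{lem:weakreg}, choosing a cylinder of comparable length inside each box.
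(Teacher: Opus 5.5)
Your proof is correct, but it takes a different route from the paper.

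\textbf{How the two routes differ.} For the non-acip direction, the paper quotes the full coarse multifractal formalism for Gibbs measures. It takes $\mathcal{F}_\mu$ to be the Legendre transform of $\beta$, analytic and strictly concave on the closed interval $[d,D]$, with value $1$ at $\alpha_0=-\beta'(0)$. Strict convexity of $\beta$ in the non-acip case then gives $\alpha_0<D=\lim_{q\to-\infty}-\beta'(q)$, hence $\mathcal{F}_\mu(D)<1$. (The paper's ``strictly concave'' for $\beta$ and ``strictly convex'' for $\mathcal{F}_\mu$ have the signs swapped; your conventions are the right ones.)

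You instead prove only the half of the formalism you need: the Chebyshev bound $\mathcal{F}_\mu(D)\le Dq+\beta(q)$ for $q\le 0$, combined with a first-order expansion at $q=0$.

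\textbf{What each route buys.} Your route needs only the Gibbs property and standard pressure facts. It also handles the endpoint $\alpha=D$ directly, whereas the paper's argument needs the Legendre identity for the coarse spectrum to hold all the way up to the endpoint of $[d,D]$.

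You also make explicit a step the paper leaves inside ``general multifractal analysis'': $\beta$ fails to be strictly convex only when $\phi+\alpha_0\log|f'|$ is cohomologous to a constant, and this forces $\phi\sim-\log|f'|$. In that step you really only use $D\ge\sup_q(-\beta'(q))$. This is the easy half of the identification: local dimensions of $\mu$ at points typical for the equilibrium state of $q\phi-\beta(q)\log|f'|$ cannot exceed $D$.

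The paper's route is two lines long and describes the whole spectrum, but it rests entirely on the citation.

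\textbf{The obstacle you flag is milder than you expect.} For $q\le 0$ you only need $\mu(G)\ge\mu(C)$ for some $C\in\mathscr{P}_{c\delta}$ with $C\subset G$, and that is just monotonicity. Distinct boxes contain distinct cylinders. The estimate $\sum_{C\in\mathscr{P}_{c\delta}}\mu(C)^q\asymp_q\delta^{-\beta(q)}$ is the Gibbs property for $q\phi-\beta(q)\log|f'|$. The at most two boundary boxes add only $O(1)$ to $N_\delta(D,r)$.

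\textbf{A minor point in the acip direction.} With the paper's one-sided window $[\delta^{\alpha},\delta^{\alpha-r}]$, the bound $\mu(G)\asymp\delta$ does not by itself give $N_\delta(1,r)\gtrsim\delta^{-1}$, because boxes with $\mu(G)<\delta$ are not counted. One extra line fixes this: for $I=[0,1]$, the BV density is either $\equiv 1$ or exceeds $1$ on a subinterval. The paper's ``clearly'' passes over the same point.
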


\begin{proof}
If $\mu=\mu_\phi$ is an acip, clearly $\mathcal{F}_\mu(D)=1$. If $\mu$ is not an acip, then general multifractal analysis theory of Gibbs measures for Holder continuous potentials (e.g.\  \cite[Theorem 6.2.1]{barreira}) implies that $\mathcal{F}_\mu$ is analytic and strictly convex on $[d,D]$. Moreover, $\mathcal{F}_\mu$ is the Legendre transform of an analytic and strictly concave function $\beta$ which is defined implicitly by
$$P(-\beta(q)\log|f'|+q\phi)=0.$$
In particular, for all $q \in \R$,
$$F_\mu(-\beta'(q))=\beta(q)-q\beta'(q).$$
By standard theory, $F_\mu(-\beta'(0))=1$ and $\lim_{q \to -\infty} -\beta'(q)=D$. Thus strict concavity of $\beta$ implies that $-\beta'(0)<D$. In particular $\mathcal{F}_\mu(D)<1$.
\end{proof}

We now deal with the setting where $\mu$ is an acip, where the construction of $\A_{\delta,\epsilon}$ is fairly straightforward, and then use $\F_\mu$ in the other cases. Given $\rho>0$ we say that $\mathscr{P}_{\rho}$ is a \emph{canonical partition} of $I$ into cylinders of diameter roughly $\rho$, if
$$\mathscr{P}_{\rho}:=\{ \Pi[\i]: |\Pi[\i]| <\rho \leq |\Pi[\i^{-}]|\},$$
where $\i^{-}$ denotes $\i$ with the last symbol removed. Note that since our map has only finitely many uniformly expanding branches, for any $\Pi[\i] \in \mathscr{P}_{\rho}$,
$$|\Pi[\i]| \asymp \rho$$
where the implied constants are independent of both $\rho$ and $\i$.

\begin{prop} Suppose $\mu$ is an acip. For all $\epsilon, \delta \in (0,1)$ we can construct an $(\epsilon,\delta)$-multifractal discretisation of $\mu$ such that
$$\# \A_{\delta,\epsilon}=O\left(\frac{1}{\delta\epsilon}\right)$$
and each $A \in\A_{\delta,\epsilon}$ is made up of a finite union of cylinders.
 \label{acipprop}
\end{prop}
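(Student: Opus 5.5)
The plan is to use that an acip in our setting has density $\rho$ bounded above and below, which makes $\mu$ comparable to Lebesgue. Indeed, $\phi=-\log|f'|$ up to a coboundary, so $m$ is Lebesgue and $d\mu=\rho\,dm$ with $\rho$ strictly positive and bounded. Hence there is $C\ge1$ with $C^{-1}|U|\le\mu(U)\le C|U|$ for every interval $U$. This is also what Lemma~\ref{lem:weakreg} gives with $d=D=1$.

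First fix the canonical partition $\mathscr{P}_{\rho}$ with $\rho=c\,\epsilon\delta$, for a small constant $c$ to be chosen. Every element of $\mathscr{P}_\rho$ is a cylinder of diameter $\asymp\epsilon\delta$, so $\#\mathscr{P}_\rho=O(1/(\epsilon\delta))$. Define $\A_{\delta,\epsilon}$ to consist of all unions of consecutive elements of $\mathscr{P}_\rho$ that are contained in some ball $B(x,\delta)$ and are maximal with that property. More concretely: for each $P\in\mathscr{P}_\rho$, let $A_P$ be the longest run of consecutive cylinders starting at $P$ whose total diameter is at most $2\delta$. This gives at most $\#\mathscr{P}_\rho=O(1/(\epsilon\delta))$ sets, each a finite union of cylinders, and each of diameter at least $2\delta-O(\epsilon\delta)\ge\delta/2$ for $\epsilon$ small. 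For $\epsilon$ bounded below, one simply treats the constant $\epsilon$ case separately by monotonicity in $\epsilon$.

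Next I verify the discretisation property. Given $B=B(x,\delta)\cap I$, let $P$ be the first cylinder of $\mathscr{P}_\rho$ lying entirely inside $B$. The run $A_P$ is then contained in $B$, possibly after shortening the run by one cylinder, and it misses only $O(1)$ cylinders of $B$ at each end. So $B\setminus A_P$ is covered by boundedly many intervals of total length $O(\epsilon\delta)$. By comparability with Lebesgue, $\mu(B\setminus A_P)\le C'\epsilon\delta$, while $\mu(B)\ge C^{-1}\delta$ (balls centred in $I$ meet $I$ in an interval of length at least $\delta$). Hence $\mu(A_P)/\mu(B)\ge 1-CC'c\,\epsilon\ge1-\epsilon$ once $c$ is chosen small.

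The main obstacle is the boundary bookkeeping. I need the index set to be defined so that an $A_P\subset B$ genuinely exists for every ball, including balls near the endpoints of $I$, where $B\cap I$ is shorter. To handle this I would also include runs anchored at the right end of $I$, which adds only $O(1/(\epsilon\delta))$ further sets. I must also check that the constant in $|\Pi[\i]|\asymp\rho$ only affects $c$ and not the count.
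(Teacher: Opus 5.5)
Your construction is essentially the paper's: a canonical partition at scale $\asymp\epsilon\delta$, members that are runs of consecutive cylinders of length close to $2\delta$ counted by their left endpoint, and comparability of $\mu$ with Lebesgue. The paper gets the comparability from Lemma~\ref{lem:weakreg} with $d=D=1$ and uses partition scale $(a/b)\epsilon\delta$.

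There is, however, a concrete slip in how you define the family. One run $A_P$ per starting cylinder, of maximal length $\le 2\delta$, does not work. The left end of the first cylinder $P$ inside $B(x,\delta)$ can lie anywhere in $[x-\delta,x-\delta+\rho)$, while $|A_P|>2\delta-\rho$, so $A_P$ typically sticks out past $x+\delta$. The ``shortened'' run you then use is not a member of $\A_{\delta,\epsilon}$. For instance, for the doubling map with Lebesgue measure and $\delta=2^{-m}$, every $A_P$ away from the right end has length exactly $2\delta$, so essentially no ball of radius $\delta$ contains any member of your family.

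Removing one cylinder also need not suffice. The overshoot is less than $\rho$, but cylinders of $\mathscr{P}_\rho$ can be shorter than $\rho$ by a bounded factor, so up to $O(1)$ of them may have to go.

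The fix is exactly the paper's definition: admit every run whose length lies in $[2\delta-2\rho,2\delta]$. Since cylinders have length $\asymp\rho$, each starting cylinder carries only $O(1)$ such runs, so the count stays $O(1/(\epsilon\delta))$. The largest union of cylinders contained in $B$ is then itself a member. Alternatively, keep one run per $P$ but cap its length at $2\delta-\rho$ instead of $2\delta$.

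Your concern about the endpoints of $I$ is justified, and the paper's proof in fact overlooks it. For $x$ close to $\partial I$, $B(x,\delta)\cap I$ is shorter than the paper's minimum length $2\delta-2(a/b)\epsilon\delta$, so no member of the paper's family fits inside it.

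With your left-anchored runs, though, the right end of $I$ takes care of itself, since $A_P$ simply stops at the endpoint. It is the \emph{left} end that needs extra sets: runs starting at the leftmost cylinder with every length between roughly $\delta$ and $2\delta$. This adds only $O(1/\epsilon)$ members. You must also discard the short runs near the right end of $I$ of diameter below $\delta/2$, as Definition~\ref{mfd} requires that lower bound.

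A minor inaccuracy: $\phi$ cohomologous to $-\log|f'|$ makes $m$ equivalent to Lebesgue with density bounded above and below, not equal to it. Comparability is all you use, though.
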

 Observe that this implies Proposition \ref{prop:mftoolintro} in the setting where $\mu$ is an acip.

\begin{proof}
Since $\mu$ is an acip, Lemma \ref{lem:weakreg} implies that there exist constants $a<b$ such that for all $\delta>0$ and $x \in I$, 
\begin{equation} \label{acip}
a\delta \leq \mu(B(x,\delta)) \leq b\delta.
\end{equation} 

Choose $\rho=a/b$. For each $\delta, \epsilon \in (0,1)$ consider the canonical partition $\mathscr{P}_{\rho\epsilon \delta}$ of $I$  into cylinders of roughly diameter $\rho\epsilon\delta$.

Define $\A_{\delta,\epsilon}$ to consist of all intervals $J$ which can be written as a union of cylinders from $\mathscr{P}_{\rho \epsilon \delta}$ and which have length $|J| \in [2\delta-2\delta \rho\epsilon, 2\delta]$. 

To see this is an $(\epsilon,\delta)$-multifractal discretisation of $\mu$, consider a ball $B(x,\delta)$ and let $U$ be the largest union of cylinders in $\mathscr{P}_{\rho \epsilon \delta}$ that is contained in $B(x,\delta)$. This implies $|U|>2\delta-2\rho \epsilon\delta$, hence $U=A$ for some $A \in \A_{\delta,\epsilon}$. Moreover,
$$\frac{\mu(A)}{\mu(B(x,\delta))}=1-\frac{\mu(B(x,\delta)\setminus A)}{\mu(B(x,\delta))} \geq 1- \frac{b\rho \epsilon}a=1-\epsilon$$
since $\mu(B(x,\delta)) \geq a\delta$ by \eqref{acip} and similarly $\mu(B(x,\delta)\setminus A) \leq b\rho \epsilon\delta$ since $B(x,\delta)\setminus A$ is made up of two intervals, each of which has diameter at most $\rho \epsilon \delta$.

To verify $\# \A_{\delta,\epsilon}=O\left(\frac{1}{\delta\epsilon}\right)$, note that for each left end point of an interval in $\mathscr{P}_{\rho \epsilon \delta}$, there are $O(1)$ elements in $\A_{\delta,\epsilon}$ that have this as their left end point (this is because we have a uniform upper bound on the derivative $f'$ of our map). Therefore since $\#\mathscr{P}_{\rho \epsilon \delta}=O(1/\epsilon\delta)$ the result follows.

Finally, this proves Proposition \ref{prop:mftoolintro} in the setting where $\mu$ is an acip because for $p\ge 1/a$, setting $\mathbf{C}_\delta=p\delta^{-d}\log(1/\delta)$ we get
$$\sum_{A \in \A_{\delta,\epsilon}} e^{-\mathbf{C}_\delta \mu(A)} \leq \#\A_{\delta,\epsilon} \cdot \delta=O(1/\epsilon).$$
\end{proof}

Next we turn to the more general setting where $\mu$ is a multifractal measure. 
Here the construction of $\A_{\delta,\epsilon}$ is more delicate. The difficulty is
that $\mu$ may have ``spikes'': very small intervals that carry comparatively
large mass. In such regions, the measure of a ball $B(x,\delta)$ is no longer
well-approximated by a simple power of $\delta$, and the local mass distribution
can vary by several orders of magnitude as $x$ varies.

To see why this causes an issue, let us demonstrate what happens if we apply a similar strategy to the one above. Namely, let us try to partition $I$ into a canonical partition $\mathscr{P}_{\rho(\epsilon,\delta)}$ which is fine enough that any cylinder in $\mathscr{P}_{\rho(\epsilon,\delta)}$ takes up at most $\epsilon$ of any $\delta$-ball which contains it. The information given by the endpoints of the multifractal spectrum $\F_\mu(\alpha)$ tell us that we require
$$\frac{\rho(\epsilon,\delta)^{d}}{\delta^{D}}<\epsilon$$
which can be achieved provided $\rho(\epsilon,\delta)<\epsilon^{\frac{1}{d}}\delta^{\frac{D}{d}}$. But applying this directly gives, again with $p\ge 1/a$, 
\begin{equation}\sum_{A \in \A_{\delta,\epsilon}} e^{-\mathbf{C}_\delta \mu(A)} \leq \#\A_{\delta,\epsilon} \cdot \delta=O(\epsilon^{-\frac{1}{d}}\delta^{1-\frac{D}{d}})\label{rhoprob}\end{equation}
which introduces a scale-dependent error to our estimates, whenever $d<D$.

Therefore more subtle arguments are required to construct $\A_{\delta,\epsilon}$. Roughly speaking, by repeating the general strategy above, a multifractal discretisation $\A_{\delta,\epsilon}^1$ will be constructed only on parts of the space where the measure of balls at scale $\delta$ are exponentially larger that $\delta^{D}$. The exponential gain $\mu(A) \gg\delta^{D}$ for $A \in\A_{\delta,\epsilon}^1$ will compensate for the problematically large cardinality $\#\A_{\delta,\epsilon}^1$, restoring uniform boundedness over $\delta>0$ in \eqref{rhoprob}. However, in parts of the space where the measure of balls at scale $\delta$ is closer to the global minimum $\delta^{D}$, it will be possible to construct a multifractal discretisation $\A_{\delta,\epsilon}^2$ of much smaller cardinality, since the multifractal analysis of $\mu$ implies that this part of the space is a lot smaller and the measure must necessarily be more evenly distributed there.

\begin{prop}\label{mainmf} Let $\mu$ be any measure which is not an acip.  Letting  $\mathbf{C}_\delta= p\delta^{-D}\log(1/\delta)$ for an appropriate choice of $p$, the following holds. 
For all sufficiently small $\epsilon, \delta \in (0,1)$ and sufficiently small
$$0<r<\frac{d}{2}(1-\F_\mu(D))$$
we can construct an $(\epsilon,\delta)$-multifractal discretisation of $\mu$ such that  $\A_{\delta,\epsilon}=\A_{\delta,\epsilon}^1\cup \A_{\delta,\epsilon}^2$ and 
\begin{align*}
\sum_{A \in \A^1_{\delta,\epsilon}} e^{-\mathbf{C}_\delta \mu(A)}& \leq \epsilon^{-\frac{1}{d}}\delta^{-\frac{D}{d}}\delta^{\delta^{-r}/2}, \\
\sum_{A \in \A^2_{\delta,\epsilon}} e^{-\mathbf{C}_\delta \mu(A)} &\leq\epsilon^{-\frac{1}{d}} \delta^{\frac{1}{2}-\frac{r}{d}-\frac{\F_\mu(D)}{2}}.
\end{align*}
Moreover each $A \in\A_{\delta,\epsilon}$ is made up of a finite union of cylinders.
\end{prop}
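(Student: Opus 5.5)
We split the $\delta$-balls into two classes according to their mass: \emph{heavy} balls with $\mu(B(x,\delta))\ge \delta^{D-r}$ and \emph{light} balls with $\mu(B(x,\delta))<\delta^{D-r}$. We build $\A^1_{\delta,\epsilon}$ to serve the heavy balls and $\A^2_{\delta,\epsilon}$ to serve the light ones. Each family is built as in Proposition~\ref{acipprop}: it consists of all unions of cylinders from a canonical partition $\mathscr{P}_\rho$ of suitable length. The one change is that the fineness $\rho$ is chosen so that the two boundary pieces of a ball (each of diameter $\lesssim\rho$, hence of mass $\le b\rho^d$ by Lemma~\ref{lem:weakreg}) carry at most an $\epsilon$-fraction of the ball's mass.

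\textbf{The family $\A^1$.} For heavy balls we use the crude choice $\rho_1\asymp \epsilon^{1/d}\delta^{D/d}$ discussed before \eqref{rhoprob}. This gives $b\rho_1^d\le\epsilon a\delta^D\le \epsilon\mu(B)$ for every ball. We keep only those unions $A$ that are of length $\approx 2\delta$ and lie inside some heavy ball with $\mu(A)\ge(1-\epsilon)\mu(B)\ge \delta^{D-r}/2$. Then $\#\A^1\lesssim \rho_1^{-1}\lesssim\epsilon^{-1/d}\delta^{-D/d}$, and each term is bounded by
$$e^{-\mathbf{C}_\delta\mu(A)}\le \exp\left(-\tfrac{p}{2}\delta^{-r}\log(1/\delta)\right)=\delta^{p\delta^{-r}/2}.$$
With $p\ge1$ this yields the first bound, possibly after absorbing constants by taking $\delta$ small.

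\textbf{The family $\A^2$.} Here we exploit that light balls are rare and have uniformly controlled mass. Take a light ball $B(x,\delta)$. By the quasi-Bernoulli property and Lemma~\ref{lem:weakreg} applied at the cylinder level, every subinterval $U\subset B$ with $|U|=\theta\delta$ should satisfy
$$\mu(U)\lesssim \theta^{d}\mu(B)\cdot(\text{ratio controlled by }\delta^{-r}).$$
Concretely, I would write $U$ as a concatenation $\Pi[\i\j]$ with $\Pi[\i]$ of size $\asymp\delta$. Then $\mu(U)\asymp\mu(\Pi[\i])\mu(\Pi[\j])\le b\,\mu(\Pi[\i])\theta^{d}$, and $\mu(\Pi[\i])\lesssim\mu(B)$ up to bounded overlaps. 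This allows the relative choice $\rho_2\asymp\epsilon^{1/d}\delta$ near light balls, so that the partition is fine only relative to $\delta$. We include in $\A^2$ only the unions lying in light balls. Their starting points then lie in $O(1)$ grid boxes adjacent to boxes $G\in G_\delta$ with $\mu(G)\lesssim\delta^{D-r}$; by the definition of $\F_\mu$ (and upper semicontinuity near the endpoint $D$) there are at most $\delta^{-\F_\mu(D)-r'}$ such boxes. Hence
$$\#\A^2\lesssim \epsilon^{-1/d}\delta^{-\F_\mu(D)-r'},$$
and each term satisfies $e^{-\mathbf{C}_\delta\mu(A)}\le\delta^{pa'}$ by Lemma~\ref{lem:weakreg}. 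Choosing $p$ large, the resulting power $\delta^{pa'-\F_\mu(D)-r'}$ is at most $\delta^{\frac12-\frac rd-\frac{\F_\mu(D)}2}$; the constraint $r<\frac d2(1-\F_\mu(D))$ guarantees that this exponent is positive. Finally, every ball is either heavy or light, so $\A^1\cup\A^2$ is an $(\epsilon,\delta)$-multifractal discretisation consisting of finite unions of cylinders.

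\textbf{Main obstacle.} The hard part is the $\A^2$ step: getting the relative regularity $\mu(U)\lesssim\theta^d\mu(B)$ inside light balls with only a $\delta^{\pm r}$ loss, and counting light boxes uniformly in $\delta$ rather than only asymptotically. I would first try to use the quasi-Bernoulli property to turn the limsup defining $\F_\mu(D)$ into a uniform counting bound, via submultiplicativity of the number of light cylinders. If this uniformity fails, I would redefine the threshold so that the losses are absorbed into the slack $r$.
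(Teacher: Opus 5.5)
Your family $\A^1$ is essentially the paper's. The gap is in $\A^2$. With mesh $\rho_2\asymp\epsilon^{1/d}\delta$ the family you build is \emph{not} an $(\epsilon,\delta)$-multifractal discretisation for light balls, so the closing sentence (``every ball is either heavy or light'') does not finish the argument. What matters are the mesh cylinders $C=\Pi[\i\j]$ that straddle the two endpoints of $B$, not subintervals $U\subset B$. The $\delta$-scale ancestor $\Pi[\i]$ of such a $C$ is not contained in $B$, and two things go wrong.

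\textbf{The $\delta^{-r}$ loss.} Even when $\Pi[\i]$ is light, you only know $\mu(\Pi[\i])<\delta^{D-r}$, while $\mu(B)$ can be as small as $a\delta^D$. So you only get $\mu(C)\lesssim\epsilon\delta^{-r}\mu(B)$. This loss sits in the approximation ratio, which must be at most $\epsilon$ uniformly in $\delta$, so it cannot be absorbed into the slack $r$; that slack only helps the counting. This is why the paper refines at relative scale $\kappa(\epsilon,\delta)=\epsilon^{1/d}\delta^{r/d}$, which gives boundary pieces of mass $\lesssim\delta^{D-r}\kappa^d=\epsilon\delta^D$.

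\textbf{Spikes.} More seriously, the claim $\mu(\Pi[\i])\lesssim\mu(B)$ ``up to bounded overlaps'' is false. Gibbs measures are not doubling, and an endpoint of a light ball can lie in a heavy $\delta/2$-cylinder $U$. Then $B\cap U$ is a short interval hugging the side of $U$ that faces a light neighbour. It can carry a fixed fraction of $\mu(B)$ while lying inside a single $\rho_2$-subcylinder of $U$, so no union of $\rho_2$-cylinders inside $B$ captures it. This already happens for non-uniform Bernoulli measures on the doubling map. The paper handles these spikes with the words $\mathbf{k}\in\mathcal{Q}^r_\delta$. One has $B\cap U\subset\Pi[\mathbf{k}^-]$, quasi-Bernoulli gives $\mu(\Pi[\mathbf{k}^-])\lesssim\delta^{D-r}$, and $\Pi[\mathbf{k}^-]$ is then also refined at relative scale $\kappa$.

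Your own device repairs this cheaply. Since you take $p$ large anyway, $\#\A^2$ only needs to be polynomial in $1/\delta$. Every admissible $A$ is an interval of length at least $\delta$, so $\mu(A)\ge a\delta^D$ and $e^{-\mathbf{C}_\delta\mu(A)}\le\delta^{pa}$. So build everything from the fine mesh $\rho_1$: take all $\rho_1$-unions with length in $[2\delta-2\rho_1,2\delta]$, and split them according to whether $\mu(A)\ge\delta^{D-r}/2$. Your boundary estimate $b\rho_1^d\le\epsilon a\delta^D\le\epsilon\mu(B)$ uses nothing about heaviness, so after adjusting constants it holds for every ball. Then $\sum_{A\in\A^2}e^{-\mathbf{C}_\delta\mu(A)}\lesssim\epsilon^{-1/d}\delta^{pa-D/d}$. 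This is below $\epsilon^{-1/d}\delta^{\frac12-\frac rd-\frac{\F_\mu(D)}2}$ for small $\delta$ once $pa\ge D/d+1$.

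This bypasses both the spike refinement and the light-cylinder count via $\F_\mu(D)$. So the step you single out as the main obstacle is not actually needed; the real difficulty on your route is the approximation property. Since $\mathbf{C}_\delta$ enters the proof of Theorem~\ref{thm:main} only linearly, enlarging $p$ changes $K_\epsilon$ only by a $\mu$-dependent factor. The simplification is therefore compatible with how the proposition is used.
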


\begin{proof}
By Lemma \ref{mfacip} we know that the multifractal spectrum $\F_\mu(D)<1$. First we describe how to construct $ \A^1_{\delta,\epsilon}$. For now, $r>0$ can be arbitrary. Let us denote 
$$\rho(\epsilon,\delta):=\epsilon^{\frac{1}{d}}\delta^{\frac{D}{d}}$$ and let $\mathscr{P}_{\rho(\epsilon,\delta)}$ denote the canonical partition. Let $\mathcal{B}_{\rho(\epsilon,\delta)}$ consist of all intervals $J$ which can be written as a union of cylinders from $\mathscr{P}_{\rho(\epsilon,\delta)}$ and which have length $|J| \in [2\delta-2\rho(\epsilon,\delta), 2\delta]$, i.e., 
$$\mathcal{B}_{\rho(\epsilon,\delta)}:= \left\{ J \text{ an interval}: J= C_1\cup\cdots \cup C_k \text{ and } C_i\in \mathscr{P}_{\rho(\epsilon,\delta)} \text{ for } i=1, \ldots, k\right\}.$$
 We define
$$\A^1_{\delta,\epsilon,r}:=\{J \in \mathcal{B}_{\rho(\epsilon,\delta)}: \mu(J) \geq \delta^{D-r}/2\}.$$
Then 
$$\#\A^1_{\delta,\epsilon,r} \lesssim \rho(\epsilon,\delta)^{-1}=\epsilon^{-\frac{1}{d}}\delta^{-\frac{D}{d}}$$
and for any $A \in \A^1_{\delta,\epsilon,r}$ we have $\mu(A)\delta^{-D} \geq \delta^{-r}/2$ therefore for $p \geq 1$,
$$\sum_{A \in \A^1_{\delta,\epsilon,r}} e^{-\mathbf{C}_\delta \mu(A)} \leq \epsilon^{-\frac{1}{d}}\delta^{-\frac{D}{d}}\delta^{\delta^{-r}/2}.$$
This demonstrates the first inequality in Proposition \ref{mainmf}, when we later define $\A^1_{\delta,\epsilon}$ by $\A^1_{\delta,\epsilon}=\A^1_{\delta,\epsilon,r}$ for an appropriate choice of $r>0$. 

Next, we describe how to construct $ \A^2_{\delta,\epsilon}$. Since $\F_\mu(D)<1$ we can fix 
$$0<r<d\left(\frac{1}{2}-\frac{\F_\mu(D)}{2}\right)$$
sufficiently small such that 
$$\#\mathscr{P}_{\delta/2}^r \leq \delta^{-\frac{1}{2}-\frac{\F_\mu(D)}{2}}$$
 where
$$\mathscr{P}_{\delta/2}^r:=\left\{C \in \mathscr{P}_{\delta/2}: \mu(C) < \delta^{D-r}\right\}$$
and $\mathscr{P}_{\delta/2}$ is the canonical partition. Note that since $\F_\mu(D)<1$, the exponent in the upper estimate on $\#\mathscr{P}_{\delta/2}^r$ satisfies the bounds $\F_\mu(D)< \frac{1}{2}+\frac{\F_\mu(D)}{2}<1.$

Consider an interval $U \in\mathscr{P}_{\delta/2} \setminus\mathscr{P}_{\delta/2}^r$ which neighbours an interval $V \in\mathscr{P}_{\delta/2}^r$. Let $\mathbf{k}$ be the shortest word for which $\Pi[\mathbf{k}] \subset U$, the interval $\Pi[\mathbf{k}]$  neighbours $V$ and  $\mu(\Pi[\mathbf{k}])<\delta^{D-r}$, see Figure \ref{fig:k} below. Let $\mathcal{Q}_\delta^r$ consist of all $\mathbf{k}\in \Sigma^*$ which can be obtained in this way for some $U \in\mathscr{P}_{\delta/2} \setminus\mathscr{P}_{\delta/2}^r$ and $V \in\mathscr{P}_{\delta/2}^r$. 

\begin{figure}[h]
\begin{tikzpicture}[thick, scale=0.5]
\draw[ - ] (0,2) -- (20,2);
\draw[ - ] (2,1.7) -- (2, 2);
\draw[ - ] (3,1.7) -- (3, 2);
\draw[ - ] (5,1.7) -- (5, 2);
\draw[ - ] (8,1.7) -- (8, 2);
\draw[ - ] (12,1.7) -- (12, 2);
\draw[ - ] (13.5,1.7) -- (13.5, 2);
\draw[ - ] (15.5,1.7) -- (15.5, 2);
\draw[ - ] (19,1.7) -- (19, 2);
\draw[ - ] (0.2,1.7) -- (0.2, 2);

\draw[thin, blue, - ] (2,2.5) -- (3, 2.5);
\draw[ thin, blue, - ] (8,2.5) -- (12, 2.5);
\draw[thin,  blue, - ] (13.5,2.5) -- (15.5, 2.5);
\draw[thick, red, - ] (6.7,2.5) -- (8, 2.5);

\draw (7,1.5) node[below] {{\small $U$}};
\draw (10,1.5) node[below] {{\small $V$}};

\draw (7.2,2.5) node[above] {{\small $\Pi[\k]$}};

\draw (17,1.2) node[below] {{\small $\mathscr{P}_{\delta/2}$}};

\draw (10,5) node[above] {{\small $\mathscr{P}_{\delta/2}^r$}};

\draw[blue, ->] (9.5, 5)--(3, 2.9);

\draw[blue, ->] (10, 5)--(10, 2.9);

\draw[blue, ->] (10.5, 5)--(14.5, 2.9);
\end{tikzpicture}
  \caption{The long black line here is a segment of $[0, 1]$ and the main partition of this segment represents $\mathscr{P}_{\delta/2}$, and the thin blue intervals belong to $\mathscr{P}_{\delta/2}^r$. For the indicated neighbouring intervals $U\in \mathscr{P}_{\delta/2}\sm \mathscr{P}_{\delta/2}^r$ and $V\in \mathscr{P}_{\delta/2}^r$, the thick red interval $\Pi[\k]$ is chosen as the cylinder of shortest symbolic length contained in $U$, adjacent to $V$ and whose measure is at most $\delta^{D-r}$.}
    \label{fig:k}
\end{figure}

Define 
$$\kappa(\epsilon,\delta):=\epsilon^{\frac{1}{d}}\delta^{\frac{r}{d}}$$
and let $\mathscr{P}_{\kappa(\epsilon,\delta)}$ be the canonical partition.

Now define $\A^2_{\delta,\epsilon}$ to be all intervals $J$ which can be written as a union of cylinders from  
\begin{equation}\label{weirdset}
 \left\{\Pi[\i\j]: \Pi[\i] \in \mathscr{P}_{\delta/2}^r, \;\Pi[\j ]\in \mathscr{P}_{\kappa(\epsilon,\delta)}\right\} \cup \left\{\Pi[\k^-\j]: \mathbf{k} \in \mathcal{Q}_{\delta}^r,\; \Pi[\j] \in \mathscr{P}_{\kappa(\epsilon,\delta)}\right\}
\end{equation}
and which have length $|J| \in [2\delta-2\rho(\epsilon,\delta), 2\delta]$. Since $\#\mathscr{P}_{\delta/2}^r \leq\delta^{-\frac{1}{2}-\frac{\F_\mu(D)}{2}}$,
$$\#\A^2_{\delta,\epsilon} \lesssim\delta^{-\frac{1}{2}-\frac{\F_\mu(D)}{2}}\kappa(\epsilon,\delta)^{-1}=\epsilon^{-\frac{1}{d}} \delta^{-\frac{r}{d}-\frac{1}{2}-\frac{\F_\mu(D)}{2}}.$$
In particular, for $p\ge 1/a$, 
$$\sum_{A \in \A^2_{\delta,\epsilon}} e^{-\mathbf{C}_\delta \mu(A)} \leq\delta \cdot\epsilon^{-\frac{1}{d}} \delta^{-\frac{r}{d}-\frac{1}{2}-\frac{\F_\mu(D)}{2}}=\epsilon^{-\frac{1}{d}} \delta^{\frac{1}{2}-\frac{r}{d}-\frac{\F_\mu(D)}{2}}.$$

We define $\A^1_{\delta,\epsilon}=\A^1_{\delta,\epsilon,r}$ for the choice of $r$ which was fixed in the construction of $\A^2_{\delta,\epsilon}$ and set $\A_{\delta,\epsilon}=\A^1_{\delta,\epsilon} \cup \A^2_{\delta,\epsilon}$. It remains to show that this is a multifractal discretisation of $\mu$.

To see this, let us fix an arbitrary ball $B=B(x,r)$ and assume $\epsilon<1/2$. If $\mu(B)>\delta^{D-r}$, then the best approximation of $B$ by $A \in \mathcal{B}_{\rho(\epsilon,\delta)}$ with $B \subset A$ will have measure
$$\mu(A) \geq \mu(B)-\rho(\epsilon,\delta)^{d}>\delta^{D-r}-\epsilon \delta^{D} \ge \delta^{D-r}/2,$$ which implies $A \in \A^1_{\delta,\epsilon}=\A^1_{\delta,\epsilon,r}$. Moreover, $B \setminus A$ is contained in at most two intervals from $\mathscr{P}_{\rho(\epsilon,\delta)}$, hence
$$\frac{\mu(B \setminus A)}{\mu(B)} \lesssim\frac{\rho(\epsilon,\delta)^{d}}{\delta^{D}}=\epsilon.$$

On the other hand, if $\mu(B)\leq\delta^{D-r}$ then $B$ necessarily contains some interval $C \in \mathscr{P}_{\delta/2}^r$ and moreover, $B \cap(\mathscr{P}_{\delta/2}\setminus \mathscr{P}_{\delta/2}^r) \subset \bigcup_{\k \in\mathcal{Q}_\delta^r} \Pi[\k]$. Hence we can find $A \in  \A^2_{\delta,\epsilon}$ such that $A \subset B$. Then $B \setminus A$ will be contained in at most two intervals from \eqref{weirdset}, and applying the quasi-Bernoulli property to any set in \eqref{weirdset} yields 
$$\mu(B \setminus A) \lesssim \delta^{D-r} \cdot \kappa(\epsilon,\delta)^{d}=\epsilon\delta^{D}.$$
Therefore in this case we also have
$$\frac{\mu(B \setminus A)}{\mu(B)} \lesssim\frac{\epsilon\delta^{D}}{\delta^{D}}=\epsilon.$$

\end{proof}

We are now ready to prove Proposition \ref{prop:mftoolintro} in the general setting.

\begin{proof}[Proof of Proposition \ref{prop:mftoolintro}]
If $\mu$ is an acip, then the proof follows from Proposition \ref{acipprop}. Suppose $\mu$ is not an acip. Then by Proposition \ref{mainmf}, for all sufficiently small $\epsilon, \delta \in (0,1)$ and sufficiently small
$$0<r<\frac{d}{2}(1-\F_\mu(D))$$
we can construct an $(\epsilon,\delta)$-multifractal discretisation of $\mu$ such that  $\A_{\delta,\epsilon}=\A_{\delta,\epsilon}^1\cup \A_{\delta,\epsilon}^2$ and 
\begin{align*}
\sum_{A \in \A^1_{\delta,\epsilon}} e^{-\mathbf{C}_\delta \mu(A)}& \leq \epsilon^{-\frac{1}{d}}\delta^{-\frac{D}{d}}\delta^{\delta^{-r}/2}\\
\sum_{A \in \A^2_{\delta,\epsilon}} e^{-\mathbf{C}_\delta \mu(A)} &\leq\epsilon^{-\frac{1}{d}} \delta^{\frac{1}{2}-\frac{r}{d}-\frac{\F_\mu(D)}{2}}.
\end{align*}
In particular
$$\sum_{A \in \A_{\delta,\epsilon}} e^{-\mathbf{C}_\delta \mu(A)} \leq \sum_{A \in \A^1_{\delta,\epsilon}} e^{-\mathbf{C}_\delta \mu(A)}+\sum_{A \in \A^2_{\delta,\epsilon}} e^{-\mathbf{C}_\delta \mu(A)} \leq\epsilon^{-\frac{1}{d}}\delta^{-\frac{D}{d}}\delta^{\delta^{-r}/2}+\epsilon^{-\frac{1}{d}} \delta^{\frac{1}{2}-\frac{r}{d}-\frac{\F_\mu(D)}{2}}.$$
This can be easily seen to be of order $O(\epsilon^{-1/d})$ since $\frac{1}{2}-\frac{r}{d}-\frac{\F_\mu(D)}{2}>0$.
\end{proof}

\end{document}